\documentclass{amsart}

\usepackage[english]{babel}
\usepackage[cp1250]{inputenc}
\usepackage{amssymb,amsthm,amsfonts}
\usepackage{mathrsfs}
\usepackage{verbatim}
\usepackage{url}

\newcommand{\RR}{\mathbb R}
\newcommand{\II}{\mathbb I}

\newcommand{\NN}{\mathbb N}

\newcommand{\CC}{\mathbb C}

\newcommand{\scrK}{\mathscr K}

\newcommand{\cW}{W}
\newcommand{\cF}{\mathcal F}
\newcommand{\cB}{\mathcal B}
\newcommand{\cD}{\mathcal D}
\newcommand{\cU}{\mathcal U}

\newcommand{\cI}{\mathcal I}
\newcommand{\cL}{\mathcal L}

\newcommand{\cK}{\mathcal K}

\newcommand{\cH}{\mathcal H}

\newcommand{\cA}{\mathcal A}

\newcommand{\cN}{\mathcal N}

\newcommand{\im}{\mathrm{Im}}
\newcommand{\re}{\mathrm{Re}}
\newcommand{\benu}{\begin{enumerate}}
\newcommand{\eenu}{\end{enumerate}}
\newcommand{\bop}{\begin{opomba}}
\newcommand{\eop}{\end{opomba}}

\newcommand{\Bor}{\mathrm{Bor}}

\newcommand{\B}{\mathrm{Bor}}

\newcommand{\Id}{\mathrm{Id}}
\newcommand{\id}{\mathrm{Id}}

\newcommand{\supp}{\mathrm{supp}}

\newcommand{\beqn}{\begin{align*}}
\newcommand{\eeqn}{\end{align*}}

\newcommand{\bdefi}{\begin{definition}}
\newcommand{\edefi}{\end{definition}}
\newcommand{\bcor}{\begin{corollary}}
\newcommand{\ecor}{\end{corollary}}
\newcommand{\bthe}{\begin{theorem}}
\newcommand{\ethe}{\end{theorem}}
\newcommand{\bpro}{\begin{proposition}}
\newcommand{\epro}{\end{proposition}}
\newcommand{\blem}{\begin{lemma}}
\newcommand{\elem}{\end{lemma}}
\newcommand{\brem}{\begin{remark}}
\newcommand{\erem}{\end{remark}}
\newcommand{\bequ}{\begin{equation}}
\newcommand{\eequ}{\end{equation}}
\newcommand{\bprf}{\begin{proof}}
\newcommand{\eprf}{\end{proof}}

%
%
%
\newtheorem{theorem}{Theorem}[section]
\newtheorem{corollary}[theorem]{Corollary}
\newtheorem{lemma}[theorem]{Lemma}
\newtheorem{proposition}[theorem]{Proposition}
\newtheorem*{problem*}{Problem}

\theoremstyle{definition}
\newtheorem{definition}[theorem]{Definition}
\newtheorem{remark}[theorem]{Remark}

\begin{document}

%
%
%
%
%
%
%
%
%

\title[Integral Representations of $\ast$-Rep\-res\-entations of $\ast$-algebras]{Integral Representations of $\ast$-Rep\-res\-entations of $\ast$-algebras}


\author{Alja\v z Zalar}

\address{%
University of Ljubljana\\
Faculty of Math.~and Phys., Dept.~of Math.\\
Jadranska 19\\
SI-1000 Ljubljana, Slovenia}

\email{aljaz.zalar@imfm.s}


\keywords{
	$\ast$-representations, $\ast$-algebras,  operator-valued measures}

\date{\today}

\begin{abstract} 
	Regular normalized $B(\cW_1,\cW_2)$-valued non-negative spectral measures introduced in 
	\cite{Zalar2014}
	are in one-to-one correspondence with unital $\ast$-rep\-re\-sen\-tations 
	$\rho:C(X,\CC)\otimes \cW_1 \rightarrow \cW_2$, 
	where $X$ stands for a compact Hausdorff space and $\cW_1, \cW_2$ stand for von Neumann algebras. 
	In this	paper we generalize this result in two directions.
	The first is to $\ast$-representations of the form
	$\rho:\cB\otimes \cW_1\rightarrow \cW_2$, where $\cB$ stands for a commutative 
	$\ast$-algebra $\cB$, and the second is to
	special (not necessarily bounded) $\ast$-representations of the form
	$\rho:\cB\otimes \cW_1\rightarrow \cL^+(\cD)$, 
	where $\cL^+(\cD)$ stands for a $\ast$-algebra of special 
	linear operators on a dense subspace $\cD$ of a Hilbert space $\cK$.
\end{abstract}

\maketitle
\section{Introduction and Notatiton}
 
	A \textsl{$\ast$-representation} of a $C^{\ast}$-algebra $\cA$ is an algebra homomorphism 
$\rho:\cA \rightarrow \cW$ such that $\rho(a^\ast) = \rho(a)^{\ast}$, where $\cW$ stands for a 
von Neumann algebra.
A version of a well-known 
representation theorem is the following (see \cite[p.\ 259]{Con} or \cite[5.2.6. Theorem]{Kad-Rin} and note that the $C^\ast$-algebra $B(\cH)$ of bounded linear operators on a Hilbert space $\cH$ can be replaced by a von Neumann algebra $\cW$ by \cite[Theorem 2.7.4]{Pet}).

\begin{theorem} \label{komutativne-C-star-algebre}
	Let $X$ be a compact Hausdorff space, $\cW$ a von Neumann algebra and 
	$\rho: C(X,\CC) \rightarrow \cW$ a linear map. Let $\B(X)$ be a Borel $\sigma$-algebra on $X$.
	The following statements are equivalent.
		\begin{enumerate}
			\item $\rho: C(X,\CC) \rightarrow \cW$ is a unital $\ast$-representation.
			\item There exists a unique regular normalized spectral measure 
				$E : \B(X) \rightarrow \cW$ such that $\rho(f) =	\int_X f\; dE$ for every $f \in C(X,\CC)$.
		\end{enumerate}
\end{theorem}

The motivation for our previous paper \cite{Zalar2014} was a generalization of Theorem
\ref{komutativne-C-star-algebre} to certain non-commutative $C^{\ast}$-algebras. Namely, we 
found a generalization to the $\ast$-representations of the form $\rho: C(X,\cW_1) \rightarrow \cW_2$, where $\cW_1$, $\cW_2$ stand for von Neumann algebras. For the generalization we introduced non-negative spectral measures. \textsl{Non-negative spectral measure} 
$M \colon \Bor(X) \to B(\cW_1,\cW_2)$ is a set function, such that for every hermitian projection $P\in \cW_1$ the set functions 
	$M_P:\Bor(X)\to \cW_2, M_P(\Delta):=M(\Delta)(P)$
are spectral measures and the equality 
	$M_P(\Delta_1)M_Q(\Delta_2)=M_{PQ}(\Delta_1\cap \Delta_2)$
holds for all hermitian projections $P,Q\in \cW_1$ and all sets $\Delta_1, \Delta_2\in \Bor(X)$.
The generalization is the following.

\begin{theorem} \label{reprezentacijski-izrek-2}
	Let $X$ be a compact Hausdorff space, $\B(X)$ a Borel $\sigma$-algebra on $X$, $\cW_1$, $\cW_2$ von 	
	Neumann algebras and 
		$\rho: C(X,\cW_1) \to \cW_2$
	a map.
	The following statements are equivalent.
	\benu
		\item \label{prva-tocka} $\rho: C(X,\cW_1) \to \cW_2$ is a unital
			$\ast$-representation. 
		\item	\label{druga-tocka} There exists a unique regular normalized non-negative spectral measure 
			$M:\B(X)\to B(\cW_1,\cW_2)$ such that
				$\rho(F)=\int_X F\; dM$
			for every $F\in C(X,\cW_1)$.
	\eenu
\end{theorem}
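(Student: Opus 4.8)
The plan is to prove the two implications separately, the direction \eqref{druga-tocka}$\Rightarrow$\eqref{prva-tocka} being the routine one. Given a regular normalized non-negative spectral measure $M$, I would define $\rho(F):=\int_X F\,dM$ and check the representation axioms. Linearity, the identity $\rho(F^\ast)=\rho(F)^\ast$, and unitality $\rho(1\otimes I)=M_I(X)=I$ are immediate from the definition of the integral and the normalization. The only substantial point is multiplicativity $\rho(FG)=\rho(F)\rho(G)$, which I would verify first on elementary tensors $F=f\otimes P$, $G=g\otimes Q$ with $P,Q$ hermitian projections: here $\rho(F)\rho(G)=\left(\int_X f\,dM_P\right)\left(\int_X g\,dM_Q\right)$, and expanding the product of spectral integrals and collapsing it by the compatibility relation $M_P(\Delta_1)M_Q(\Delta_2)=M_{PQ}(\Delta_1\cap\Delta_2)$ yields $\int_X fg\,dM_{PQ}=\rho(fg\otimes PQ)=\rho(FG)$. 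The general case then follows from the density of $C(X,\CC)\otimes\cW_1$ in $C(X,\cW_1)$ and the boundedness of $F\mapsto\int_X F\,dM$.

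For the converse \eqref{prva-tocka}$\Rightarrow$\eqref{druga-tocka} I would reduce to Theorem \ref{komutativne-C-star-algebre}. For each hermitian projection $P\in\cW_1$ set $\rho_P(f):=\rho(f\otimes P)$. Since $P=P^\ast=P^2$, the element $\rho(P)=\rho(1\otimes P)$ is a hermitian projection in $\cW_2$, and $\rho_P$ is a unital $\ast$-representation of $C(X,\CC)$ into the von Neumann corner $\rho(P)\,\cW_2\,\rho(P)$. Applying Theorem \ref{komutativne-C-star-algebre} in this corner produces a unique regular spectral measure $M_P:\B(X)\to\cW_2$ taking values in that corner, normalized by $M_P(X)=\rho(P)$, and satisfying $\rho(f\otimes P)=\int_X f\,dM_P$. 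This fixes the candidate values $M(\Delta)(P):=M_P(\Delta)$ on projections.

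The core of the argument is to upgrade $P\mapsto M_P(\Delta)$ to a bounded linear map $M(\Delta)\in B(\cW_1,\cW_2)$. First I would establish additivity on orthogonal projections: for $PQ=0$ the map $f\mapsto\int_X f\,d(M_P+M_Q)$ equals $\rho_{P+Q}$, so uniqueness of the representing regular operator-valued measure forces $M_{P+Q}=M_P+M_Q$. Next, the compatibility relation for commuting projections comes from $\rho(f\otimes P)\rho(g\otimes Q)=\rho(fg\otimes PQ)$, which gives $\left(\int_X f\,dM_P\right)\left(\int_X g\,dM_Q\right)=\int_X fg\,dM_{PQ}$ for all $f,g\in C(X,\CC)$; approximating $\chi_{\Delta_1},\chi_{\Delta_2}$ by continuous functions, with convergence of the spectral integrals in the strong operator topology, yields $M_P(\Delta_1)M_Q(\Delta_2)=M_{PQ}(\Delta_1\cap\Delta_2)$. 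Since each $M_P(\Delta)$ is a hermitian projection in $\cW_2$ and the values at orthogonal projections are mutually orthogonal (take $\Delta_1=\Delta_2=\Delta$ and $PQ=0$, using $M_0=0$), for a spectral combination $A=\sum_i\lambda_i P_i$ with the $P_i$ mutually orthogonal one obtains $\left\|\sum_i\lambda_i M_{P_i}(\Delta)\right\|\le\max_i|\lambda_i|=\|A\|$. This estimate, together with the additivity above, gives well-definedness and boundedness, so $P\mapsto M_P(\Delta)$ extends by linearity to the norm-dense span of projections, then continuously to all self-adjoint elements, and finally complex-linearly to $M(\Delta)\in B(\cW_1,\cW_2)$.

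It then remains to verify that $M$ is a regular normalized non-negative spectral measure: the spectral-measure property and normalization of each $M_P$ and the regularity are inherited from Theorem \ref{komutativne-C-star-algebre}, while the full compatibility relation for arbitrary, not necessarily commuting, projections follows from the commuting case by linear extension in each slot. The representing identity $\rho(F)=\int_X F\,dM$ holds on elementary tensors by construction and extends by density, and uniqueness follows because any other such $M'$ satisfies $M'_P=M_P$ for every projection $P$ by the uniqueness clause of Theorem \ref{komutativne-C-star-algebre}, hence $M'=M$. The hard part will be the passage from the integral identities to the pointwise compatibility relation $M_P(\Delta_1)M_Q(\Delta_2)=M_{PQ}(\Delta_1\cap\Delta_2)$, together with the well-definedness and boundedness of the extension of $M(\Delta)$ to all of $\cW_1$; these rest on careful strong-operator approximation of indicator functions and on the orthogonality of the projection values $M_{P_i}(\Delta)$.
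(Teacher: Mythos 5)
Your architecture is the same as the paper's: Theorem \ref{reprezentacijski-izrek-2} is quoted here from \cite[Theorem 9.1]{Zalar2014}, and its proof (mirrored in this paper's proof of Theorem \ref{theorem-B}) proceeds exactly as you propose --- slice $\rho$ along hermitian projections, apply Theorem \ref{komutativne-C-star-algebre} to each $\rho_P$ (your corner trick for the non-unitality of $\rho_P$ is a careful touch), then reassemble the family $\{M_P\}_P$ into a non-negative spectral measure. The direction (2)$\Rightarrow$(1), the uniqueness argument, and the compatibility relation for commuting projections are fine. The genuine gap is in the reassembly, i.e., in the construction of the bounded linear map $M(\Delta)\in B(\cW_1,\cW_2)$. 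Your estimate $\|\sum_i\lambda_i M_{P_i}(\Delta)\|\le\max_i|\lambda_i|$ holds only for \emph{mutually orthogonal} families $P_i$, and the set of such orthogonal combinations is not a linear subspace: a sum of two orthogonal combinations built from non-commuting families is a general combination of projections, about which the estimate says nothing. Consequently ``extends by linearity to the span, then continuously to all self-adjoint elements'' is unjustified: extension by continuity requires boundedness on the whole span, which you have not established (nor does orthogonal additivity alone give well-definedness on the span; one needs the full linear-relation condition \eqref{Pogoj1111}, although your uniqueness-of-representing-measure argument does adapt to yield that). The concrete failure point is additivity: for non-commuting self-adjoint $A,B$, the differences $S_\ell(A)+S_\ell(B)-S_\ell(A+B)$ of approximating Riemann sums tend to $0$ in norm but are \emph{not} orthogonal combinations, so your estimate gives no control of their images, and $M(\Delta)(A+B)=M(\Delta)(A)+M(\Delta)(B)$ does not follow. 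This is precisely the difficulty that Theorem \ref{karakterizacija-nenegativnih-spektralnih-mer} and Proposition \ref{trditev-o-generiranju-pozitivne-mere} (i.e., \cite[Theorem 8.1]{Zalar2014}) are designed to resolve; note that that theorem requires, besides the analogues of your conditions, the additional limiting-sequence product condition \eqref{Pogoj3333}, which your proposal never verifies.

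A second flaw: a non-negative spectral measure must satisfy $M_P(\Delta_1)M_Q(\Delta_2)=M_{PQ}(\Delta_1\cap\Delta_2)$ for \emph{all} hermitian projections, including non-commuting $P,Q$, where $PQ$ is no longer self-adjoint and $M_{PQ}$ means the extended map $M(\Delta_1\cap\Delta_2)$ applied to $PQ$. Your claim that this ``follows from the commuting case by linear extension in each slot'' is unsound: two bilinear maps can agree on all commuting pairs of projections yet differ --- for instance $(A,B)\mapsto AB$ and $(A,B)\mapsto BA$ --- because for fixed $P$ the projections commuting with $P$ span only $\{P\}'\cap\cW_1$, not all of $\cW_1$. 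The step is recoverable (once $M(\Delta)$ genuinely exists on all of $\cW_1$, rerun your indicator-approximation argument for arbitrary $P,Q$), but as written it is circular with the missing extension, and in the paper's treatment it is exactly the content encoded by condition \eqref{Pogoj3333}.
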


Since the common assumption in Theorems \ref{komutativne-C-star-algebre} and \ref{reprezentacijski-izrek-2} is the boundedness of $\ast$-representations, our initial motivation for research was to study unbounded versions of Theorems \ref{komutativne-C-star-algebre} and \ref{reprezentacijski-izrek-2}. Our initial setting were the algebras $C(X,\CC)$ and $C(X,\CC)\otimes W$, where $X$ is not a compact Hausdorff space. However, in the course of research, the natural question of replacing the algebra $C(X,\CC)$ by a general $\ast$-algebra $\cB$ appeared. For this purpose, Theorems \ref{komutativne-C-star-algebre} and \ref{reprezentacijski-izrek-2} have to be generalized to general $\ast$-algebras first. The motivation for this paper is the following problem.

\begin{problem*}
	Let $\cB$ be a commutative unital $\ast$-algebra and $\cW$, $\cW_1$, $\cW_2$ von Neumann algebras.
	Find one-to-one correspondence between $\ast$-representations $\rho$ of
	\begin{enumerate}
		\item[(A)] $\cB$, which map into a von Neumann algebra $\cW$, and special spectral measures.
		\item[(B)] $\cB\otimes \cW_1$, which map into a von Neumann algebra $\cW_2$, and special non-negative
			spectral measures.
		\item[(C)] $\cB$, which map into a $\ast$-algebra of all linear	operators on a densely defined 
			subspace of a Hilbert space, and special spectral measures.
		\item[(D)] $\cB\otimes \cW$, which map into a $\ast$-algebra of all linear	operators on a densely 
			defined subspace of	a Hilbert space, and special non-negative spectral measures.
	\end{enumerate}
\end{problem*}

	Before introducing terminology about $\ast$-algebra and their $\ast$-representations,
let us briefly survey some known facts about integral representations from literature. 

\subsection*{Integral representations of commutative semigroups with involution in literature}
A usual and even more general context, is the study of integral representations (via spectral measures) of $\ast$-representations of commutative semigroups $S$ with involution $\ast$ on a Hilbert space (where the operators are not necessarily bounded). The integration is over the set $S^\ast$ of all characters of $S$, i.e., functions $\chi:S\to \CC$ such that $\chi(st^\ast)=\chi(s)\chi(t)^\ast$ holds for every $s,t\in S$. $S^\ast$ is a completely regular space when equipped with the topology of pointwise convergence inherited from $\CC^S$. The cases $S=(-\infty, \infty)$ or $S=[0,\infty)$ with multiplication as the semigroup operation are well-known, see \cite[Chapter XI]{Nagy}. The extension to
general commutative semigroups with identity and involution are
\cite[Theorem 2]{Ressel-Ricker1998} and \cite[Theorem 3.1]{Atan}.  
\cite[Theorem 2]{Ressel-Ricker1998} is equivalent to Theorem \ref{komutativne-C-star-algebre} above. The main step in the proof of the equivalence is to show, that if a semigroup with an involution is induced by a unital, commutative complex algebra with an involution, where the semigroup operation is the algebra multiplication, 
then the spectral measure $E$ is supported on the subspace of linear characters. The extension to unbounded operators is also deeply studied, see \cite{Ressel-Ricker2002} and reference therein. The most general representation theorem (with unique regular spectral measure), 
where there are no topological constraints on the semigroup $S$ involved, is \cite[Theorem 1.2]{Ressel-Ricker2002}. The proof is very tehnical but the main step is the reduction to the bounded case \cite[Theorem 2]{Ressel-Ricker1998} and then the constructed spectral measure  on a certain dense subspace ($D_c$ in the notation of \cite{Ressel-Ricker2002}) is extended by continuity to the whole Hilbert space. The density of $D_c$ is included in the definition of $\ast$-representation from 
\cite{Ressel-Ricker2002} (see \cite[Definition 1.1]{Ressel-Ricker2002}). Since $D_c$ is always dense if we integrate with respect to a regular spectral measure, this is a natural requirement. 

\subsection*{Terminology on $\ast$-algebras and on $\ast$-representations} We follow the monograph \cite{Sch2}. Let $\cD$ be a dense linear subspace in a Hilbert space $\cH$.

	A \textsl{$\ast$-algebra} is a complex associative algebra $\cA$ equipped with a mapping
$a\to a^\ast$, called the \textsl{involution} of $\cA$, such that
$(\lambda a+\mu b)^\ast=\overline \lambda a^\ast+\overline \mu b^\ast$ and $(a^\ast)^\ast$ for
all $a, b \in \cA$ and all $\lambda,\mu \in \CC$.
	$\cL^{+}(\cD)$ is the set of all linear operators $a$ with domain $\cD$ for which 
$a \cD\subseteq \cD, \cD\subseteq \cD(a^{\ast})$ and 
$a^{\ast}\cD\subseteq \cD$, where $a^{\ast}$ stands for the classical adjoint of an operator $a$.
$\cL^+(\cD)$ equipped with an involution $a^+=a^\ast\upharpoonright_{\cD}$ is a $\ast$-algebra. 
Note that the identity operator $\id_{\cD}$ belongs to $\cL^+(\cD)$ and by \cite[Proposition 2.1.8]{Sch2}, all 
$a\in\cL^+(\cD)$
are closable. We write $\overline{a}$ for the \textsl{closure} of an operator $a$.

	A \textsl{$\ast$-representation} of a $\ast$-algebra $\cA$ on $\cD$ is an algebra homomorphism 
$\rho$ of $\cA$ into $\cL^+(\cD)$ such that $\rho(1)=\id_{\cD}$ and 
$\left\langle \rho(a)\phi, \psi\right\rangle = \left\langle \phi, \rho(a^\ast)\psi\right\rangle$ 
for all $\phi, \psi \in \cD$ and all $a \in \cA$.
	The \textsl{graph topology} of $\rho$ is the locally convex topology on the vector
space $\cD$ defined by the norms $h \rightarrow \|\phi\| + \|\rho(a)\phi\|$, 
where $a \in \cA$. 
If $\cD(\overline{\rho})$ denotes the completion of $\cD$ in the graph topology of 
$\rho$, then 
$\overline{\rho}(a) := \overline{\rho(a)}\upharpoonright \cD(\overline{\rho})$, $a \in \cA$, 
defines a $\ast$-representation of $\cA$ with domain $\cD(\overline{\rho})$, 
called the \textsl{closure} of $\rho$.
In particular, $\rho$ is \textsl{closed} if and only if $\cD$ is complete in 
the graph topology of $\rho$.
A closed $\ast$-representation $\rho$ of a commutative $\ast$-algebra $\cB$ 
is called \textsl{integrable} if $\overline{\rho(b^\ast)} = \rho(b)^\ast$ for all $b \in \cB$.
For several characterizations of the integrability, see \cite[Chapter 9]{Sch2}.

	\textsl{Character space} $\widehat{\cB}$ of a commutative $\ast$-algebra $\cB$ 
is the set of all non-trivial $\ast$-homo\-mor\-phisms $\chi:\cB\to\CC$. 
An element $b \in \cB$ can be viewed as a function $f_b$ on the set $\widehat{\cB}$, 
i.e., $f_b(\chi) = \chi(b)$ for $b\in \cB$ and $\chi\in \widehat{\cB}$.  
Let $\tau$ denote the weakest topology on the set $\widehat{\cB}$ for which all functions 
$f_b$ are continuous. 
This topology is generated by the sets $f^{-1}_b((c,d))$, $-\infty\leq c \leq d\leq \infty$.
The topology $\tau$ on $\widehat{\cB}$ is Hausdorff. 
	
\subsection*{Comparison of $\ast$-representations of semigroups with involution and $\ast$-algebras}
	A possible approach to solve Problems A-D is to induce a semigroup with involution from the given commutative $\ast$-algebra and use
the results on integral representations of $\ast$-representations of semigroups with involution presented above. However, there is a problem, because of the differences
in the definitions of $\ast$-representations of a $\ast$-algebra (see \cite{Sch2}) and a semigroup (see \cite{Ressel-Ricker1998} and \cite{Ressel-Ricker2002}).
The differences are the following:
	\begin{enumerate}
		\item $\ast$-representations of semigroups are not `linear' compared to $\ast$-algebras, where they are.
		\item $\ast$-representation of a semigroup maps into the set of normal operators (not necessarily having the same domain) while $\ast$-representation of a
			$\ast$-algebra maps into the set of all linear operators on a chosen dense subspace in a Hilbert space, obeying some additional conditions.
		\item $\ast$-representation of a semigroup has all the necessary properties to be an integration 
			with respect to a regular spectral measure, while in the case of a $\ast$-algebra this
			is not required. 
	\end{enumerate} 
	Hence, there are more $\ast$-representations of a $\ast$-algebra. Therefore, one has to be cautious when using the results for semigroups 
in the context of $\ast$-algebras. However, if we concentrate on $\ast$-representations with a regular representing spectral mesures, they can be used. In the next subsection we prove two properties
of such $\ast$-representations.

\subsection*{$\ast$-representations of commutative $\ast$-algebras with a regular representing spectral measures}
Let $\rho:\cB\to \cL^+(\cD)$ be a $\ast$-representation of a commutative $\ast$-algebra $\cB$ on a dense subspace $\cD$ of a Hilbert space $\cH$. We say, $\rho$ has an \textsl{integral representation}
if there is a spectral measure $E:\Bor(\widehat{\cB})\rightarrow B(\cH)$ on a Borel 
$\sigma$-algebra $\Bor(\widehat{\cB})$, such that for every $b\in \cB$ we have
	$\overline{\rho(b)}x=(\int_{\widehat{\cB}} f_b(\chi)\;dE(\chi))x$ for every 
$x\in \cD(\overline{\rho(b)})$.

The following proposition states, that every $\ast$-representation $\rho:\cB\to \cL^+(\cD)$ with an integral representation is integrable. 

\begin{proposition} \label{integrabilnost}
		Assume the notation above. If $\rho:\cB\to \cL^+(\cD)$ has an integral representation, then it is 
	integrable. 
\end{proposition}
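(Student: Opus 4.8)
The plan is to produce, for each $b\in\cB$, one closed operator on $\cH$ that simultaneously serves as the closure $\overline{\rho(b)}$ and whose adjoint is the closure $\overline{\rho(b^\ast)}$; once this is in place, the integrability identity $\rho(b)^\ast=\overline{\rho(b^\ast)}$ drops out of the elementary fact that passing to the adjoint and passing to the closure commute. Concretely, for $b\in\cB$ I would set $N_b:=\int_{\widehat{\cB}} f_b\,dE$, where $E$ is the spectral measure furnished by the integral representation, and organize the argument around the two families $\{\overline{\rho(b)}\}_{b\in\cB}$ and $\{N_b\}_{b\in\cB}$.

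The first key step is an adjoint computation inside the spectral calculus. Since $f_b$ is $\Bor(\widehat{\cB})$-measurable and $E$-a.e.\ finite, $N_b$ is a densely defined closed operator, and its adjoint is obtained by conjugating the integrand, $N_b^\ast=\int_{\widehat{\cB}}\overline{f_b}\,dE$. Here the structure of $\widehat{\cB}$ enters decisively: because each $\chi\in\widehat{\cB}$ is a $\ast$-homomorphism, one has the pointwise identity $f_{b^\ast}(\chi)=\chi(b^\ast)=\overline{\chi(b)}=\overline{f_b(\chi)}$, i.e.\ $\overline{f_b}=f_{b^\ast}$ on $\widehat{\cB}$. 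Substituting into the adjoint formula yields
\[
N_b^\ast=\int_{\widehat{\cB}} \overline{f_b}\,dE=\int_{\widehat{\cB}} f_{b^\ast}\,dE=N_{b^\ast},
\]
so the family $\{N_b\}$ is stable under adjunction in a way that faithfully reflects the involution of $\cB$.

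The second ingredient is the hypothesis together with a general fact about closable operators. Reading the integral representation as the operator equality $\overline{\rho(b)}=N_b$ for every $b$ (in particular $\overline{\rho(b^\ast)}=N_{b^\ast}$), and using that every $\rho(b)\in\cL^+(\cD)$ is densely defined and closable so that adjoint and closure commute, namely $\rho(b)^\ast=(\overline{\rho(b)})^\ast$, I would conclude
\[
\rho(b)^\ast=(\overline{\rho(b)})^\ast=N_b^\ast=N_{b^\ast}=\overline{\rho(b^\ast)},
\]
which is exactly the defining equality of integrability; if one insists that integrability be asserted for a closed representation, one passes to $\overline{\rho}$, whose integral representation is inherited from that of $\rho$.

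The main obstacle I anticipate is not this chain of equalities but the rigorous control of domains at its two ends. On one side, the formula $N_b^\ast=\int\overline{f_b}\,dE$ is being invoked for a possibly unbounded, complex-valued integrand, so the argument must cite the precise functional-calculus statement for spectral measures (as in \cite{Sch2}) rather than the easy bounded case. On the other side — and this is the genuinely delicate point — the proof uses the \emph{equality} $\overline{\rho(b)}=N_b$ and not merely the inclusion $\overline{\rho(b)}\subseteq N_b$: a symmetric operator may be contained in a self-adjoint one without being essentially self-adjoint, and under the containment alone $\rho(b)^\ast$ could be strictly larger than $\overline{\rho(b^\ast)}$. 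Thus the crux is to read the integral-representation hypothesis as an identity of operators with matching domains and then to exploit the commutation of closure with adjoint; everything else is bookkeeping in the spectral calculus.
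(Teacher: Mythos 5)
Your proof is correct, but it takes a genuinely different route from the paper's. The paper's own argument is two lines: since a spectral integral is always a normal operator (item (4) of Theorem \ref{izrek2}), the hypothesis forces $\overline{\rho(b)}$ to be normal for every $b\in\cB$, and then integrability follows from the characterization in \cite[Theorem 9.1.2]{Sch2} (a closed $\ast$-representation of a commutative $\ast$-algebra is integrable iff all the closures $\overline{\rho(b)}$ are normal). You instead verify the defining identity $\rho(b)^\ast=\overline{\rho(b^\ast)}$ directly, combining the adjoint formula $\II_E(f)^\ast=\II_E(\overline{f})$ (item (1) of Theorem \ref{izrek2}) with the observation that elements of $\widehat{\cB}$ are $\ast$-homomorphisms, so $\overline{f_b}=f_{b^\ast}$, and with the commutation of closure and adjoint. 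What your route buys is self-containedness: it uses only facts about the spectral calculus already stated in the paper and avoids invoking the nontrivial external characterization theorem; it also makes visible exactly where the involution-compatibility of the characters enters. What the paper's route buys is brevity and a direct link to normality, which is the standard integrability criterion. Finally, the domain subtlety you flag at the end --- that the argument needs the operator \emph{equality} $\overline{\rho(b)}=\II_E(f_b)$ rather than the inclusion $\overline{\rho(b)}\subseteq\II_E(f_b)$ that the paper's definition literally provides --- is a genuine issue, but it afflicts the paper's proof in exactly the same way: a closed operator properly contained in a normal operator need not be normal, so the paper's deduction that $\overline{\rho(b)}$ ``must be normal'' also tacitly reads the integral representation as an equality with matching domains. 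Your explicit acknowledgment of this point is a strength of your write-up, not a gap relative to the paper.
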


\begin{proof}
	Since the spectral integral is always a normal operator (see (iv) of Theorem \ref{izrek2} below), 
	$\overline{\rho(b)}$ must be normal for every $b\in \cB$. By 
	\cite[Theorem 9.1.2]{Sch2}, this is true iff $\rho$ is integrable. 
\end{proof}
	
Now we derive a necessary condition for the regularity of the representing spectral measure 
$E:\Bor(\widehat{\cB})\rightarrow B(\cH)$
of $\rho:\cB\to \cL^+(\cD)$. We say that $E$ is \textsl{regular}, if for every $h_1,h_2\in \cH$
the complex measure
	$E_{h_1,h_2}:\Bor(\widehat{\cB})\rightarrow \CC$, defined by
		$E_{h_1,h_2}(\Delta):=\left\langle E(\Delta)h_1,h_2\right\rangle$
is regular.
Let $\scrK$ be the set of compact sets in $\widehat{\cB}$.
For a compact set $K\subseteq \Bor(\widehat{\cB})$ we define the function 
	$\alpha_K:C(\widehat\cB,\CC)\to \RR$ by
		$\alpha(K)(f):=\sup_{\chi\in K}\|f(\chi)\|,$
where $C(\widehat\cB,\CC)$ 
is the vector space of continuous functions.
	Let $\rho:\cB\to \cL^+(\cD)$ be a $\ast$-representation and $K\in \scrK$ a compact set.
Define the set
	$$D_{\alpha_K, \rho}:=\left\{ h\in \cap_{b\in \widehat \cB}\cD(\overline{\rho(b)})
			\colon \|\overline{\rho(b)}h \|
			\leq \alpha_K(b)\|h\|\right\}.$$
The following proposition states, that if a representing measure $E$ of $\rho$ is regular, then
$\cup_{K\in \scrK} D_{\alpha_K, \rho}$ is dense in $\cH$.

\begin{proposition} \label{gostost-D-ja}
		Assume the notation above. If $\rho$ has an integral representation by a regular spectral measure 
	$E$, then the set $\cup_{K\in \scrK} D_{\alpha_K, \rho}$ is dense in $\cH$.
\end{proposition}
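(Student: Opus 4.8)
The plan is to approximate an arbitrary $g\in\cH$ by vectors of the form $E(K)g$ with $K\in\scrK$ compact, to verify that each such vector lies in $D_{\alpha_K,\rho}$, and then to use the regularity of $E$ to make $E(K)g$ arbitrarily close to $g$. Writing $N_b:=\int_{\widehat\cB}f_b\,dE$ for the spectral integral, the two things I need to check, for a fixed compact $K$ and arbitrary $g$, are (i) that $E(K)g\in\cD(\overline{\rho(b)})$ for every $b\in\cB$, and (ii) the norm bound $\|\overline{\rho(b)}E(K)g\|\le\alpha_K(b)\|E(K)g\|$.

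The delicate point, and the one I expect to be the main obstacle, is (i): the integral representation only guarantees $\overline{\rho(b)}x=N_bx$ on $\cD(\overline{\rho(b)})$, so a priori this domain could be strictly smaller than $\cD(N_b)$, and it is not clear that $E(K)g$ lands in the smaller domain. I would remove this difficulty by showing that in fact $\overline{\rho(b)}=N_b$ with equal domains for every $b$, using integrability. Since $\rho(b)$ and $N_b$ agree on $\cD$ and $N_b$ is closed, we get $\overline{\rho(b)}\subseteq N_b$; passing to adjoints gives $N_b^\ast\subseteq(\overline{\rho(b)})^\ast=\rho(b)^\ast$. By Proposition \ref{integrabilnost} the representation $\rho$ is integrable, so $\rho(b)^\ast=\overline{\rho(b^\ast)}$, while the spectral calculus together with $f_{b^\ast}=\overline{f_b}$ (a consequence of $\chi(b^\ast)=\overline{\chi(b)}$ for characters) gives $N_b^\ast=N_{b^\ast}$. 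Combining this with $\overline{\rho(b^\ast)}\subseteq N_{b^\ast}$ (the inclusion $\overline{\rho(\,\cdot\,)}\subseteq N_{(\cdot)}$ applied to $b^\ast$) yields $N_{b^\ast}\subseteq\overline{\rho(b^\ast)}\subseteq N_{b^\ast}$, hence $\overline{\rho(b^\ast)}=N_{b^\ast}$; as $b$ ranges over $\cB$ this gives $\overline{\rho(b)}=N_b$ for all $b$.

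With the domains identified, (i) and (ii) reduce to routine spectral calculus. Because $f_b$ is continuous and $K$ is compact, $f_b$ is bounded on $K$, and using $E_{E(K)g,E(K)g}(\Delta)=E_{g,g}(\Delta\cap K)$ the calculation
\[
\int_{\widehat\cB}|f_b|^2\,dE_{E(K)g,E(K)g}=\int_K|f_b|^2\,dE_{g,g}\le\alpha_K(b)^2\,\|E(K)g\|^2<\infty
\]
shows $E(K)g\in\cD(N_b)=\cD(\overline{\rho(b)})$, and since the left-hand side equals $\|N_bE(K)g\|^2=\|\overline{\rho(b)}E(K)g\|^2$ by the spectral norm formula (item (iv) of Theorem \ref{izrek2}), it simultaneously gives (ii); thus $E(K)g\in D_{\alpha_K,\rho}$. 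Finally, fixing $g$ and $\epsilon>0$, the measure $E_{g,g}$ is positive, finite with $E_{g,g}(\widehat\cB)=\|g\|^2$, and regular, so by inner regularity there is a compact $K$ with $E_{g,g}(\widehat\cB\setminus K)<\epsilon^2$; then $\|g-E(K)g\|^2=\|E(\widehat\cB\setminus K)g\|^2=E_{g,g}(\widehat\cB\setminus K)<\epsilon^2$. Hence every $g\in\cH$ is approximated by elements of $\cup_{K\in\scrK}D_{\alpha_K,\rho}$, which is therefore dense in $\cH$.
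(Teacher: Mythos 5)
Your proof is correct and follows the same overall strategy as the paper's: approximate an arbitrary $g\in\cH$ by $E(K)g$ using inner regularity of $E_{g,g}$, and show $E(K)g\in D_{\alpha_K,\rho}$ via the estimate $\int_{\widehat\cB}|f_b|^2\,dE_{E(K)g,E(K)g}=\int_K|f_b|^2\,dE_{g,g}\leq\alpha_K(b)^2\|E(K)g\|^2$. The one genuine difference is your second paragraph. The paper's proof passes directly from this estimate (which shows $E(K)g\in\cD(\II_E(f_b))$ with the right norm bound) to the conclusion $E(K)g\in D_{\alpha_K,\rho}$, tacitly identifying $\cD(\overline{\rho(b)})$ with $\cD(\II_E(f_b))$; since the definition of an integral representation only gives the inclusion $\overline{\rho(b)}\subseteq\II_E(f_b)$, this identification is exactly the point you flag as delicate. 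Your adjoint argument closes it cleanly: from $\overline{\rho(b)}\subseteq\II_E(f_b)$ you get $\II_E(f_{b^\ast})=\II_E(f_b)^\ast\subseteq\rho(b)^\ast=\overline{\rho(b^\ast)}\subseteq\II_E(f_{b^\ast})$, using Theorem \ref{izrek2}(1), the relation $f_{b^\ast}=\overline{f_b}$ on $\widehat\cB$, and the integrability supplied by Proposition \ref{integrabilnost}; hence $\overline{\rho(b)}=\II_E(f_b)$ with equal domains. So your write-up is, if anything, more complete than the paper's: the extra lemma is not a detour but a rigorous justification of a step the paper leaves implicit, and the rest of the argument (the spectral norm formula and the regularity estimate $\|g-E(K)g\|^2=E_{g,g}(\widehat\cB\setminus K)$) matches the paper's proof line for line.
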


\begin{proof}
		First we prove, that the set $\cup_{K\in \scrK} E(K)(\cH)$ is dense in $\cH$. Pick $h\in \cH$.
	For every $K\in \scrK$ it is true that
		\begin{eqnarray*}
			\|h-E(K)h\| &=& \|E(K^c)h\|=
			(E_{h,h}(K^c))^{\frac{1}{2}}
			=	(E_{h,h}(X)-E_{h,h}(K)
					)^{\frac{1}{2}}\\
			&=& 
				(\|h\|^2-E_{h,h}(K))^{\frac{1}{2}}.
		\end{eqnarray*}
	By the regularity of $E_{h,h}$, we have 
	$\displaystyle\sup_{K\in \scrK}E_{h,h}(K)=\|h\|^2$. 	
	Hence, we conclude that 
	the set $\cup_{K\in \scrK} E(K)(\cH)$ is dense in $\cH$.
	
	Now we prove that the set $\cup_{K\in \scrK} D_{\alpha_K, \rho}$ contains the set 
	$\cup_{K\in \scrK} E(K)(\cH)$.
	For every $h\in \displaystyle\cup_{K\in \scrK} E(K)(\cH)$ there is a compact set 
	$K\in \scrK$ such that $h\in E(K)\cH$. Then for every $b\in \cB$ we have
	$$\int_{\widehat{B}}\left|f_{b}\right|^2\; dE_{h,h}=
		\int_{K}\left|f_{b}\right|^2\; dE_{h,h}\leq
		\alpha_K(b)^2 \int_{K}1\; dE_{h,h}\leq \alpha_K(b)^2 \|h\|^2.
		$$
	Hence, $\cup_{K\in \scrK} E(K)(\cH)\subseteq \cup_{K\in \scrK} D_{\alpha_K, \rho}$. 
	Thus, $\cup_{K\in \scrK} D_{\alpha_K, \rho}$ is dense in $\cH$.
\end{proof}	
	
By Propositions \ref{integrabilnost} and \ref{gostost-D-ja}, $\ast$-representations 
	$\rho:\cB\rightarrow \cL^+(\cD)$ with a representing spectral measure 
	$E:\Bor(\widehat{\cB})\rightarrow B(\cH)$ 
are integrable and have a dense set $\cup_{K\in \scrK} D_{\alpha_K, \rho}$.

\subsection*{Problem C - known result}

	For a special case of a countably generated commutative unital $\ast$-algebra $\cB$, the solution to
Problem C is the following result (see \cite[Theorem 7]{Sav-Sch}).

\renewcommand{\thetheorem}{C}\begin{theorem}\label{Sav-Schjev-rezultat}
	Suppose that $\cB$ is a countably generated commutative unital $\ast$-algebra. Suppose
	$\widehat{\cB}$ is equipped with the Borel structure induced by the weakest topology 
	for which all functions $f_b$, $b \in \cB$, are continuous.
	Let $\rho:\cB\rightarrow \cL^+(\cD)$ be a closed $\ast$-representation of $\cB$ on a dense
	linear subspace $\cD$ of a Hilbert space $\cH$. 
	The following statements are equivalent.
		\begin{enumerate}
			\item $\rho$ is an integrable $\ast$-representation of $\cB$.
			\item There exists a unique spectral measure $E_\rho:\Bor(\widehat{\cB})\to B(\cH)$
				such that 
					$\overline{\rho(b)} =\int_{\widehat{\cB}}f_b(\chi)\;dE(\chi)$ for all $b\in \cB$.
		\end{enumerate}
\end{theorem}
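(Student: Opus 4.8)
The plan is to dispatch the easy implication using the machinery already in place and then, for the hard direction, to construct the spectral measure through the joint spectral theorem for a strongly commuting family of self-adjoint operators. The implication $(2)\Rightarrow(1)$ is immediate: if $\overline{\rho(b)}=\int_{\widehat{\cB}}f_b\,dE$ for every $b\in\cB$, then $\rho$ has an integral representation in the sense defined above, so Proposition \ref{integrabilnost} yields that $\rho$ is integrable. The content of the theorem is therefore the construction in $(1)\Rightarrow(2)$.

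For $(1)\Rightarrow(2)$ I would first reduce to hermitian generators. Since $\cB$ is commutative, unital and countably generated, I choose a countable generating set and replace each generator $a$ by its hermitian parts $\tfrac12(a+a^\ast)$ and $\tfrac1{2i}(a-a^\ast)$, so that I may assume the generators $a_1,a_2,\ldots$ are all hermitian. Because $\rho$ is integrable, each $A_n:=\overline{\rho(a_n)}$ is self-adjoint, and the characterization of integrability for commutative $\ast$-algebras (\cite[Theorem 9.1.2 and Chapter 9]{Sch2}) guarantees that the family $\{A_n\}$ is strongly commuting. Applying the multivariate spectral theorem to this countable strongly commuting family produces a joint spectral measure $F$ on $\Bor(\RR^{\NN})$, supported on the product of the spectra $\sigma(A_n)$, with $A_n=\int_{\RR^\NN}\lambda_n\,dF(\lambda)$ for every $n$.

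The next step is to transport $F$ from $\RR^\NN$ to $\widehat{\cB}$. As the $a_n$ generate $\cB$, the evaluation map $\Phi:\widehat{\cB}\to\RR^\NN$, $\chi\mapsto(\chi(a_1),\chi(a_2),\ldots)$, is injective and, by the very definition of the topology $\tau$, a homeomorphism onto its range, which is precisely the set of $\lambda\in\RR^\NN$ respecting every algebraic relation among the $a_n$ and is therefore a closed (hence Borel) subset. The crucial point is that $F$ is concentrated on this range: for any relation $p(a_{n_1},\ldots,a_{n_k})=0$ holding in $\cB$, the homomorphism property together with strong commutativity gives $p(A_{n_1},\ldots,A_{n_k})=0$ as operators, and the joint functional calculus then forces $p(\lambda_{n_1},\ldots,\lambda_{n_k})=0$ for $F$-almost every $\lambda$; intersecting over the countably many relations needed to present $\cB$, I conclude that $F$ is supported on $\Phi(\widehat{\cB})$. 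Setting $E_\rho:=\Phi_*F$, that is $E_\rho(\Delta):=F(\Phi(\Delta))$ for $\Delta\in\Bor(\widehat{\cB})$, gives a spectral measure on $\widehat{\cB}$, and a change of variables in the spectral integral yields $\overline{\rho(a_n)}=\int_{\widehat{\cB}}f_{a_n}\,dE_\rho$; extending from generators to arbitrary $b\in\cB$ via the functional calculus delivers $\overline{\rho(b)}=\int_{\widehat{\cB}}f_b\,dE_\rho$. Uniqueness follows since the integrals $\int_{\widehat{\cB}}f_b\,dE_\rho$ over all $b$ determine the joint functional calculus of the family $\{A_n\}$, and hence the measure, by the standard uniqueness of the joint spectral measure of a strongly commuting family.

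The main obstacle I anticipate is the concentration step: verifying that $F$ lives on $\Phi(\widehat{\cB})$ and that $\Phi$ is a Borel isomorphism onto its range, so that the push-forward $E_\rho$ is genuinely a spectral measure on $\Bor(\widehat{\cB})$ rather than on the larger ambient space $\RR^\NN$. This is the $\ast$-algebra analogue of the \emph{support on characters} reduction noted for semigroups in the discussion of \cite{Ressel-Ricker1998}, and it is exactly where countable generation is used, both to embed $\widehat{\cB}$ into the Polish space $\RR^\NN$ and to reduce the almost-everywhere vanishing of relations to countably many conditions.
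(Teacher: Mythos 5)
You should first note that the paper itself contains no proof of this statement: it is Theorem C, quoted from \cite[Theorem 7]{Sav-Sch}, so there is no internal argument to compare yours against; I can only assess your proposal on its own terms. Your route is sound and is essentially the standard one: $(2)\Rightarrow(1)$ via Proposition \ref{integrabilnost} is fine; the reduction to countably many hermitian generators is fine; integrability gives that each $A_n=\overline{\rho(a_n)}$ is self-adjoint and, via \cite[Theorem 9.1.7]{Sch2} (affiliation of all $\rho(b)$ with a common abelian von Neumann algebra), that $\{A_n\}$ strongly commutes; the joint spectral measure on $\Bor(\RR^{\NN})$ for a countable strongly commuting family exists (the product $\sigma$-algebra coincides with the Borel $\sigma$-algebra of the Polish space $\RR^\NN$, and the product construction does not need separability of $\cH$); and $\Phi$ is a homeomorphism onto a closed subset, since the initial topology of $\{f_b\}_{b\in\cB}$ coincides with that of $\{f_{a_n}\}_n$ and the image is the common zero set of the ideal of relations, which is countably spanned because the free commutative $\ast$-algebra on countably many hermitian generators has countable linear dimension.

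Two steps, however, are asserted more strongly than your argument delivers, and both need (short, but necessary) repairs. First, the homomorphism property does not directly give $p(A_{n_1},\dots,A_{n_k})=0$ \emph{as operators}: it gives $\rho(p(a_{n_1},\dots,a_{n_k}))=0$ on $\cD$, while the joint functional calculus only gives the inclusion $\II_F(p)\supseteq \rho(p(a_{n_1},\dots,a_{n_k}))\upharpoonright_{\cD}$; you must then argue that a closed operator extending $0\upharpoonright_{\cD}$ with $\cD$ dense extends $0\upharpoonright_{\cH}$, hence $\II_F(p)=0$ and therefore $F(\{\lambda\colon p(\lambda)\neq 0\})=0$. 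Second, and more importantly, for a general $b=p(a_{n_1},\dots,a_{n_k})$ the functional calculus gives only $\overline{\rho(b)}\subseteq \int_{\widehat\cB}f_b\,dE_\rho$, not the equality claimed in statement (2) --- this is exactly the difference between (2) and the weaker notion of ``integral representation'' (equality on $\cD(\overline{\rho(b)})$) used elsewhere in the paper. To close this gap, invoke integrability once more: $\overline{\rho(b)}$ is normal by \cite[Theorem 9.1.2]{Sch2}, the spectral integral is normal, and a normal operator admits no proper normal extension (if $N_1\subseteq N_2$ are normal, then $N_2^\ast\subseteq N_1^\ast$ and $\cD(N_i)=\cD(N_i^\ast)$ force $N_1=N_2$), so the inclusion is an equality. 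With these two insertions your argument is complete; the uniqueness step, via the spectral measures of the individual $A_n$ and a $\pi$-system argument on the generators of $\Bor(\widehat\cB)$, is correct.
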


\addtocounter{theorem}{-1}
\renewcommand{\thetheorem}{\arabic{section}.\arabic{theorem}}

Theorem \ref{Sav-Schjev-rezultat} gives a one-to-one correspondence between integrable $\ast$-rep\-re\-sen\-tations of a countably generated commutative unital $\ast$-algebra $\cB$ on a dense linear subspace $\cD$ of a Hilbert space $\cH$ and spectral measures $E$ on $\Bor(\widehat{\cB})$ such that $(\int_{\widehat{\cB}}f_b(\chi)\;dE(\chi))\cD\subseteq \cD$ for every $b\in \cB$. 
However, Theorem \ref{Sav-Schjev-rezultat} covers only countably generated commutative unital $\ast$-algebras $\cB$, e.g., $\cB:=\CC[x_1,\ldots,x_n]$. The case $\cB=C(X,\CC)$, where $X$ is a topological space, is not covered. 

%
%

\subsection*{Problem - new results}

	In what follows $\cB$ will be a commutative unital $\ast$-algebra, $\widehat{B}$ the character space of $\cB$ equipped with the Borel structure $\Bor(\widehat{B})$ induced by the weakest topology 
	for which all functions $f_b$, $b \in \cB$, are continuous, $\cH$, $\cK$ Hilbert spaces and $\cW\subseteq B(\cH), \cW_1\subseteq B(\cH)$ von Neumann algebras. 
	
	Recall, that the \textsl{support} of a spectral measure 
$E : \B(\widehat{\cB}) \rightarrow \cW$, denoted by $\supp(E)$, is the set
$\displaystyle\overline{\cup_{h\in \cH}\;\supp(E_{h,h})}$ in $\widehat{B}$.
	The following is the solution to Problem A.

\renewcommand{\thetheorem}{A}\begin{theorem} \label{theorem-A} 
	The following statements are equivalent.
		\begin{enumerate}
			\item $\rho: \cB \rightarrow \cW$ is a unital $\ast$-representation.
			\item There exists a unique regular spectral measure 
				$E : \B(\widehat{\cB}) \rightarrow \cW$ with a compact support such that 
					$\rho(b) =	\int_{\widehat{\cB}} f_b(\chi)\; dE(\chi)$
				for all $b \in \cB$.
		\end{enumerate}
\end{theorem}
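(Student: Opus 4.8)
The plan is to prove the two implications separately, treating $(2)\Rightarrow(1)$ as routine and concentrating on the construction needed for $(1)\Rightarrow(2)$.

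For $(2)\Rightarrow(1)$ I would start from a regular spectral measure $E$ with compact support $K=\supp(E)$ and set $\rho(b):=\int_{\widehat{\cB}}f_b\,dE$. Since $f_b$ is continuous and $K$ is compact, $f_b\upharpoonright K$ is bounded, so $\rho(b)$ is a well-defined element of $\cW$. The pointwise identities $f_{\lambda a+\mu b}=\lambda f_a+\mu f_b$, $f_{ab}=f_af_b$ and $\overline{f_b}=f_{b^\ast}$ hold because each $\chi\in\widehat{\cB}$ is a $\ast$-homomorphism; combining them with the multiplicativity and adjoint properties of the spectral integral (Theorem \ref{izrek2}) yields that $\rho$ is linear, multiplicative and $\ast$-preserving, while $\rho(1)=\int 1\,dE=E(\widehat{\cB})=\id$ shows it is unital.

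For the harder implication $(1)\Rightarrow(2)$ the key idea is to pass through a commutative $C^\ast$-algebra and then transport the resulting spectral measure back onto $\widehat{\cB}$. Since $\cB$ is commutative and $\rho$ preserves the involution, $\rho(\cB)$ is a commutative unital $\ast$-subalgebra of bounded operators; let $\cC\subseteq\cW$ be the $C^\ast$-algebra it generates and let $Y$ be its Gelfand spectrum, a compact Hausdorff space with $\cC\cong C(Y,\CC)$. Applying Theorem \ref{komutativne-C-star-algebre} to the inclusion $\cC\hookrightarrow\cW$ produces a unique regular normalized spectral measure $\widetilde E:\B(Y)\to\cW$ with $T=\int_Y\widehat T\,d\widetilde E$ for every $T\in\cC$, where $\widehat T$ is the Gelfand transform. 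I would then define $\Phi:Y\to\widehat{\cB}$ by $\Phi(y)(b):=y(\rho(b))$; using that $y$ is a character of $\cC$ one checks that $\Phi(y)$ is a nontrivial $\ast$-homomorphism, hence $\Phi(y)\in\widehat{\cB}$, and since $f_b\circ\Phi=\widehat{\rho(b)}$ is continuous for every $b$, the map $\Phi$ is continuous for the topology $\tau$. Setting $E:=\Phi_\ast\widetilde E$, i.e. $E(\Delta):=\widetilde E(\Phi^{-1}(\Delta))$ for $\Delta\in\B(\widehat{\cB})$, gives a spectral measure concentrated on the compact set $\Phi(Y)$, so $\supp(E)\subseteq\Phi(Y)$ is compact, and the change-of-variables formula for spectral integrals gives $\int_{\widehat{\cB}}f_b\,dE=\int_Y\widehat{\rho(b)}\,d\widetilde E=\rho(b)$.

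It remains to verify regularity and uniqueness. Regularity of each $E_{h,h}=\Phi_\ast(\widetilde E_{h,h})$ I would deduce from that of $\widetilde E_{h,h}$: inner regularity transfers directly because $\Phi$ carries compact sets to compact sets, and outer regularity follows once one uses that $E$ is concentrated on the compact set $\Phi(Y)$. For uniqueness, suppose $E_1,E_2$ both satisfy $(2)$ with compact supports $K_1,K_2$; on the compact set $K_1\cup K_2$ the family $\{f_b:b\in\cB\}$ is a point-separating, $\ast$-closed, unital subalgebra of $C(K_1\cup K_2,\CC)$, hence dense by Stone--Weierstrass, and since $\int g\,dE_1=\int g\,dE_2$ for all such $g$ by hypothesis, density forces $E_1=E_2$. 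I expect the main obstacle to be the transport step, namely producing the continuous map $\Phi$ with the correct relation $f_b\circ\Phi=\widehat{\rho(b)}$ and then checking that the pushforward $E$ inherits regularity and has compact support as a measure on the possibly ill-behaved space $\widehat{\cB}$; the algebraic verifications and the uniqueness argument are comparatively routine.
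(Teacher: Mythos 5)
Your proposal is correct, but it follows a genuinely different route from the paper's proof. The paper regards $\cB$ as a commutative multiplicative semigroup with involution and applies the Ressel--Ricker theorem (Theorem \ref{polgrupe-omejen}) to obtain a unique regular, compactly supported spectral measure on the semigroup character space $\cB^\ast$; it then quotes the proof of \cite[Theorem 1]{Ressel-Ricker1998} for the fact that, because the semigroup is induced from an algebra, the support of this measure lies in the set of \emph{linear} characters, so the measure simply restricts to $\B(\widehat{\cB})$. You instead never leave classical Gelfand theory: you take the norm closure $\cC$ of $\rho(\cB)$, a commutative unital $C^\ast$-subalgebra of $\cW$, apply Theorem \ref{komutativne-C-star-algebre} on its spectrum $Y$, and transport the resulting measure to $\widehat{\cB}$ by the pushforward along the continuous map $\Phi\colon Y\to\widehat{\cB}$, $\Phi(y)(b)=y(\rho(b))$. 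Your route buys self-containedness: it needs no semigroup machinery, no fact extracted from inside the proof of another paper's theorem, and no separate argument that the representing measure takes values in $\cW$ rather than $B(\cH)$ (the paper needs its remark following Theorem \ref{polgrupe-omejen} for exactly this, whereas Theorem \ref{komutativne-C-star-algebre} is already stated for von Neumann algebras); the price is that regularity, compact support and uniqueness must be verified by hand, which your pushforward, Stone--Weierstrass and Riesz-representation arguments do correctly, and this replaces the uniqueness the paper inherits wholesale from the semigroup theorem. The details you gloss are all sound: inner regularity transfers because $\Phi$ maps compact sets to compact sets, outer regularity then follows by complementation for finite Borel measures on the Hausdorff space $\widehat{\cB}$, and the reduction of the uniqueness argument to $K_1\cup K_2$ uses that a regular spectral measure is concentrated on its support, so $E_i(\Delta)=E_i(\Delta\cap K_i)$ for all Borel $\Delta$.
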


\addtocounter{theorem}{-1}
\renewcommand{\thetheorem}{\arabic{section}.\arabic{theorem}}

	For every $F:=\sum_{j=1}^m b_j\otimes A_j\in \cB\otimes \cW_1$ we define a map
		$f_F(\chi):\widehat{B}\otimes \cW_1\to \cW_1$ by
			$f_F(\chi):=\sum_{j=1}^m f_{b_j}(\chi)\otimes A_j.$
The \textsl{support} of a non-negative spectral measure 
$M:\B(\widehat{\cB}) \rightarrow B(\cW_1,\cW_2)$, denoted by  $\supp(M)$, 
is the support of the spectral measure 
	$M_{\id_{\cH}}: \B(\widehat{\cB}) \rightarrow \cW_2,
		M_{\id_{\cH}}(\Delta):=M(\Delta)(\id_{\cH}).$
Using Theorem A we obtain the following solution to Problem B.
	
\renewcommand{\thetheorem}{B}\begin{theorem} \label{theorem-B} 
	The following statements are equivalent.
		\begin{enumerate}
			\item $\rho: \cB \otimes \cW_1\rightarrow \cW_2$ is a unital $\ast$-representation.
			\item There exists a unique regular normalized non-negative spectral measure 
				$M : \B(\widehat{\cB}) \rightarrow B(\cW_1,\cW_2)$ with a compact support such that 
					$\rho(F) =	\int_{\widehat{\cB}} f_F(\chi)\; dM(\chi)$
				for every $F \in \cB\otimes \cW_1$.
		\end{enumerate}
\end{theorem}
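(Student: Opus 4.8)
The plan is to prove Theorem B by bootstrapping from Theorem A, exploiting the tensor structure $\cB \otimes \cW_1$ and the fact that $\ast$-representations of it are governed by their restrictions to hermitian projections in $\cW_1$. The direction $(2)\Rightarrow(1)$ should be the routine one: given a regular normalized non-negative spectral measure $M$, one defines $\rho(F) := \int_{\widehat{\cB}} f_F(\chi)\, dM(\chi)$ and checks directly that $\rho$ is linear, multiplicative, $\ast$-preserving, and unital, using the defining identity $M_P(\Delta_1)M_Q(\Delta_2) = M_{PQ}(\Delta_1 \cap \Delta_2)$ for hermitian projections together with the normalization $M_{\id_\cH}(\widehat\cB) = \id$. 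The multiplicativity on simple tensors $b \otimes A$ reduces, after writing $A$ as a linear combination of hermitian projections, to the analogous computation for the scalar-valued pieces handled by Theorem A.

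The substantive direction is $(1)\Rightarrow(2)$. First I would fix a hermitian projection $P \in \cW_1$ and consider the map $\rho_P: \cB \to \cW_2$ defined by $\rho_P(b) := \rho(b \otimes P)$. Because $\rho$ is a unital $\ast$-representation of $\cB \otimes \cW_1$ and $P$ is a projection, $\rho_P$ is a $\ast$-representation of $\cB$ into the corner von Neumann algebra $\rho(1 \otimes P)\,\cW_2\,\rho(1 \otimes P)$; in particular it is a $\ast$-representation in the sense of Theorem A. Applying Theorem A to each $\rho_P$ yields a unique regular spectral measure $E_P: \B(\widehat\cB) \to \cW_2$ with compact support such that $\rho_P(b) = \int_{\widehat\cB} f_b(\chi)\, dE_P(\chi)$. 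The candidate non-negative spectral measure is then defined on projections by $M(\Delta)(P) := E_P(\Delta)$, and extended to all of $\cW_1$ by linearity after writing a general $A \in \cW_1$ as a complex combination of four hermitian projections (or, more carefully, using the spectral decomposition / linear span of projections, which is norm-dense in a von Neumann algebra).

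The main obstacle will be verifying the compatibility conditions that make $M$ a genuine non-negative spectral measure, namely $M_P(\Delta_1) M_Q(\Delta_2) = M_{PQ}(\Delta_1 \cap \Delta_2)$ for all hermitian projections $P, Q$ and all Borel sets $\Delta_1, \Delta_2$, together with well-definedness of the linear extension to $\cW_1$. The key identity follows from multiplicativity of $\rho$: since $(b \otimes P)(c \otimes Q) = bc \otimes PQ$ in $\cB \otimes \cW_1$, we get $\rho_P(b)\rho_Q(c) = \rho_{PQ}(bc)$, so the spectral measures $E_P, E_Q, E_{PQ}$ satisfy $\int f_b\, dE_P \cdot \int f_c\, dE_Q = \int f_{bc}\, dE_{PQ}$ for all $b, c \in \cB$; the passage from this integral identity to the set-level identity $E_P(\Delta_1)E_Q(\Delta_2) = E_{PQ}(\Delta_1 \cap \Delta_2)$ is exactly the kind of approximation argument that underlies the uniqueness and multiplicativity in Theorem A, and I expect it to require care because $\cB$ need not separate points of $\widehat\cB$ in as strong a sense as $C(X,\CC)$ does. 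Once $M$ is established as a regular normalized non-negative spectral measure, compactness of its support is inherited from the compact supports of the individual $E_P$ (in particular from $E_{\id_\cH}$, which defines $\supp(M)$), and the representation formula $\rho(F) = \int_{\widehat\cB} f_F(\chi)\, dM(\chi)$ follows by linearity from the case of simple tensors $b \otimes P$. Uniqueness of $M$ is a consequence of the uniqueness clause in Theorem A applied to each $\rho_P$.
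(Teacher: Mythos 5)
Your skeleton is exactly the paper's: restrict to $\rho_P(b):=\rho(b\otimes P)$ for hermitian projections $P\in(\cW_1)_p$, apply Theorem \ref{theorem-A} to get the family $\{E_P\}$, and assemble it into a non-negative spectral measure $M$; your corner-algebra remark justifying the application of Theorem \ref{theorem-A} is correct and even more careful than the paper. However, the two places where the real work happens are missing or misstated. First, the step the paper actually proves in detail is the containment $\supp(E_P)\subseteq\supp(E_{\id_{\cH}})$ for \emph{every} projection $P$, obtained by contradiction from the decomposition $E_{\id_{\cH}}=E_P+E_{\id_{\cH}-P}$ and the weak$^\ast$-density Lemma \ref{gostost-krogle} (which relies on \cite[p.~95]{B-M-K}; note $\cA_K$ is not handed to you as dense in $C(K)$, this is itself a cited fact). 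This containment is not about compactness of $\supp(M)$, which indeed comes for free from $E_{\id_{\cH}}$; it is what puts \emph{all} the measures $(E_P)_{k_1,k_2}$ on one fixed compact set $K=\supp(E_{\id_{\cH}})$, so that indicator functions can be reached from the functions $f_b$ by weak$^\ast$ limits in $C(K)^{\ast\ast}$ in every subsequent verification. Since the later arguments involve limiting sequences with infinitely many projections, a union of individual compact supports would not do; without this step none of the approximation arguments you allude to can be run.

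Second, your proposed verification of the product identity is ill-formed: if $P$ and $Q$ do not commute, $PQ$ is not a projection (not even self-adjoint), $b\mapsto\rho(b\otimes PQ)$ is not a $\ast$-representation, and Theorem \ref{theorem-A} produces no measure ``$E_{PQ}$''. The measure $M_{PQ}$ exists only after $\Delta\mapsto M(\Delta)$ has been extended from projections to all of $\cW_1$, by splitting $PQ$ into positive parts and taking norm limits along limiting sequences of projections (Proposition \ref{trditev-o-generiranju-pozitivne-mere}); accordingly the identity must be verified in the limiting-sequence form, which is precisely condition (3) of Theorem \ref{karakterizacija-nenegativnih-spektralnih-mer}. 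This is the route the paper takes: it checks conditions (1)--(3) of that theorem, deferring the computations to the proof of \cite[Theorem 9.1]{Zalar2014} with Lemma \ref{gostost-krogle} replacing \cite[Lemma 2.3]{Zalar2014}. Relatedly, your claim that the formula $\rho(F)=\int_{\widehat{\cB}}f_F\,dM$ ``follows by linearity from simple tensors $b\otimes P$'' is too quick: a general $A\in\cW_1$ is only a \emph{norm limit} of linear combinations of projections, so one needs norm continuity of $A\mapsto\rho(b\otimes A)$ to pass to the limit. That is Proposition \ref{trditev-o-omejenosti}, proved via positivity of $\rho_b$ on elements $a^\ast a\otimes A$ and \cite[5.12.~Corollary]{Con}, and it is nowhere established in your proposal. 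So the steps you flag as ``requiring care'' are not refinements of your sketch but genuinely different formulations plus two missing ingredients, and as written the plan does not close.
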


\addtocounter{theorem}{-1}
\renewcommand{\thetheorem}{\arabic{section}.\arabic{theorem}}
	

	Let us now study Problem C. If we concentrate only on regular spectral measures, then 
Propositions \ref{integrabilnost} in \ref{gostost-D-ja} give necessary conditions
for $\rho$ to have a representing measure. Theorem C' states, that they are also sufficient.
	
\renewcommand{\thetheorem}{C'}
\begin{theorem}\label{resitev-problem-1'}\label{theorem-C'}
	Let $\cD$ be a dense linear subspace $\cD$ of a Hilbert space $\cH$  
	and $\rho:\cB\rightarrow \cL^+(\cD)$ a $\ast$-representation.
	The following statements are equivalent.
		\begin{enumerate}
			\item $\rho$ is integrable and the set 
				$\displaystyle\cup_{K\in \scrK} D_{\alpha_K, \rho}$ is dense in $\cH$.
			\item There exists a unique regular spectral measure 
				$E:\Bor(\widehat{\cB})\to B(\cH)$, such that 
				$\supp(E_{h,h})$ is compact for every $h\in \cup_{K\in \scrK} D_{\alpha_K, \rho}$
				and the equality
					$\overline{\rho(b)}x =\int_{\widehat{\cB}}f_b(\chi)\;dE(\chi)\;x$ holds for every 
				$x\in \cD(\overline{\rho(b)})$ and all $b\in \cB$.
		\end{enumerate}
\end{theorem}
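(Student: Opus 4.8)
I would prove the two implications separately. The implication $(2)\Rightarrow(1)$ is immediate from the results already established: if $\rho$ has an integral representation by a regular spectral measure $E$, then Proposition \ref{integrabilnost} gives that $\rho$ is integrable, and Proposition \ref{gostost-D-ja} gives that $\cup_{K\in\scrK}D_{\alpha_K,\rho}$ is dense in $\cH$.

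The content is $(1)\Rightarrow(2)$, where the spectral measure must be constructed. Fix $K\in\scrK$; then $\alpha_K(b)=\sup_{\chi\in K}|f_b(\chi)|$ is a $C^\ast$-seminorm on $\cB$. For $h\in D_{\alpha_K,\rho}$ the functional $L_h(b):=\langle\overline{\rho(b)}h,h\rangle$ is well defined, since $h\in\cap_{b\in\cB}\cD(\overline{\rho(b)})$. Using integrability to justify the identity $\overline{\rho(b^\ast b)}h=\overline{\rho(b)}^\ast\overline{\rho(b)}h$ on this domain, one gets $L_h(b^\ast b)=\|\overline{\rho(b)}h\|^2\ge0$, and membership in $D_{\alpha_K,\rho}$ together with the Cauchy--Schwarz inequality for positive functionals yields $|L_h(b)|\le\alpha_K(b)\|h\|^2=\|f_b\|_{C(K,\CC)}\|h\|^2$. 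The functions $f_b$ form a unital $\ast$-subalgebra of $C(K,\CC)$ separating the points of the compact set $K$, hence a dense one by Stone--Weierstrass, so $L_h$ extends to a bounded positive functional on $C(K,\CC)$; the Riesz--Markov theorem then produces a regular Borel measure $\mu_h$ supported on $K$ with $L_h(b)=\int_K f_b\,d\mu_h$ and $\mu_h(\widehat{\cB})=L_h(1)=\|h\|^2$. The same estimate, with $K$ replaced by a finite union of compacta, yields such a $\mu_h$ for every $h$ in the dense subspace $\mathcal E:=\Lin(\cup_{K\in\scrK}D_{\alpha_K,\rho})$; note that $\mathcal E$ need not lie in a single $D_{\alpha_K,\rho}$, but boundedness of $L_h$ on some $C(K,\CC)$ is all that is needed.

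Next I would polarize, defining $E_{h_1,h_2}(\Delta):=\tfrac14\sum_{k=0}^{3}i^{k}\mu_{h_1+i^{k}h_2}(\Delta)$ for $h_1,h_2\in\mathcal E$. Sesquilinearity and the bound $|E_{h_1,h_2}(\Delta)|\le\|h_1\|\,\|h_2\|$ (coming from $\mu_h(\widehat{\cB})=\|h\|^2$) let each $E(\Delta)$ extend to a bounded operator on $\cH=\overline{\mathcal E}$ with $0\le E(\Delta)\le\id$. That $E$ is a regular, projection-valued, $\sigma$-additive spectral measure is then shown by the arguments used for Theorem \ref{theorem-A}, the decisive algebraic input being $f_{ab}=f_a f_b$ on $\widehat{\cB}$, so that products in $\cB$ match products of integrands; regularity of each $E_{h,h}=\mu_h$ is built in, and $\supp(E_{h,h})=\supp(\mu_h)\subseteq K$ is compact for $h\in D_{\alpha_K,\rho}$.

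It remains to verify the representation formula and uniqueness, which I expect to be the main obstacle. By construction $\langle\overline{\rho(b)}h,h\rangle=\int_{\widehat{\cB}}f_b\,dE_{h,h}$ for $h\in\mathcal E$, and polarization upgrades this to $\overline{\rho(b)}h=T_b h$ on $\mathcal E$, where $T_b:=\int_{\widehat{\cB}}f_b\,dE$ is closed and normal (Theorem \ref{izrek2}) with $T_b^\ast=\int_{\widehat{\cB}}f_{b^\ast}\,dE=T_{b^\ast}$ because $f_{b^\ast}=\overline{f_b}$. Since $E$ is regular, Proposition \ref{gostost-D-ja} gives $E(K)\cH\subseteq D_{\alpha_K,\rho}$, so $\cup_{K\in\scrK}E(K)\cH\subseteq\mathcal E$; as the former is a core for every spectral integral, $\mathcal E$ is a core for $T_b^\ast$. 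For $\phi\in\cD$ and $\psi\in\mathcal E$, integrability ($\rho(b)^\ast=\overline{\rho(b^\ast)}=T_{b^\ast}$ on $\mathcal E$) gives $\langle\rho(b)\phi,\psi\rangle=\langle\phi,T_b^\ast\psi\rangle$; since $\mathcal E$ is a core for $T_b^\ast$, this forces $\phi\in\cD(T_b)$ with $T_b\phi=\rho(b)\phi$, and closedness of $T_b$ extends $\overline{\rho(b)}x=T_b x$ to all $x\in\cD(\overline{\rho(b)})$. Uniqueness follows because any representing regular measure satisfies $\int f_b\,dE_{h,h}=\langle\overline{\rho(b)}h,h\rangle$ for $h$ in the dense set $\mathcal E$, which, by density of $\{f_b\}$ in the relevant $C(K,\CC)$ and regularity, determines $E_{h,h}$ and hence $E$. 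The delicate points are the product and adjoint identities for $\overline{\rho(\cdot)}$ supplied by integrability and the fact that $\mathcal E$ is a core, on which the passage to the full domain $\cD(\overline{\rho(b)})$ depends.
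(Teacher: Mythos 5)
Your direction $(2)\Rightarrow(1)$ coincides with the paper's (Propositions \ref{integrabilnost} and \ref{gostost-D-ja}). For $(1)\Rightarrow(2)$ you attempt a direct construction (positive functionals $L_h$, Stone--Weierstrass, Riesz--Markov, polarization), whereas the paper takes a reduction route: it verifies that $b\mapsto\overline{\rho(b)}$ satisfies the five conditions of a semigroup $\ast$-representation in the sense of \cite[Definition 1.1]{Ressel-Ricker2002} -- normality of each $\overline{\rho(b)}$ from integrability via \cite[Theorem 9.1.2]{Sch2}, the product and domain identities via affiliation with an abelian von Neumann algebra (\cite[Theorem 9.1.7]{Sch2}) and common bounding sequences (\cite[Theorem 5.6.15]{Kad-Rin}) -- then applies Theorem \ref{polgrupe-neomejen} and finally cuts the resulting measure down from the semigroup characters $\cB^\ast$ to the linear characters $\widehat{\cB}$. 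Your construction of the measures $\mu_h$, the polarization, and the final uniqueness argument are individually sound, but the sketch has a genuine gap exactly where the paper deploys this machinery.

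The gap: you dismiss the proof that $E$ is projection-valued, multiplicative and $\sigma$-additive with ``the arguments used for Theorem \ref{theorem-A}.'' First, the paper proves Theorem \ref{theorem-A} by citing Theorem \ref{polgrupe-omejen}, not by a direct construction, so there are no such arguments available to quote. Second, and more seriously, the standard bounded-case multiplicativity argument (compare the complex measures $\Delta\mapsto\int_\Delta f_a\,dE_{h_1,h_2}$ and $\Delta\mapsto E_{\overline{\rho(a)}h_1,h_2}(\Delta)$ using $f_{ab}=f_af_b$) breaks down in this unbounded setting, because it requires applying your construction to the vectors $\overline{\rho(a)}h$ and $E(\Delta)h$, which are not known to lie in $\mathcal{E}=\Lin(\cup_{K\in\scrK}D_{\alpha_K,\rho})$: the set $D_{\alpha_K,\rho}$ is not invariant under $\overline{\rho(a)}$, since the available bound $\|\overline{\rho(ab)}h\|\leq\alpha_K(a)\alpha_K(b)\|h\|$ is not the needed bound $\alpha_K(b)\|\overline{\rho(a)}h\|$. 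The same circularity infects your final bootstrap: you invoke Proposition \ref{gostost-D-ja} to get $E(K)\cH\subseteq D_{\alpha_K,\rho}$ (hence that $\mathcal{E}$ is a core for $T_b^\ast$), but that proposition presupposes that $E$ represents $\rho$ on the full domains $\cD(\overline{\rho(b)})$, while at that stage you only know $\overline{\rho(b)}=T_b$ on $\mathcal{E}$; for $h\in E(K)\cH$ you cannot yet conclude $h\in\cD(\overline{\rho(b)})$ at all. Filling these holes amounts to reproving \cite[Theorem 1.2]{Ressel-Ricker2002}, which is precisely the ``very technical'' work the paper's reduction avoids. (The smaller items you defer to ``integrability'' -- the identities $\overline{\rho(b^\ast b)}h=\overline{\rho(b)}^\ast\overline{\rho(b)}h$ and $\overline{\rho(a)}\,\overline{\rho(b)}\subseteq\overline{\rho(ab)}$ with domain control -- are indeed consequences of integrability, but only via the affiliated-von-Neumann-algebra argument that the paper spells out in its verification of Condition (3).)
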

	
\addtocounter{theorem}{-1}
\renewcommand{\thetheorem}{\arabic{section}.\arabic{theorem}}

		Let $\cD$ be a dense linear subspace of a Hilbert space $\cK$
	and $\rho:\cB\otimes \cW\rightarrow \cL^+(\cD)$ a $\ast$-representation. For every hermitian projection $P\in B(\cH)$ the map 
	$\rho_P:\cB \to \cL^+(\cD),$ defined by $\rho_P(b):=\rho(b\otimes P)$,
is a $\ast$-representation.	Using Theorem C' we obtain the following solution to Problem D.

\renewcommand{\thetheorem}{D}\begin{theorem}	\label{neomejen-primer-posplosen-uvod-splosna-verzija}
\label{theorem-D}
		For a $\ast$-representation $\rho:\cB\otimes \cW\rightarrow \cL^+(\cD)$ 
	the following statements are equivalent.
		\begin{enumerate}
				\item The map $\rho_P$ is integrable $\ast$-representation for every hermitian projection $P\in 
					\cW$ and the set $\cup_{K\in \scrK}\cD_{\alpha_K, \rho_{\id_{\cH}}}$ is dense in $\cK$.
				\item There exists a unique regular normalized non-negative spectral measure 
					$M:\B(\widehat{\cB})\to B(\cW,B(\cK))$
					such that 
						$\overline{\rho(F)}x= \int_{\widehat{\cB}} f_F(\chi)\; dM(\chi)\; x$
					holds for every $x\in \cD(\overline{\rho(F)})$ and all $F\in \cB\otimes \cW$.
			\end{enumerate}
	\end{theorem}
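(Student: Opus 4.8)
The plan is to run, \emph{mutatis mutandis}, the same reduction that proves Theorem \ref{theorem-B} from Theorem \ref{theorem-A}, but with Theorem \ref{theorem-C'} playing the role of Theorem \ref{theorem-A} in order to absorb the unboundedness; the organizing device is again the family of slices $\rho_P$. First I would record the preliminary fact that the restriction of $\rho$ to $1\otimes\cW$ is bounded: for self-adjoint $A\in\cW$ with $\|A\|\le 1$ the elements $1\otimes(\|A\|\,1\pm A)$ are positive in $\cB\otimes\cW$, hence map under $\rho$ to operators $\rho(1\otimes B)^+\rho(1\otimes B)\ge 0$ with $B^\ast B=\|A\|\,1\pm A$, which yields $|\langle\rho(1\otimes A)\phi,\phi\rangle|\le\|A\|\,\|\phi\|^2$ and, by polarization of the hermitian form, $\|\rho(1\otimes A)\phi\|\le\|A\|\,\|\phi\|$ on $\cD$. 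Writing $A=\re A+i\,\im A$, every $\rho(1\otimes A)$ then extends to a bounded operator $\Pi_A\in B(\cK)$; for a hermitian projection $P$ the operator $\Pi_P$ is an orthogonal projection onto a closed subspace $\cK_P:=\Pi_P\cK$. Since $(1\otimes P)(b\otimes\id_\cH)=(b\otimes\id_\cH)(1\otimes P)$, the projection $\Pi_P$ commutes with every $\rho_{\id_\cH}(b)$, and $\rho_P$ is a genuine $\ast$-representation on the dense subspace $\cD_P:=\Pi_P\cD$ of $\cK_P$, with $\rho_P(1)=\Pi_P$ acting as the identity of $\cK_P$.

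For the implication (1)$\Rightarrow$(2) I would first apply Theorem \ref{theorem-C'} to $\rho_{\id_\cH}$, which is integrable with $\cup_K\cD_{\alpha_K,\rho_{\id_\cH}}$ dense by hypothesis; this produces the unique regular spectral measure $E:\Bor(\widehat\cB)\to B(\cK)$ representing $\rho_{\id_\cH}$. Next I would transfer the density hypothesis to each slice: because $\Pi_P$ commutes with $E$, it preserves every $E(K)\cK$, and on $\cK_P$ one has $\overline{\rho_P(b)}=\overline{\rho_{\id_\cH}(b)}$, so $E(K)\cK_P=\Pi_P E(K)\cK\subseteq\cD_{\alpha_K,\rho_P}$; applying the continuous map $\Pi_P$ to the dense set $\cup_K E(K)\cK$ shows $\cup_K\cD_{\alpha_K,\rho_P}$ is dense in $\cK_P$. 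Together with the integrability of $\rho_P$ granted by (1), Theorem \ref{theorem-C'} yields a unique regular spectral measure $E_P$ representing $\rho_P$, which by uniqueness must equal $E(\cdot)\Pi_P$. I would then define $M(\Delta)(A):=E(\Delta)\Pi_A$, linear and bounded in $A$ with $\|M(\Delta)(A)\|\le\|A\|$, so that $M(\Delta)\in B(\cW,B(\cK))$.

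It remains to verify the required properties of $M$. Normalization is $M_{\id_\cH}=E$, and the regularity of each complex measure $\langle M(\Delta)(A)h_1,h_2\rangle=E_{\Pi_A h_1,h_2}(\Delta)$ follows from regularity of $E$. For the non-negative spectral measure axioms, $M_P=E_P$ is a spectral measure for each projection $P$, while the compatibility $M_P(\Delta_1)M_Q(\Delta_2)=M_{PQ}(\Delta_1\cap\Delta_2)$ reduces to $E(\Delta_1)\Pi_P E(\Delta_2)\Pi_Q=E(\Delta_1\cap\Delta_2)\Pi_{PQ}$, using that $\Pi_P$ commutes with $E$, that $E$ is multiplicative, and that $\Pi_P\Pi_Q=\overline{\rho(1\otimes PQ)}=\Pi_{PQ}$. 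For the integral representation one reduces by linearity to $F=b\otimes A$ and checks that $\int_{\widehat\cB}f_{b\otimes A}\,dM=(\int_{\widehat\cB}f_b\,dE)\,\Pi_A=\overline{\rho_{\id_\cH}(b)}\,\Pi_A$ coincides with $\overline{\rho(b\otimes A)}$ on $\cD(\overline{\rho(b\otimes A)})$. Uniqueness of $M$ follows because $M_{\id_\cH}$ and each $M_P$ are forced by the uniqueness clause of Theorem \ref{theorem-C'}, and these determine $M(\Delta)$ on the norm-dense linear span of projections in $\cW$.

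The converse (2)$\Rightarrow$(1) is the short direction: given $M$, each $M_P$ is a regular spectral measure representing $\rho_P$ through the integral formula, so Proposition \ref{integrabilnost} makes every $\rho_P$ integrable, and Proposition \ref{gostost-D-ja} applied to $\rho_{\id_\cH}$ with the regular measure $M_{\id_\cH}$ gives density of $\cup_K\cD_{\alpha_K,\rho_{\id_\cH}}$. I expect the main obstacle to lie in the unbounded bookkeeping of the forward direction: proving that the slice density genuinely transfers from $P=\id_\cH$ to all $P$, and, above all, matching $\overline{\rho(b\otimes A)}$ with $\overline{\rho_{\id_\cH}(b)}\,\Pi_A$ on the exact domain, since closures interact delicately with the bounded factor $\Pi_A$ and one must establish the equality $\cD(\overline{\rho(b\otimes A)})=\{x:\Pi_A x\in\cD(\overline{\rho_{\id_\cH}(b)})\}$ rather than a mere inclusion.
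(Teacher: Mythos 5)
Your construction is genuinely different from the paper's. The paper applies Theorem \ref{theorem-C'} to \emph{every} slice $\rho_P$, proves the support containments $\supp((E_P)_{k,k})\subseteq\supp((E_{\id_\cH})_{k,k})$, and then assembles the family $\{E_P\}_{P\in\cW_p}$ into $M$ by verifying the limiting-sequence hypotheses of Theorem \ref{karakterizacija-nenegativnih-spektralnih-mer}; you apply Theorem \ref{theorem-C'} only once, to $\rho_{\id_\cH}$, build the contractive unital $\ast$-homomorphism $\Pi\colon A\mapsto\overline{\rho(1\otimes A)}$ (your positivity/polarization argument for boundedness is correct and mirrors Proposition \ref{trditev-o-omejenosti}), and write $M(\Delta)(A)=E(\Delta)\Pi_A$ in closed form, which yields $M(\Delta)\in B(\cW,B(\cK))$ and linearity in $A$ for free. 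Your direction $(2)\Rightarrow(1)$ coincides with the paper's. If completed, your route would be cleaner; but as written it has genuine gaps at its load-bearing points.

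The main gap is the commutation $\Pi_P E(\Delta)=E(\Delta)\Pi_P$, which you assert and never prove, although everything downstream rests on it: the multiplicativity of $M_P$, the axiom $M_P(\Delta_1)M_Q(\Delta_2)=M_{PQ}(\Delta_1\cap\Delta_2)$, and the transfer of density to the slices. It does \emph{not} follow from the fact that $\Pi_P$ commutes with each $\rho_{\id_\cH}(b)$ on $\cD$, because Theorem \ref{theorem-C'} ties $E$ to these operators only through the inclusion $\overline{\rho_{\id_\cH}(b)}\subseteq\int_{\widehat\cB} f_b\,dE$, and a bounded operator commuting with a family of unbounded operators need not commute with a spectral measure that merely extends them. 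A correct fix goes through the uniqueness clause of Theorem \ref{theorem-C'}: the symmetry $U_P:=2\Pi_P-\id_\cK$ satisfies $U_P\cD=\cD$, commutes with every $\overline{\rho_{\id_\cH}(b)}$, and preserves each $D_{\alpha_K,\rho_{\id_\cH}}$ (since $\|\overline{\rho_{\id_\cH}(b)}U_Ph\|=\|U_P\overline{\rho_{\id_\cH}(b)}h\|\leq\alpha_K(b)\|U_Ph\|$), so $U_PE(\cdot)U_P$ is again a regular representing measure with the same compact-support property, whence $U_PE(\cdot)U_P=E$. Some such argument must be supplied.

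Two further steps are missing where the real unbounded bookkeeping lies. First, the reduction ``by linearity to $F=b\otimes A$'' is not legitimate as stated, since the closure of a sum is not the sum of closures; the paper needs the biadjoint argument $\overline{\rho(\sum_i b_i\otimes A_i)}=\rho(\sum_i b_i\otimes A_i)^{\ast\ast}\supseteq\sum_i\overline{\rho(b_i\otimes A_i)}$ to justify exactly this reduction. Second, the matching of $\overline{\rho(b\otimes A)}$ with $\int_{\widehat\cB} f_{b\otimes A}\,dM$, which you yourself flag as the main obstacle, is left open. Note that what the theorem requires is only the inclusion $\overline{\rho(b\otimes A)}\subseteq\int_{\widehat\cB} f_{b\otimes A}\,dM$, i.e.\ equality on $\cD(\overline{\rho(b\otimes A)})$, not the domain identity you state at the end; but even this inclusion needs work, because the integral $\mathbb{I}_M(f_b\otimes A)$ is by construction the closure of its restriction to $\cD_0=\cup_{K\in\scrK}E(K)\cK$, while $\overline{\rho(b\otimes A)}$ is the closure of a restriction to $\cD$, and connecting these two dense subspaces requires a regularity/core argument in the spirit of Lemma \ref{lema5} (approximate $\phi\in\cD$ by $E(K_n)\phi\in\cD_0$ and use $\mathbb{I}_E(f_b)E(K_n)=\mathbb{I}_E(f_b\chi_{K_n})$). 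Until these three points are filled in, the proposal is a promising outline rather than a proof.
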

	
\addtocounter{theorem}{-1}
\renewcommand{\thetheorem}{\arabic{section}.\arabic{theorem}}

	The paper is organized in the following way. In Section \ref{section3} we first introduce some theory needed throughout the paper, i.e.,
	in Subsection \ref{subsection3} we recall a theory of non-negative spectral measures and integration with respect to them, in Subsection \ref{subsection1} we repeat the integration of unbounded functions with respect to the spectral measure and finally, and in Subsection \ref{subsection2}, we present
two results on the integral representation of $\ast$-representations of a commutative semigroup with an involution. Theorems A and B are proved in Section \ref{solutions-to-A-B}, while Theorem C' in Section 
\ref{sekcija-problema-C}. Before proving Theorem D, the theory of integration 
with respect to the non-negative spectral measure has to be extended to the integration of unbounded functions. This is done in Section \ref{razsiritev-integracije}. Finally, in Section 
\ref{resitev-problema-D}, Theorem D is proved.

\section{Preliminaries} \label{section3} 

In Subsection \ref{subsection3} we present the non-negative spectral 
measures, which we introduced in \cite{Zalar2014} to prove a theorem on the integral representation of 
the $\ast$-representation of a $C^\ast$-algebra (see Theorem \ref{reprezentacijski-izrek-2}). 
In Subsection \ref{subsection1} we present a spectral theory of unbounded functions on a Hilbert space
(see \cite{Bir-Sol} or \cite{Sch2012}). In Subsection \ref{subsection2} we recall results for integral representations of $\ast$-representations of commutative semigroups with an involution from \cite{Ressel-Ricker1998}, \cite{Ressel-Ricker2002}.

\subsection{Non-negative spectral measures}
\label{subsection3}

	Let $(X,\Bor(X),\cW_1,\cW_2)$ be a measure space, i.e., $X$ is a topological space, $\Bor(X)$ a $\sigma$-algebra on $X$,
$\cW_1\subseteq B(\cH)$, $\cW_2\subseteq B(\cK)$ von Neumann algebras, 
where $B(\cH), B(\cK)$ denote
the bounded linear operators on Hilbert spaces $\cH$, $\cK$.
We denote by $\cW_p$, $\cW_+$ the subsets of all hermitian projections and all positive operators of a 
von Neumann algebra $\cW$. By a hermitian projection we mean an operator $P$, which satisfies $P=P^\ast=P^2$
and by a positive operator we mean a hermitian operator $A$, such that 
$\left\langle Ah,h\right\rangle\geq 0$ for every $h\in \cH$, where $\cH$ is a Hilbert space with an
inner product $\left\langle \cdot,\cdot\right\rangle$, such that
$\cW\subseteq B(\cH).$
\textsl{Non-negative spectral measure} $M \colon \Bor(X) \to B(\cW_1,\cW_2)$ is a set function, if for every hermitian projection $P\in (\cW_1)_p$ the set functions 
	$M_P:\Bor(X)\to \cW_2$, defined by $M_P(\Delta):=M(\Delta)(P)$,
are spectral measures and the equality 
		$M_P(\Delta_1)M_Q(\Delta_2)=M_{PQ}(\Delta_1\cap \Delta_2)$
holds for all hermitian projections $P,Q\in (\cW_1)_p$ and all sets $\Delta_1, \Delta_2\in \Bor(X)$.
Before writing a characterization of non-negative spectral measures, we need the following definition.
		
	\begin{definition}
		Let $W$ be a von Neumann algebra and $A\in W$ a hermitian operator. Let $E:\B([a,b])\to W$ be
	its representing spectral measure, i.e.,\
	$A=\int_{[a,b]} \lambda\;dE(\lambda)$ with the spectrum $\sigma(A)$ of $A$ lying in $[a,b]$.
	For $\ell\in \NN$, let $\mathcal Z_\ell=\{\lambda_0,\lambda_1,\ldots,\lambda_{n_\ell}\}$ be a partition of $[a-1,b]$, such that 
		$a-1<\lambda_{0,\ell}<a<\lambda_{1,\ell}<\ldots<\lambda_{1,n_\ell}<b$ and $\mathcal Z_\ell\subset \mathcal Z_{\ell+1}$.
	Define a sequence $S_\ell(A)$ of Riemann sums of the form
		\begin{eqnarray*} \label{Riemannove-vsote-operatorja} 
			S_\ell(A)&=&\sum_{k=1}^{n_{\ell}} \zeta_{k,\ell} (E(\lambda_{k,\ell})-E(\lambda_{k-1,\ell}))=:
				\sum_{k=1}^{n_{\ell}} \zeta_{k,\ell} R_{k,\ell},\\
		\end{eqnarray*}
	where the family $\left\{E(\lambda)\mid \lambda\in\RR\right\}$ is the resolution of identity corresponding to the spectral measure $E$
	and $\zeta_{k,\ell}\in [\lambda_{k-1,\ell},\lambda_{k,\ell}]$. 
	Let $\left|\mathcal Z_\ell\right|:=\max_{k}\left|\lambda_{k,\ell}-\lambda_{k-1,\ell}\right|$. We can choose $\left|\mathcal Z_\ell\right|$ small enough, so that
	$\|A-S_\ell(A)\|\leq \frac{1}{\ell}.$
		A sequence $S_\ell(A)$ is called a \textsl{limiting sequence} of $A$.
	\end{definition}
	
A characterization of non-negative spectral measures \cite[Theorem 8.1]{Zalar2014} is the following.

\begin{theorem}\label{karakterizacija-nenegativnih-spektralnih-mer}
	Let $(X,\Bor(X),\cW_1,\cW_2)$ be a measure space and $\left\{E_P\right\}_{P\in (\cW_1)_p}$ a family of 
	spectral measures $E_P:\Bor(X)\to \cW_2$.
	
	There is a unique non-negative spectral measure $M$ such that
		$$M_P=E_P$$
	for all hermitian projections $P\in (\cW_1)_p$ iff the following conditions hold.
		\begin{equation}\label{Pogoj1111}  
			\sum_{i=1}^{n}\lambda_i E_{P_i}(\Delta)=\sum_{j=1}^m \mu_j E_{Q_j}(\Delta)
		\end{equation}		
	for all hermitian projections $P_i,Q_j\in (\cW_1)_p$, all real numbers $\lambda_i,\mu_j \in\RR$, and all sets $\Delta\in\Bor(X)$ such that
		$\sum_{i=1}^{n}\lambda_i P_i=\sum_{j=1}^m \mu_j Q_j$,
	for each set $\Delta\in\Bor(X)$ there exists a constant $k_\Delta\in\RR^{>0}$ such that  
		\begin{equation}\label{Pogoj2222}  
			\|E_P(\Delta)\|\leq k_\Delta
		\end{equation}
	for all hermitian projections $P\in (\cW_1)_p$, and 
	for all hermitian projections $P,Q\in (\cW_1)_p$ and all sets $\Delta_1,\Delta_2\in \Bor(X)$ the 	
	equality
		\begin{equation}\label{Pogoj3333}  
			E_P(\Delta_1)E_Q(\Delta_2)=\lim_{\ell\to\infty}
			\sum_{j=0}^3 i^j \sum_{k=1}^{n_\ell}\zeta_{k,\ell,j} E_{R_{k,\ell,j}}(\Delta_1\cap \Delta_2) 
		\end{equation}	
	holds for limiting sequences $S_{\ell,j}:=\sum_{k=1}^{n_\ell} \zeta_{k,\ell,j} R_{k,\ell,j}$ 
	of the operators
		$\re(PQ)_+$, $\im(PQ)_+$, $\re(PQ)_-$, $\im(PQ)_-$
	for $j=0, 1, 2, 3$ respectively,
	where the decomposition of $PQ$ into the linear combination of positive parts is
		$PQ= \re(PQ)_+-\re(PQ)_-+i\cdot\im(PQ)_+-i\cdot\im(PQ)_-.$
\end{theorem}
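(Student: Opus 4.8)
The plan is to prove the two implications of the equivalence separately, reading it as: the existence of a non-negative spectral measure $M$ with $M_P=E_P$ is equivalent to the three conditions \eqref{Pogoj1111}, \eqref{Pogoj2222}, \eqref{Pogoj3333}. For the forward direction, suppose such an $M$ is given. Then \eqref{Pogoj1111} is immediate from the linearity of each $M(\Delta)\in B(\cW_1,\cW_2)$: applying $M(\Delta)$ to both sides of $\sum_i\lambda_i P_i=\sum_j\mu_j Q_j$ yields $\sum_i\lambda_i E_{P_i}(\Delta)=\sum_j\mu_j E_{Q_j}(\Delta)$. Condition \eqref{Pogoj2222} follows by taking $k_\Delta:=\|M(\Delta)\|$ and using $\|P\|\le 1$ for a projection, so that $\|E_P(\Delta)\|=\|M(\Delta)(P)\|\le\|M(\Delta)\|$. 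For \eqref{Pogoj3333} I would start from the defining identity $E_P(\Delta_1)E_Q(\Delta_2)=M_{PQ}(\Delta_1\cap\Delta_2)=M(\Delta_1\cap\Delta_2)(PQ)$, decompose $PQ$ into the four positive parts as in the statement, and use linearity together with the norm-continuity of $M(\Delta_1\cap\Delta_2)$ to swap $M(\Delta_1\cap\Delta_2)$ with the limit defining each limiting sequence; the resulting double limit, weighted by $i^j$, is exactly the right-hand side of \eqref{Pogoj3333}.

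For the converse, the plan is to construct $M(\Delta)\colon\cW_1\to\cW_2$ for each fixed $\Delta$ and then verify the axioms. First I would define $M(\Delta)$ on the real-linear span of projections by $M(\Delta)\big(\sum_i\lambda_i P_i\big):=\sum_i\lambda_i E_{P_i}(\Delta)$, where \eqref{Pogoj1111} guarantees well-definedness (independence of the chosen representation). The main obstacle is \textbf{boundedness}, since \eqref{Pogoj2222} only controls $E_P(\Delta)$ on projections and I must upgrade this to an estimate $\|M(\Delta)(A)\|\le C_\Delta\|A\|$. The key observation is that if $R_1,\ldots,R_n\in(\cW_1)_p$ are mutually orthogonal projections, then every partial sum $\sum_{k\in S}R_k$ is again a projection, so by \eqref{Pogoj1111} and \eqref{Pogoj2222} the partial sums $F_k:=\sum_{j\le k}E_{R_j}(\Delta)=E_{\sum_{j\le k}R_j}(\Delta)$ all satisfy $\|F_k\|\le k_\Delta$. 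Applying Abel summation to a limiting sequence $S_\ell(A)=\sum_k\zeta_{k,\ell}R_{k,\ell}$ of a positive contraction $A$ (so $0\le\zeta_{1,\ell}\le\cdots\le\zeta_{n_\ell,\ell}\le 1$) then gives $\big\|\sum_k\zeta_{k,\ell}E_{R_{k,\ell}}(\Delta)\big\|\le 2k_\Delta$, whence $\|M(\Delta)(A)\|\le 2k_\Delta\|A\|$; the general hermitian case follows by splitting into positive and negative parts, and the general case by splitting into real and imaginary parts, at the cost of a larger constant. Since the complex-linear span of projections is norm-dense in the von Neumann algebra $\cW_1$, the bounded operator $M(\Delta)$ extends uniquely and continuously to all of $\cW_1$, with range in the norm-closed space $\cW_2$, giving $M(\Delta)\in B(\cW_1,\cW_2)$.

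It then remains to check that the constructed $M$ is a non-negative spectral measure. By construction $M_P(\Delta)=M(\Delta)(P)=E_P(\Delta)$, so each $M_P=E_P$ is a spectral measure by hypothesis. For the multiplicativity axiom $M_P(\Delta_1)M_Q(\Delta_2)=M_{PQ}(\Delta_1\cap\Delta_2)$, I would expand $M_{PQ}(\Delta_1\cap\Delta_2)=M(\Delta_1\cap\Delta_2)(PQ)$ by linearity and continuity over the four-term positive-part decomposition of $PQ$, exactly reversing the computation from the forward direction; condition \eqref{Pogoj3333} then identifies this double limit with $E_P(\Delta_1)E_Q(\Delta_2)=M_P(\Delta_1)M_Q(\Delta_2)$. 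Finally, uniqueness is automatic, since any non-negative spectral measure $M'$ with $M'_P=E_P$ agrees with $M$ on all projections, hence on their span, hence on all of $\cW_1$ by continuity. I expect the Abel-summation boundedness estimate to be the only genuinely delicate step; the remaining verifications are routine consequences of linearity, norm-continuity, and the density of the span of projections.
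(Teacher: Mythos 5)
Your forward implication is fine and is the routine half: linearity and norm continuity of each $M(\Delta)\in B(\cW_1,\cW_2)$ give \eqref{Pogoj1111} and \eqref{Pogoj2222}, and, applied to the four positive parts of $PQ$ and their limiting sequences, \eqref{Pogoj3333}. The converse, however, has a genuine gap at exactly the step you yourself flag as delicate. Your Abel-summation estimate controls $M(\Delta)$ only on real combinations of \emph{mutually orthogonal} projections (such as the Riemann sums $S_\ell(A)$); it does not control $M(\Delta)$ on the linear span $V$ of all projections, which is what extension by continuity requires. A general element $B=\sum_i\lambda_i P_i$ of $V$ involves non-orthogonal, non-commuting projections, and its positive and negative parts $B_{\pm}$ are in general only norm limits of orthogonal combinations, so they do not lie in $V$: ``splitting into positive and negative parts'' exits the set on which $M(\Delta)$ has so far been defined. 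Likewise, the inference from $\|M(\Delta)(S_\ell(A))\|\leq 2k_\Delta$ to $\|M(\Delta)(A)\|\leq 2k_\Delta\|A\|$ presupposes $M(\Delta)(A)=\lim_{\ell\to\infty}M(\Delta)(S_\ell(A))$, i.e.\ precisely the norm continuity of $M(\Delta)$ that is under proof (and, for positive $A\notin V$, it presupposes that this limit exists at all, which a uniform bound does not give --- boundedness is not convergence). So the chain ``bounded on orthogonal combinations $\Rightarrow$ bounded on $V$ $\Rightarrow$ extend by density'' is circular; whether the operator norm and the ``projection norm'' $\inf\{\sum_i|\lambda_i|\colon B=\sum_i\lambda_i P_i\}$ are equivalent on $V$ is a nontrivial structural question about von Neumann algebras that your argument does not address.

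Note also that this paper does not prove the theorem: it is imported from \cite[Theorem 8.1]{Zalar2014}, and what the paper extracts from that proof, Proposition \ref{trditev-o-generiranju-pozitivne-mere}, shows the route that avoids your gap. There, conditions \eqref{Pogoj1111} and \eqref{Pogoj2222} are used to define $E_A(\Delta):=\lim_{\ell\to\infty}E_{S_\ell(A)}(\Delta)$ \emph{directly} for every positive $A\in(\cW_1)_+$; the limit exists because the partitions in a limiting sequence are nested ($\mathcal Z_\ell\subset\mathcal Z_{\ell+1}$), so that $S_\ell(A)-S_{\ell'}(A)$ is itself a combination of mutually orthogonal projections with small coefficients, whence \eqref{Pogoj1111} and \eqref{Pogoj2222} make $(E_{S_\ell(A)}(\Delta))_\ell$ a Cauchy sequence --- the same argument this paper runs in Proposition \ref{proposition3} for the unbounded integral. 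The operator $M(\Delta)$ is then defined on all of $\cW_1$ through the decomposition $A=\re(A)_+-\re(A)_-+i\cdot\im(A)_+-i\cdot\im(A)_-$, and the remaining verifications (linearity of $A\mapsto E_A(\Delta)$, the product rule via \eqref{Pogoj3333}, uniqueness) are carried out for this limit-defined object in \cite{Zalar2014}, not obtained by abstract continuous extension from the span of projections. Your multiplicativity and uniqueness arguments would survive in that framework, but the construction and boundedness step must be rebuilt along these lines.
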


From the proof of \cite[Theorem 8.1]{Zalar2014}, we extract the following proposition, which will be needed in this paper.

	\begin{proposition} \label{trditev-o-generiranju-pozitivne-mere}
				Suppose $\left\{E_P\right\}_{P\in (\cW_1)_p}$ is a family of spectral measures $E_P:\Bor(X)\to \cW_2$ satisfying conditions (1) and (2) of Theorem 
			\ref{karakterizacija-nenegativnih-spektralnih-mer}. For any positive operator $A\in \cW_1$ with a limiting sequence $S_{\ell}(A)$, 
			the measure 
				$$E_A:\Bor(X) \to \cW_2,\quad E_A(\Delta):=\lim_{\ell\to\infty}E_{S_{\ell}(A)}(\Delta),$$
			is a positive operator-valued measure.
			Moreover, the definition of $E_A$ is independent of the choice of a limiting sequence $S_{\ell}(A)$.
	\end{proposition}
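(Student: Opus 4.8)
The plan is to build $E_A$ as an operator-norm limit and to reduce every claim (existence of the limit, independence of the limiting sequence, positivity, and the measure properties) to a single Lipschitz-type bound for the linear extension of $P\mapsto E_P$. First I would give meaning to the symbol $E_{S_\ell(A)}(\Delta)$. Each Riemann sum $S_\ell(A)=\sum_{k=1}^{n_\ell}\zeta_{k,\ell}R_{k,\ell}$ is a finite real linear combination of the hermitian projections $R_{k,\ell}=E(\lambda_{k,\ell})-E(\lambda_{k-1,\ell})\in(\cW_1)_p$, which are mutually orthogonal and sum to $\id_{\cH}$. By condition (1) of Theorem \ref{karakterizacija-nenegativnih-spektralnih-mer}, the operator
\[E_{S_\ell(A)}(\Delta):=\sum_{k=1}^{n_\ell}\zeta_{k,\ell}\,E_{R_{k,\ell}}(\Delta)\]
is independent of the way $S_\ell(A)$ is written as such a combination; consequently $T\mapsto E_T(\Delta)$ is a well-defined real-linear map on the self-adjoint operators of $\cW_1$ with finite spectrum, and $E_{S_\ell(A)}(\Delta)$ is self-adjoint.

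The decisive step, which I expect to be the main obstacle, is the estimate $\|E_T(\Delta)\|\le\|T\|$ for every such $T$. Writing the spectral decomposition $T=\sum_k c_k R_k$ with orthogonal $R_k\in(\cW_1)_p$ summing to $\id_{\cH}$ and $c:=\|T\|=\max_k|c_k|$, the operator $c\,\id_{\cH}-T=\sum_k(c-c_k)R_k$ has nonnegative coefficients, so by real-linearity and positivity of the projections $E_{R_k}(\Delta)$,
\[c\,E_{\id_{\cH}}(\Delta)-E_T(\Delta)=\sum_k(c-c_k)\,E_{R_k}(\Delta)\ge 0.\]
Applying the same to $c\,\id_{\cH}+T$ yields $-c\,E_{\id_{\cH}}(\Delta)\le E_T(\Delta)\le c\,E_{\id_{\cH}}(\Delta)$, and since $E_{\id_{\cH}}$ is a spectral measure we have $0\le E_{\id_{\cH}}(\Delta)\le\id_{\cK}$, whence $\|E_T(\Delta)\|\le c=\|T\|$. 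This argument uses only condition (1) together with the fact that every $E_P$ is a spectral measure, so that $E_{R_k}(\Delta)\ge 0$ and $E_{\id_{\cH}}(\Delta)$ is a projection; condition (2) is available throughout.

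The rest is then routine. For $\ell<m$ the partitions are nested, so $S_\ell(A)-S_m(A)$ is again a finite-spectrum self-adjoint element of $\cW_1$ and the bound gives
\[\|E_{S_\ell(A)}(\Delta)-E_{S_m(A)}(\Delta)\|\le\|S_\ell(A)-S_m(A)\|\le\tfrac1\ell+\tfrac1m,\]
a bound independent of $\Delta$. Hence $(E_{S_\ell(A)}(\Delta))_\ell$ is Cauchy in the complete space $\cW_2$, uniformly in $\Delta$, which defines $E_A(\Delta)$; the same inequality applied to two limiting sequences of $A$ (which differ by at most $2/\ell$ in norm) shows the limit is independent of the chosen sequence. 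Choosing the tags $\zeta_{k,\ell}\ge 0$, which is possible since $A\ge 0$, makes each $E_{S_\ell(A)}(\Delta)$ a nonnegative combination of projections and hence positive, so $E_A(\Delta)\ge 0$.

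Finally, to see that $E_A$ is a measure I would pass to the scalar set functions $\langle E_A(\cdot)h,h\rangle$ for $h\in\cK$. They are pointwise, indeed uniform, limits of the finitely additive set functions $\langle E_{S_\ell(A)}(\cdot)h,h\rangle$, and a uniform limit of countably additive measures on a $\sigma$-algebra is countably additive, because the tail of a disjoint union is controlled uniformly in $\ell$ and vanishes for each fixed $\ell$ by countable additivity of the spectral measures $E_{R_{k,\ell}}$. Thus $E_A(\emptyset)=0$ and $E_A$ is countably additive in the weak, and hence in the strong, operator topology, with positive values; that is, $E_A$ is a positive operator-valued measure.
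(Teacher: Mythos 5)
Your proof is correct. The paper itself gives no argument for this proposition --- it is extracted, by citation, from the proof of \cite[Theorem 8.1]{Zalar2014} --- so a line-by-line comparison is not possible; but your argument follows exactly the route the statement prescribes, and your key bound $\|E_T(\Delta)\|\leq\|T\|$ for finite-spectrum self-adjoint $T$, obtained from condition (1) of Theorem \ref{karakterizacija-nenegativnih-spektralnih-mer} by sandwiching $E_T(\Delta)$ between $\pm\|T\|\,E_{\id_{\cH}}(\Delta)$, is the natural engine that makes the norm-Cauchy property, the independence of the limiting sequence, positivity, and (via the uniformity of the bound in $\Delta$) countable additivity all go through.
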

 
Let $X$ be a topological space and $\B(X)$ a Borel $\sigma$-algebra on $X$. 
Non-negative spectral measure $M$ is \textsl{regular} if the spectral measures $M_P$ are regular for every hermitian projection $P\in (\cW_1)_p$, i.e., complex measures
	$(M_P)_{k_1,k_2}:\B(X)\to \CC$, $(M_P)_{k_1,k_2}(\Delta):=\left\langle M_P(\Delta)k_1,k_2	
	\right\rangle$
are regular for every $k_1, k_2\in \cK$ and every $P\in (\cW_1)_p$. 
$M$ is \textsl{normalized} if $M(X)(\id_{\cH})=\id_{\cK}$, where $\id_\cH$, $\id_\cK$ denote the identity operators on $\cH$, $\cK$ respectively.

	A $\Bor(X)$-measurable complex function $f:X\to\CC$ is integrable with respect to a spectral measure 
$M_P:\Bor(X)\to \cW_2$ with $P\in (\cW_1)_p$, if there exists a constant $K_f\in \RR$, such that for every $k\in \cK$ we have $\int_X \left|f\right|\;d(M_{P})_{k,k}\leq K_f\|k\|^2$. A function $f$ is \textsl{$M$-integrable}, if it is $M_P$-integrable for every $P \in (W_1)_p$. Then the integral of $f$ with respect to a positive operator-valued measure $M_A$ for $A\in (\cW_1)_+$ is defined by $\int_{X}f\; dM_A:=\int_{X}f\; dM_{S_{\ell}(A)}$ for a
limiting sequence $S_{\ell}(A)$ of $A$ (it is independent from the choice of the limiting sequence). Finally, the integral $\int_{X}f\; dM_A$ of $f$
with respect to a signed operator-valued measure $M_A$ for $A\in \cW_1$ is defined by
	$\int_{X}f\; dM_{\re(A)_+}-\int_{X}f\; dM_{\re(A)_-}+
	i\cdot\int_{X}f\; dM_{\im(A)_+}-i\cdot\int_{X}f\; dM_{\im(A)_-},$
where $\re(A)$, $\im(A)$ are the real and the imaginary part of $A$ and $A_+$, $A_-$ are the positive and the negative part of $A$. The set of all $M$-integrable functions is denoted by $\cI(M)$. The map 
$\cB:\cI(M)\times \cW_1\to \cW_2$, defined by $\cB(f,A):=\int_Xf\; dM_A$, is bilinear. $\cB$ extends to a linear map $\overline{\cB}:\cI(M)\otimes \cW_1\to \cW_2$ and we define an \textsl{integral} of 
$F\in \cI(M)\otimes \cW_1$ with respect to $M$ by $\int_X F\; dM:= \overline{\cB}(F).$ The algebraic properties of an integral with respect to $M$ are given in the following proposition (see \cite[Proposition 3.5]{Zalar2014} and \cite[Proposition 7.2]{Zalar2014}).

\begin{proposition}
	For $F, G\in \cI(M)\otimes \cW_1$, $A\in \cW_1$ and $\lambda\in \CC$ we have:
	\begin{enumerate}	
		\item $\int_X (F+G)\; dM=\int_X F\; dM+\int_X G\; dM$.
		\item $\int_X \lambda F\; dM=\lambda\int_X F\; dM$.
		\item For every $\Delta\in \Bor(X)$, $\int_X \chi_{\Delta}\otimes A\; dM=M_A(\Delta)$.
		\item From $F\succeq 0$, it follows that $\int_X F\; dM\succeq 0$.
		\item $\int_X FG\; dM=(\int_X F\; dM)(\int_X G\; dM)$.
	\end{enumerate}
\end{proposition}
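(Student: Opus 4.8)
The integral $\int_X \cdot\; dM = \overline{\cB}$ is, by construction, the linear extension of the bilinear map $\cB(f,A) = \int_X f\; dM_A$, so every claim reduces to a statement about the scalar integrals $\int_X f\; dM_A$ against the signed operator-valued measures $M_A$. Items (1) and (2) then require no work: they are literally the additivity and scalar homogeneity of the linear map $\overline{\cB}$. For (3) I would unwind the definition, $\int_X \chi_\Delta\otimes A\; dM = \cB(\chi_\Delta,A) = \int_X \chi_\Delta\; dM_A$, and verify $\int_X \chi_\Delta\; dM_A = M_A(\Delta)$ in the three stages by which $M_A$ was itself built: for a projection $P$ it is the elementary spectral-measure identity $\int_X \chi_\Delta\; dM_P = M_P(\Delta)$; for a positive $A$ with limiting sequence $S_\ell(A) = \sum_k \zeta_{k,\ell} R_{k,\ell}$ I would use that each $M(\Delta)$ is linear on $\cW_1$, whence $M_{S_\ell(A)}(\Delta) = \sum_k \zeta_{k,\ell} M_{R_{k,\ell}}(\Delta)$, and pass to the limit through Proposition \ref{trditev-o-generiranju-pozitivne-mere}; and for general $A$ I would sum the four pieces of $A = \re(A)_+ - \re(A)_- + i\,\im(A)_+ - i\,\im(A)_-$.

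For (4) I would first fix the meaning of $F \succeq 0$ as pointwise positivity of the $\cW_1$-valued function represented by $F$, and approximate $F$ by simple functions $\sum_i \chi_{\Delta_i}\otimes A_i$ with $A_i\in(\cW_1)_+$. Proposition \ref{trditev-o-generiranju-pozitivne-mere} says that $M_{A_i}$ is then a positive operator-valued measure, so each elementary integral $M_{A_i}(\Delta_i) = \int_X \chi_{\Delta_i}\; dM_{A_i}$ lies in $(\cW_2)_+$; since the positive cone of $\cW_2$ is closed, the limit $\int_X F\; dM$ is positive as well.

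The substance is in (5). Using bilinearity it suffices to treat a single product of elementary tensors, that is, to prove $\int_X fg\; dM_{AB} = \left(\int_X f\; dM_A\right)\left(\int_X g\; dM_B\right)$ after observing $(f\otimes A)(g\otimes B) = fg\otimes AB$. The base case $f=\chi_{\Delta_1}$, $g=\chi_{\Delta_2}$ with projections $A=P$, $B=Q$ is immediate: by (3) the left-hand side is $M_{PQ}(\Delta_1\cap\Delta_2)$ and the right-hand side is $M_P(\Delta_1)M_Q(\Delta_2)$, and these coincide by the defining relation $M_P(\Delta_1)M_Q(\Delta_2) = M_{PQ}(\Delta_1\cap\Delta_2)$ of a non-negative spectral measure. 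I would then extend to simple $f,g$ by additivity, to general $M$-integrable $f,g$ by approximation, and finally lift the coefficients from projections to arbitrary $A,B\in\cW_1$ via their limiting sequences; the identity governing that last step is precisely \eqref{Pogoj3333}, which rewrites the product $M_P(\Delta_1)M_Q(\Delta_2)$ as a limit of projection-level products coming from limiting sequences of the positive parts of $PQ$.

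I expect the main obstacle to be the interchange of limits in (5): the multiplicativity must survive simultaneously the limiting-sequence approximations $S_\ell$ of the generally non-hermitian operators $A,B$ (whose product $AB$ forces the four-term decomposition and identity \eqref{Pogoj3333}) and the approximation of the integrable $f,g$ by simple functions. Controlling these passages relies on the uniform bound \eqref{Pogoj2222}, $\|E_P(\Delta)\|\leq k_\Delta$, to dominate the operator-valued partial sums, together with a dominated-convergence estimate for the scalar integrals. A preliminary point that should be settled first, so that the statement is even well posed, is the stability of $\cI(M)$ under the products in question, ensuring $FG\in\cI(M)\otimes\cW_1$.
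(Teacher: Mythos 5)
Your outline is correct in substance, but note that the paper itself contains no proof of this proposition: it is quoted from \cite[Propositions 3.5 and 7.2]{Zalar2014}, so the only available comparison is with the construction of the integral that the paper does spell out, and your staged argument (projections, then positive operators via limiting sequences, then the four-part decomposition $\re(A)_+-\re(A)_-+i\,\im(A)_+-i\,\im(A)_-$, with the defining relation $M_P(\Delta_1)M_Q(\Delta_2)=M_{PQ}(\Delta_1\cap\Delta_2)$ as the base case of (5)) follows that construction step for step, so it is essentially the intended route. One adjustment is needed in how you propose to control the limits: conditions \eqref{Pogoj2222} and \eqref{Pogoj3333} are \emph{hypotheses} of Theorem \ref{karakterizacija-nenegativnih-spektralnih-mer}, whose role is to build $M$ out of a family $\{E_P\}_P$; in the present proposition $M$ is already given, so invoking them is unnecessary (and circular in spirit). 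What replaces them is cheaper and stronger: by definition $M(\Delta)\in B(\cW_1,\cW_2)$ is a bounded linear operator, so $C\mapsto M_C(\Delta)=M(\Delta)(C)$ is linear and norm-continuous, which already justifies $M_{S_\ell(A)}(\Delta)\to M_A(\Delta)$ in (3) and the coefficient-lifting limit in (5); and the uniform estimate you want for the interchange of limits comes from orthogonality rather than from a summed bound of the form $\|E_P(\Delta)\|\leq k_\Delta$: for a limiting sequence $S_\ell(A)=\sum_k\zeta_{k,\ell}R_{k,\ell}$ the operators $M_{R_{k,\ell}}(\Delta)$ are mutually orthogonal projections (since $M_{R_{k,\ell}}(\Delta)M_{R_{k',\ell}}(\Delta)=M_{R_{k,\ell}R_{k',\ell}}(\Delta)=0$ for $k\neq k'$), hence $\bigl\|\sum_k\zeta_{k,\ell}M_{R_{k,\ell}}(\Delta)\bigr\|\leq\max_k\left|\zeta_{k,\ell}\right|\leq\|A\|+\tfrac{1}{\ell}$ uniformly in $\Delta$ and $\ell$. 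With that substitution your scheme, including the preliminary check that $\cI(M)$ is stable under the relevant products (automatic here, since $M$-integrability in this proposition means $M$-essential boundedness, which is preserved by products), closes all five items.
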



\subsection{Spectral integral on a Hilbert space} \label{subsection1}

We recall the integration of unbounded functions with respect to the spectral measure (see \cite{Sch2012}).
Let $X$ be a set, $\Bor(X)$ a Borel $\sigma$-algebra on $X$, $\cH$ is a Hilbert space and $E:\Bor(X)\to B(\cH)$ a spectral measure. Let $\cU$ be the set of all $\Bor(X)$-measurable functions $f:X\to \CC$. A sequence $(\Delta_n)_{n\in\NN}$ of sets $\Delta_n\in \Bor(X)$ is a \textsl{bounding sequence} for a subset $\cF$ of $\cU$ if each function $f\in \cF$ is bounded on $\Delta_n$, $\Delta_n\subseteq \Delta_{n+1}$ for $n\in\NN$, and 
$E(\cup_{n=1}^{\infty}\Delta_n)=\id_{\cH}.$

If $(\Delta_n)_{n\in\NN}$ is any bounding sequence, then by the properties of the spectral measure,
$E(\Delta_n)\preceq E(\Delta_{n+1})$ for $n\in\NN$ and $\lim_{n\to\infty}E(\Delta_n)x=x$ for $x\in\cH.$
Each finite set of functions $f_1,f_2,\ldots,f_r\in \cU$ has a bounding sequence 
	$\Delta_n:=\left\{t\in X\colon \left|f_j(t)\right|\leq n\;
		\mathrm{for}\;j=1,2,\ldots,r\right\}.$

\textsl{The spectral integral} $\II(f):=\int_X f\;dE$ of a function $f\in\cU$ is given by 
the following theorem (see \cite[Theorem 4.13]{Sch2012}).

\begin{theorem} \label{izrek3}
	Suppose that $f\in \cU$ and define 
		$$\cD(\mathbb{I}(f)):=\{x\in \cH\colon \int_X \left|f(t)\right|^2\;
		d\left\langle E(t)x,x\right\rangle<\infty\}.$$
	Let $(\Delta_n)_{n\in\NN}$ be a bounding sequence for $f$. Then we have:
	\begin{enumerate}
		\item 
			A vector $x\in \cH$ is in $\cD(\mathbb{I}(f))$ iff the sequence $(\II(f\chi_{\Delta_{n}})x)_{n
			\in\NN}$ converges in $\cH$, or equivalently, if $\sup_{n\in\NN}\|\II(f\chi_{\Delta_{n}})x
			\|<\infty$.
		\item For $x\in \cD(\II(f)),$ the limit of the sequence $(\II(f\chi_{\Delta_{n}})x)$ does not depend
			on the bounding sequence $(\Delta_n)$. There is a linear operator $\II(f)$ on $\cD(\II(f))$ defined
			by $\II(f)x=\displaystyle\lim_{n\to \infty}\II(f\chi_{\Delta_n})x$ for $x\in\cD(\II(f))$.
		\item The vector space $\cup_{n=1}^{\infty}E(\Delta_n)\cK$ is contained in $\cD(\II(f))$ and is a core for $\II(f)$.
			Furthermore,
				$E(\Delta_n)\II(f)\subseteq \II(f)E(\Delta_n)=\II(f\chi_{\Delta_n}).$
	\end{enumerate}
\end{theorem}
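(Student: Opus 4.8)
The plan is to reduce every assertion to the behaviour of the single finite positive Borel measure $E_{x,x}(\Delta):=\langle E(\Delta)x,x\rangle$ on $X$, exploiting the elementary calculus of the \emph{bounded} spectral integral, which I take as known: for bounded $\Bor(X)$-measurable $g$ the operator $\II(g)$ is bounded and normal, $g\mapsto\II(g)$ is a $\ast$-homomorphism (so $\II(gh)=\II(g)\II(h)$ and $\II(\bar g)=\II(g)^{\ast}$), and the fundamental identity $\|\II(g)x\|^2=\int_X|g|^2\,dE_{x,x}$ holds. Since a bounding sequence makes $f$ bounded on each $\Delta_n$, each $f\chi_{\Delta_n}$ is bounded measurable and $\II(f\chi_{\Delta_n})$ is a genuine bounded operator; the whole theorem is then a statement about the scalar integrals $\int_{\Delta_n}|f|^2\,dE_{x,x}$.

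For part (1), I would first record, using $\Delta_m\subseteq\Delta_n$ and the identity above applied to $g=f\chi_{\Delta_n\setminus\Delta_m}$,
$$\|\II(f\chi_{\Delta_n})x-\II(f\chi_{\Delta_m})x\|^2=\int_{\Delta_n\setminus\Delta_m}|f|^2\,dE_{x,x},\qquad \|\II(f\chi_{\Delta_n})x\|^2=\int_{\Delta_n}|f|^2\,dE_{x,x}.$$
Because $\Delta_n\uparrow$ and $E(\cup_n\Delta_n)=\id_{\cH}$ forces $E_{x,x}(X\setminus\cup_n\Delta_n)=0$, monotone convergence gives $\sup_n\int_{\Delta_n}|f|^2\,dE_{x,x}=\int_X|f|^2\,dE_{x,x}$. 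Hence $\sup_n\|\II(f\chi_{\Delta_n})x\|<\infty$ exactly when $x\in\cD(\II(f))$; and in that case the tail $\int_{\Delta_n\setminus\Delta_m}|f|^2\,dE_{x,x}$ tends to $0$, so the sequence is Cauchy in the complete space $\cH$ and converges. The reverse implication (convergence $\Rightarrow$ bounded) is immediate.

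For part (2) I would define $\II(f)x:=\lim_n\II(f\chi_{\Delta_n})x$; linearity of $\cD(\II(f))$ and of $\II(f)$ follow from the triangle inequality in the identity above and from linearity of the bounded operators under the limit. For independence of the bounding sequence, given for $x\in\cD(\II(f))$ a second bounding sequence $(\Delta_n')$, I would check that $\Gamma_n:=\Delta_n\cap\Delta_n'$ is again a bounding sequence (here $E((\cup_n\Delta_n)\cap(\cup_n\Delta_n'))=E(\cup_n\Delta_n)E(\cup_n\Delta_n')=\id_{\cH}$), and then estimate $\|\II(f\chi_{\Delta_n})x-\II(f\chi_{\Gamma_n})x\|^2\le\int_{X\setminus\Gamma_n}|f|^2\,dE_{x,x}\to0$ by dominated convergence on the decreasing sets $X\setminus\Gamma_n$ (finite total integral since $x\in\cD(\II(f))$); symmetry yields equality of the two limits.

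Part (3) is where the real work lies. That $\cup_nE(\Delta_n)\cH\subseteq\cD(\II(f))$ follows from $E_{E(\Delta_m)x,E(\Delta_m)x}(\Delta)=E_{x,x}(\Delta\cap\Delta_m)$, whence $\int_X|f|^2\,dE_{E(\Delta_m)x,E(\Delta_m)x}=\int_{\Delta_m}|f|^2\,dE_{x,x}<\infty$ since $f$ is bounded on $\Delta_m$; the intertwining relations then drop out of $\II(f\chi_{\Delta_m})E(\Delta_n)=\II(f\chi_{\Delta_{\min(m,n)}})$ on passing to the limit in $m$. The hard part will be the core claim, for which I first need $\II(f)$ to be closed: if $x_k\to x$ and $\II(f)x_k\to y$, applying the bounded projection $E(\Delta_n)$ gives $\II(f\chi_{\Delta_n})x=E(\Delta_n)y$, so $\sup_n\|\II(f\chi_{\Delta_n})x\|\le\|y\|$ and part (1) delivers $x\in\cD(\II(f))$ with $\II(f)x=y$. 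With closedness in hand, for arbitrary $x\in\cD(\II(f))$ the vectors $x_n:=E(\Delta_n)x$ lie in $\cup_mE(\Delta_m)\cH$, satisfy $x_n\to x$, and $\II(f)x_n=E(\Delta_n)\II(f)x\to\II(f)x$ because $E(\Delta_n)\to\id_{\cH}$ strongly; thus the graph of the restriction is dense in the graph of $\II(f)$, i.e.\ $\cup_nE(\Delta_n)\cH$ is a core. I expect the only delicate points to be the bounding-sequence bookkeeping in part (2) and securing closedness directly through the projections $E(\Delta_n)$ rather than through the adjoint formula $\II(f)^{\ast}=\II(\bar f)$.
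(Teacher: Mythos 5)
Your proof is correct; note, however, that the paper does not prove this theorem at all—it is quoted verbatim from the literature (Schm\"udgen, \cite[Theorem 4.13]{Sch2012}), so there is no internal proof to compare against. Your argument—reduction to the scalar measures $E_{x,x}$, the identity $\|\II(g)x\|^2=\int_X|g|^2\,dE_{x,x}$ for bounded $g$, monotone/dominated convergence for parts (1) and (2), and closedness of $\II(f)$ via the projections $E(\Delta_n)$ followed by graph density for the core claim in part (3)—is essentially the standard proof given in that cited source, and all the steps (including the bounding-sequence independence via $\Gamma_n=\Delta_n\cap\Delta_n'$ and the intertwining relation $\II(f\chi_{\Delta_m})E(\Delta_n)=\II(f\chi_{\Delta_{\min(m,n)}})$) check out.
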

$\cD(\mathbb{I}(f))$ from Theorem \ref{izrek3} is the \textsl{domain} of the operator $\II(f)$.
By $\overline{A}$ we denote the closure of a closable linear operator $A$, defined on a dense subspace of a Hilbert space $\cH$.
The main algebraic properties of the map $f\to\II(f)$ are given in the following theorem (see
\cite[Theorem 4.16]{Sch2012}).

\begin{theorem} \label{izrek2}
	For $f,g\in \cU$ and $\alpha,\beta\in\CC$ we have:
		\begin{enumerate}
			\item $\II(\overline{f})=\II(f)^\ast$,
			\item $\II(\alpha f+ \beta g)=\overline{\alpha \II(f)+\beta \II(g)}$,
			\item $\II(fg)=\overline{\II(f)\II(g)},$
			\item $\II(f)$ is a normal operator on $\cK$, and 
				$\II(f)^\ast\II(f)=\II(\overline{f}f)=\II(f)\II(f)^\ast,$
			\item $\cD(\II(f)\II(g))=\cD(\II(g))\cap \cD(\II(fg))$.
		\end{enumerate}
\end{theorem}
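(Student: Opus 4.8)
The plan is to bootstrap everything from the bounded case together with the truncation structure of Theorem \ref{izrek3}. Writing $\Delta_n:=\{t\in X\colon |f(t)|\le n\}$ (or a joint bounding sequence when several functions occur) and $\cD_0:=\cup_{n}E(\Delta_n)\cH$, Theorem \ref{izrek3}(iii) tells me that $\cD_0$ is a core for each operator in sight and that $E(\Delta_n)\II(f)\subseteq\II(f)E(\Delta_n)=\II(f\chi_{\Delta_n})$. On bounded measurable functions the map $g\mapsto\II(g)$ is a $\ast$-homomorphism into the bounded operators with $\|\II(g)\|\le\sup|g|$, $\II(\bar g)=\II(g)^\ast$ and $\II(g_1g_2)=\II(g_1)\II(g_2)$; this classical spectral calculus is my base case. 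The single computational identity that drives the rest is, for $x\in\cD(\II(g))$, the equality $\langle E(\Delta)\II(g)x,\II(g)x\rangle=\int_\Delta|g|^2\,d\langle E(t)x,x\rangle$, obtained by truncating with $\chi_{\Delta_n}$ and passing to the limit by monotone convergence. I would establish this first.

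For (1) I would show $\II(\bar f)=\II(f)^\ast$ by a two-sided inclusion. The inclusion $\II(\bar f)\subseteq\II(f)^\ast$ follows because for $x\in\cD(\II(\bar f))$ and $y\in\cD(\II(f))$ the truncated bounded identity $\langle\II(f\chi_{\Delta_n})y,x\rangle=\langle y,\II(\bar f\chi_{\Delta_n})x\rangle$ passes to the limit on both sides. For the reverse inclusion, given $x\in\cD(\II(f)^\ast)$ with $\II(f)^\ast x=z$, testing against $y=E(\Delta_n)w$ yields $\II(\bar f\chi_{\Delta_n})x=E(\Delta_n)z$, so $\sup_n\|\II(\bar f\chi_{\Delta_n})x\|\le\|z\|<\infty$, and Theorem \ref{izrek3}(i) places $x\in\cD(\II(\bar f))$. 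Part (4) then drops out: the domains of $\II(f)$ and $\II(\bar f)=\II(f)^\ast$ coincide since both are cut out by $\int|f|^2\,d\langle E(t)x,x\rangle<\infty$, and $\|\II(f)x\|^2=\int|f|^2\,d\langle E(t)x,x\rangle=\|\II(f)^\ast x\|^2$, so $\II(f)$ is normal; the identities $\II(f)^\ast\II(f)=\II(\bar ff)=\II(f)\II(f)^\ast$ follow from (3) once one notes that $\II(f)^\ast\II(f)$ is already closed (von Neumann) and $\II(\bar ff)$ is self-adjoint.

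For (5) I start from the definitional domain $\cD(\II(f)\II(g))=\{x\in\cD(\II(g))\colon\II(g)x\in\cD(\II(f))\}$ and feed the measure identity above into the membership criterion $\int|f|^2\,d\langle E(t)\II(g)x,\II(g)x\rangle<\infty$, rewriting it as $\int|f|^2|g|^2\,d\langle E(t)x,x\rangle=\int|fg|^2\,d\langle E(t)x,x\rangle<\infty$, i.e. $x\in\cD(\II(fg))$; this gives exactly $\cD(\II(f)\II(g))=\cD(\II(g))\cap\cD(\II(fg))$. With (5) in hand, (2) and (3) are closure statements proved by one template: on $\cD_0$ (with a joint bounding sequence) the bounded calculus gives $\alpha\II(f)+\beta\II(g)=\II(\alpha f+\beta g)$ and $\II(f)\II(g)=\II(fg)$ pointwise, while the honest inclusions $\alpha\II(f)+\beta\II(g)\subseteq\II(\alpha f+\beta g)$ (using $|\alpha f+\beta g|^2\le 2|\alpha|^2|f|^2+2|\beta|^2|g|^2$) and $\II(f)\II(g)\subseteq\II(fg)$ (from (5)) hold on the full product domains; since the right-hand operators are closed and $\cD_0$ is a core for them contained in the product domain, squeezing between the closure of the restriction to $\cD_0$ and the closed right-hand side forces equality of the closures.

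The main obstacle is bookkeeping rather than any single deep idea: one must keep the domains straight and repeatedly justify interchanging the limit $n\to\infty$ with inner products and integrals, as well as check that the range of $\II(g\chi_{\Delta_n})$ lands in $E(\Delta_n)\cH\subseteq\cD(\II(f))$ so that $\cD_0$ genuinely sits inside the product domain. The one genuinely load-bearing analytic step is the measure identity $\langle E(\Delta)\II(g)x,\II(g)x\rangle=\int_\Delta|g|^2\,d\langle E(t)x,x\rangle$, since it converts the abstract condition ``$\II(g)x\in\cD(\II(f))$'' into the concrete integrability condition defining $\cD(\II(fg))$, and thereby unlocks (5), which in turn is what pins down the product domain in (3).
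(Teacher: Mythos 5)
This theorem is not proved in the paper at all: it is quoted verbatim from \cite[Theorem 4.16]{Sch2012} as a preliminary result, so there is no in-paper proof to compare against. Your argument is correct and coincides with the standard proof in that reference --- the measure-transformation identity $\langle E(\Delta)\II(g)x,\II(g)x\rangle=\int_\Delta |g|^2\, d\langle E(t)x,x\rangle$ unlocking (5), truncation/core squeezing for (2) and (3), and part (1) combined with von Neumann's theorem giving the normality and the product identities in (4).
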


	To emphasize with respect to which spectral measure we integrate, we will denote the integral
	$\II(f)$ with respect to $E$ by $\II_E(f)$.

\subsection{$\ast$-representations of commutative semigroups with an involution}
\label{subsection2}

	In this subsection we present integral representation theorems \cite[Theorem 2]{Ressel-Ricker1998} and \cite[Theorem 1.2.]{Ressel-Ricker2002} for $\ast$-representations of a commutative semigroup $S$ with identity element $e$ and an involution $\ast$ (i.e., $(s^\ast)^\ast=s$ and $(st)^\ast=s^\ast t^\ast$ for all $s,t\in S$). A function $\chi:S\to\CC$ which satisfies $\chi(e)=1$ and $\chi(st^\ast)=\chi(s)\overline{\chi(t)}$ for all $s,t\in S$ is called a \textsl{character} of $S$. By 
$S^\ast$ we denote the set of all characters of $S$.  The set $S^\ast$ is a completely regular space when equipped with the topology of pointwise convergence from $\CC^S$. Let $\cH$ be a Hilbert space
and $\cW\subseteq B(\cH)$. The map $\rho:S\to \cW$ is a $\ast$-represesntation of $S$ if $\rho(e)=\id_\cH$ and 
$\rho(st^\ast)=\rho(s)\rho(t)^\ast$ for all $s,t\in S$. For $s\in S$, the function
$f_s:S^\ast\to \CC$ is defined by $f_s(\chi)=\chi(s).$ \cite[Theorem 2]{Ressel-Ricker1998} is
the following result.

\begin{theorem} \label{polgrupe-omejen}
	Let $\rho:S\to \cW$ be a $\ast$-representation of a semigroup $S$. Then there exists a unique regular spectral measure $E:\Bor(S^\ast)\to \cW$ with a compact support, such that
		$\rho(s)=\int_{S^\ast} f_s(\chi)\;dE(\chi)$
	for all $s\in S$.
\end{theorem}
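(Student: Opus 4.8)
The plan is to reduce the statement to the commutative $C^\ast$-algebra case already recorded in Theorem~\ref{komutativne-C-star-algebre}, and then transport the resulting spectral measure from a Gelfand spectrum onto $S^\ast$ via a continuous change of variables. First I would extract the algebraic consequences of the defining identity $\rho(st^\ast)=\rho(s)\rho(t)^\ast$. Taking $s=e$ gives $\rho(t^\ast)=\rho(t)^\ast$, and hence $\rho(st)=\rho\big(s(t^\ast)^\ast\big)=\rho(s)\rho(t^\ast)^\ast=\rho(s)\rho(t)$. Since $S$ is commutative, the family $\{\rho(s):s\in S\}\subseteq\cW$ is commuting, closed under adjoints (so each $\rho(s)$ is normal), and multiplicative. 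Let $\cA\subseteq\cW$ denote the unital commutative $C^\ast$-algebra it generates, and let $\Omega:=\widehat{\cA}$ be its Gelfand spectrum, a compact Hausdorff space, with Gelfand transform $a\mapsto\widehat a\in C(\Omega,\CC)$.

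Next I would apply Theorem~\ref{komutativne-C-star-algebre} to the unital $\ast$-representation obtained by composing the inverse Gelfand isomorphism $C(\Omega,\CC)\xrightarrow{\ \sim\ }\cA$ with the inclusion $\cA\hookrightarrow\cW$. This produces a unique regular normalized spectral measure $\widetilde E:\B(\Omega)\to\cW$ with $a=\int_\Omega\widehat a\,d\widetilde E$ for all $a\in\cA$; in particular $\rho(s)=\int_\Omega\widehat{\rho(s)}\,d\widetilde E$. I then define $\Phi:\Omega\to S^\ast$ by $\Phi(\omega)(s):=\omega(\rho(s))=\widehat{\rho(s)}(\omega)$. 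Using that each $\omega\in\Omega$ is a $\ast$-homomorphism together with the relations above, one checks $\Phi(\omega)(e)=1$ and $\Phi(\omega)(st^\ast)=\Phi(\omega)(s)\overline{\Phi(\omega)(t)}$, so $\Phi(\omega)\in S^\ast$; and since $\omega\mapsto f_s(\Phi(\omega))=\omega(\rho(s))$ is continuous for each $s$, the map $\Phi$ is continuous for the topology of pointwise convergence on $S^\ast$.

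I would then set $E:=\Phi_\ast\widetilde E$, that is $E(\Delta):=\widetilde E(\Phi^{-1}(\Delta))$ for $\Delta\in\B(S^\ast)$, which is again a spectral measure into $\cW$ with $E(S^\ast)=\widetilde E(\Omega)=\id_{\cH}$, supported on the compact set $\Phi(\Omega)$. The change-of-variables formula for spectral integrals yields $\int_{S^\ast}f_s\,dE=\int_\Omega f_s\circ\Phi\,d\widetilde E=\int_\Omega\widehat{\rho(s)}\,d\widetilde E=\rho(s)$, which is the desired representation with compact support. For uniqueness I would invoke Stone--Weierstrass: by $f_e=1$, $\overline{f_s}=f_{s^\ast}$ and $f_sf_t=f_{st}$, the linear span of $\{f_s:s\in S\}$ is a unital conjugation-closed subalgebra of $C(K,\CC)$ that separates points of any compact $K\subseteq S^\ast$ (points of $S^\ast$ are distinguished exactly by the values $f_s$), hence dense in $C(K,\CC)$. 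Two regular spectral measures of compact support agreeing on every $\int f_s\,dE$ therefore agree on all of $C(K,\CC)$ and coincide by the uniqueness in the Riesz representation theorem (equivalently, by the uniqueness clause of Theorem~\ref{komutativne-C-star-algebre} on $K$).

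The step I expect to be the main obstacle is verifying that the pushforward $E=\Phi_\ast\widetilde E$ is genuinely \emph{regular} on the merely completely regular (and non-compact) space $S^\ast$: the regularity of each scalar measure $\widetilde E_{h,h}$ on the compact space $\Omega$ must be transferred through $\Phi$ by inner approximation with compact sets, exploiting that $\Phi(\Omega)$ is compact even though $\Phi$ need not be injective and $S^\ast$ carries no compactness a priori. A secondary technical point is justifying the change-of-variables identity for the spectral integrals, which I would establish first on characteristic functions $\chi_\Delta=\chi_{\Phi^{-1}(\Delta)}\circ\Phi$ and then extend by linearity and the functional-calculus properties recorded in Theorem~\ref{izrek2}.
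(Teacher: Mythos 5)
Your proof is correct, but it takes a genuinely different route from the paper --- in fact the paper never proves this statement itself: Theorem \ref{polgrupe-omejen} is imported verbatim from \cite[Theorem 2]{Ressel-Ricker1998} (proved there by the theory of positive definite functions on semigroups), and the paper's only own contribution is the Remark that follows it, which upgrades the values of $E$ from $B(\cH)$ to $\cW$ by extending $\rho$ continuously to $C(K)$ (via the density statement from \cite[p.~95]{B-M-K}) and then appealing to Theorem \ref{komutativne-C-star-algebre} together with uniqueness. You instead reduce directly to Theorem \ref{komutativne-C-star-algebre}: the relations $\rho(t^\ast)=\rho(t)^\ast$ and $\rho(st)=\rho(s)\rho(t)$ make $\rho(S)$ a commuting, adjoint-closed family, so it generates a commutative unital $C^\ast$-subalgebra $\cA\subseteq\cW$, and the regular spectral measure living on the Gelfand spectrum $\Omega$ is pushed forward along the continuous evaluation map $\Phi:\Omega\to S^\ast$. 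The technical points you flag do go through: inner regularity transfers because compact subsets of $\Phi^{-1}(\Delta)$ have compact images inside $\Delta$, outer regularity is then automatic for a finite measure concentrated on the compact set $\Phi(\Omega)$, and uniqueness follows from Stone--Weierstrass (the span of the $f_s$, restricted to a common compact support, is a unital, conjugation-closed, point-separating subalgebra of $C(K,\CC)$) plus Riesz uniqueness. Your approach buys two things: it makes explicit, in the direction needed here, the equivalence between the semigroup theorem and Theorem \ref{komutativne-C-star-algebre} that the introduction only mentions in passing; and since Theorem \ref{komutativne-C-star-algebre} already allows von Neumann algebra targets, your measure lands in $\cW$ from the outset, making the paper's Remark superfluous. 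What the citation route buys is brevity, plus access to the Ressel--Ricker machinery (notably the ``support lies on the linear characters'' argument of their Theorem 1), which the paper needs again anyway in the proofs of Theorems \ref{theorem-A} and \ref{theorem-C'}.
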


\begin{remark}
	Originally, in \cite[Theorem 2]{Ressel-Ricker1998}, a von Neumann algebra $\cW$ is $B(\cH)$. Now we 	
	explain, why we can replace $B(\cH)$ by $\cW$. By \cite[Theorem 2]{Ressel-Ricker1998}, 
	$\rho:S\to \cW\subseteq B(\cH)$ is represented by	a unique regular spectral measure 
	$E:\Bor(S^\ast)\to B(\cH)$ with a compact support $K$.
	Define the sets $\cA_K=\left\{(f_b)|_K\in C(K)\colon b\in \cB\right\}$ and
	$B_{\cA_K}:=\left\{f\in \cA_K\colon \|f\|_{\infty}\leq 1\right\}$.
	By \cite[p.\ 95]{B-M-K}, $B_{\cA_K}$ is dense in the ball of $C(K)$ equipped with a supremum norm.
	Therefore, $\rho$ can be extended by the continuity to the $\ast$-representation
	$\tilde\rho:C(K)\to \cW$. By Theorem \ref{komutativne-C-star-algebre}, $\tilde\rho$
	is represented by	a unique regular spectral measure 
	$\tilde E:\Bor(K)\to \cW$. From the uniqueness, $E=\tilde E$ and thus, 
	$E:\Bor(S^\ast)\to \cW$ maps into $\cW$ as desired.	
\end{remark}

Now we explain the extension of Theorem \ref{polgrupe-omejen} to a $\ast$-representation with a range in (not necessarily bounded) normal operators. 
A function $\alpha:S\to [0,\infty)$ is an \textsl{absolute value} if 
	$\alpha(s^\ast)=\alpha(s)$, $\alpha(e)=1$, and $\alpha(st)\leq \alpha(s)
	\alpha(t)$ for all $s,t\in S$. The family of all absolute values is denoted by $\cA(S)$.
	Given a map $\rho:S\to \cN(\cH)$, where $\cN(\cH)$ is a vector space of all normal (not necessarily bounded) operators on a Hilbert space $\cH$ and $\alpha\in \cA(S)$, define the set
	$D_{\alpha}:=\{h\in \bigcap_{s\in S} \cD(\rho(s))\colon
		\|\rho(s)h\|\leq \alpha(s)\|h\|\;
		\mathrm{ for}\;\mathrm{ all }\; s\in S\},$
where $\cD(\rho(s))$ denotes the domain of $\rho(s)$.
By \cite[Definition 1.1.]{Ressel-Ricker2002}, a map $\rho:S\to \cN(\cH)$ is called a \textsl{$\ast$-representation}, if:
	\benu
		\item $\rho(e)=\id_\cH$, where $\id_\cH$ is the identity operator on $\cH$.
		\item $\rho(s^{\ast})=\rho(s)^\ast$, $s\in S$.
		\item	$\rho(t)\rho(s)\subseteq \rho(st)$ with 
			$\cD(\rho(t)\rho(s))=\cD(\rho(st))\cap \cD(\rho(s))$, $s,t\in S$.
		\item	$\overline{\rho(t)\rho(s)}=\rho(st),$ $s,t\in S$.
		\item $D_c:=\bigcup_{\alpha\in \cA(S)}D_\alpha$ is dense in $\cH$.
	\eenu
\cite[Theorem 1.2.]{Ressel-Ricker2002} is the following result.

\begin{theorem} \label{polgrupe-neomejen}
		Assume the notation above. Let $\rho:S\to \cN(\cH)$ be a $\ast$-rep\-re\-sen\-tation.
	Then there exists a unique regular spectral measure $E:\B(S^\ast)\to B(\cH)$,
	such that $\supp(E_{h,h})$ is compact iff $h\in \cD_c$
	and
		$\rho(s)x=\int_{S^\ast}f_s(\chi)\;dE(\chi)\;x$
	for every $x\in \cD(\rho(s))$ and all $s\in S$.
\end{theorem}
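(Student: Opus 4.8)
The plan is to reduce everything to the bounded representation theorem (Theorem \ref{polgrupe-omejen}) by restricting $\rho$ to subspaces on which it acts boundedly, and then to glue the resulting spectral measures and extend by continuity, exactly as sketched for \cite{Ressel-Ricker2002} in the introduction. The starting observation is that the family $\cA(S)$ of absolute values is directed under the pointwise maximum: if $\alpha,\beta\in\cA(S)$ then $\max(\alpha,\beta)\in\cA(S)$, and $D_\alpha\cup D_\beta\subseteq D_{\max(\alpha,\beta)}$, so $\{D_\alpha\}$ is a directed family with union $D_c$. Each $D_\alpha$ is a scalar cone, and the associated set of characters $K_\alpha:=\{\chi\in S^\ast:\ |f_s(\chi)|\le\alpha(s)\text{ for all }s\in S\}$ is compact, being a closed subset of the product $\prod_{s\in S}\{z\in\CC:|z|\le\alpha(s)\}$ in the topology of pointwise convergence.

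For a fixed $\alpha$ I would first manufacture, out of $D_\alpha$, a closed reducing subspace $\cH_\alpha\subseteq\cH$ together with a genuine bounded $\ast$-representation $\rho_\alpha:S\to B(\cH_\alpha)$ satisfying $\|\rho_\alpha(s)\|\le\alpha(s)$ and $\rho_\alpha(s)=\overline{\rho(s)}\upharpoonright_{\cH_\alpha}$. Applying Theorem \ref{polgrupe-omejen} to $\rho_\alpha$ then yields a unique regular spectral measure $E_\alpha:\Bor(S^\ast)\to B(\cH_\alpha)$ with compact support contained in $K_\alpha$, representing $\rho_\alpha$. The gluing step uses the uniqueness clause of Theorem \ref{polgrupe-omejen}: whenever $\alpha\le\beta$, so that $\cH_\alpha\subseteq\cH_\beta$ by construction, the restriction of $E_\beta$ to $\cH_\alpha$ also represents $\rho_\alpha$, whence $E_\beta(\Delta)\upharpoonright_{\cH_\alpha}=E_\alpha(\Delta)$; thus the projections $E_\alpha(\Delta)$ form a coherent net. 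Since $\bigcup_\alpha\cH_\alpha\supseteq D_c$ is dense, I can define $E(\Delta)$ on this dense union as the common value of the $E_\alpha(\Delta)$ and extend by continuity to all of $\cH$; coherence together with the uniform bound $\|E_\alpha(\Delta)\|\le1$ guarantees that the strong limit $E(\Delta)$ is a well-defined orthogonal projection, that $E$ is a spectral measure, and that $E$ is regular because each $E_\alpha$ is.

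It then remains to verify the three conclusions. For the representation formula, given $x\in\cD(\rho(s))$ one approximates $x$ by vectors in $\bigcup_\alpha\cH_\alpha$ and passes to the limit in $\rho(s)x=\II_E(f_s)x$, using Theorem \ref{izrek3} and the fact that on each $\cH_\alpha$ the formula already holds from $E_\alpha$. For the support equivalence, if $h\in D_\alpha$ then $h\in\cH_\alpha$ and $\supp(E_{h,h})\subseteq K_\alpha$ is compact; conversely, if $\supp(E_{h,h})$ is compact one reads off an admissible $\alpha$ from $\sup_{\chi\in\supp(E_{h,h})}|f_s(\chi)|$, so that $\|\rho(s)h\|^2=\int|f_s|^2\,dE_{h,h}\le\alpha(s)^2\|h\|^2$ and hence $h\in D_\alpha\subseteq D_c$. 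Uniqueness of $E$ follows either from the uniqueness on each $\cH_\alpha$ plus density, or directly from Stone--Weierstrass: on each compact $K$ the functions $f_s,\overline{f_s}$ separate points and contain the constant $f_e=1$, and the moments $\int f_s\overline{f_t}\,dE_{h,h}=\langle\rho(st^\ast)h,h\rangle$ are prescribed by $\rho$, so the compactly supported regular measures $E_{h,h}$ are determined.

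The main obstacle is the first construction: turning the merely scalar-homogeneous (non-additive) set $D_\alpha$ into a closed subspace carrying a bona fide bounded $\ast$-representation. One must show that the closure of a suitable span of $D_\alpha$ is invariant and reducing for every $\overline{\rho(s)}$, that $\overline{\rho(s)}$ is then bounded there with norm at most $\alpha(s)$, and that the semigroup relations $\rho_\alpha(st)=\rho_\alpha(s)\rho_\alpha(t)$ and $\rho_\alpha(s^\ast)=\rho_\alpha(s)^\ast$ survive the restriction --- all while respecting the domain identities in items (3) and (4) of the definition of a $\ast$-representation into $\cN(\cH)$. This is precisely the delicate, technical core of \cite{Ressel-Ricker2002}, and the coherence of the family $\{E_\alpha\}$ required for the gluing is its immediate but nontrivial consequence.
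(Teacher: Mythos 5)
First, a point of comparison: the paper does not prove this statement at all. Theorem \ref{polgrupe-neomejen} is quoted verbatim as \cite[Theorem 1.2]{Ressel-Ricker2002}, and the only thing the paper offers about its proof is the one-sentence description in the introduction --- reduction to the bounded case \cite[Theorem 2]{Ressel-Ricker1998}, followed by extension by continuity from the dense subspace $D_c$. Your outline follows exactly that strategy, so in spirit you are reconstructing the cited proof rather than proposing an alternative route.

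As a proof, however, your proposal has a genuine gap, and you name it yourself: everything rests on ``manufacturing'' from $D_\alpha$ a closed reducing subspace $\cH_\alpha$ on which $\overline{\rho(s)}$ restricts to a bounded $\ast$-representation with $\|\rho_\alpha(s)\|\le\alpha(s)$, and this step is not carried out --- you defer it to \cite{Ressel-Ricker2002}, i.e.\ to the very theorem being proved, which makes the argument circular. The step is not routine: $D_\alpha$ is cut out by the non-linear conditions $\|\rho(s)h\|\le\alpha(s)\|h\|$, so a priori it is only closed under scalar multiplication, and your fallback (``the closure of a suitable span of $D_\alpha$'') does not work as stated, because a linear combination of vectors of $D_\alpha$ need not satisfy any of those norm bounds, so boundedness of $\rho(s)$ on that closed span is precisely what remains unproven. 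The actual fact established in \cite{Ressel-Ricker2002} --- using normality, commutativity and the domain identities (3)--(4) via spectral-subspace arguments --- is that $D_\alpha$ itself is a closed invariant subspace; nothing in your proposal substitutes for that. Two secondary gaps of the same nature: the coherence claim $E_\beta(\Delta)\upharpoonright_{\cH_\alpha}=E_\alpha(\Delta)$ requires first showing that $\cH_\alpha$ is invariant under every $E_\beta(\Delta)$ before the uniqueness clause of Theorem \ref{polgrupe-omejen} can be invoked on $\cH_\alpha$; and passing from the representation formula on $\bigcup_\alpha\cH_\alpha$ to all $x\in\cD(\rho(s))$ requires $D_c$ to be a core for the closed operator $\rho(s)$ (density of $D_c$ in $\cH$ is not enough), which is again part of the technical content being deferred.
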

 
\section{Proofs of Theorems A and B} \label{solutions-to-A-B}

%

\begin{proof}[Proof of Theorem \ref{theorem-A}]
		The non-trivial direction is $(1)\Rightarrow (2)$. In particular, $\cB$ is a commutative semigroup 
	with an involution. By Theorem \ref{polgrupe-omejen}, there exists a unique regular
	spectral measure $\tilde E : \B(\cB^{\ast}) \rightarrow \cW$ with a compact support such 
	that 
	$\rho(b) =	\int_{\cB^\ast} f_b(\chi)\; d\tilde E(\chi)$ for every $b \in \cB$. 
	By the proof of \cite[Theorem 1]{Ressel-Ricker1998}, since $\cB$ is induced from an algebra,
	the support of $\tilde E$ is contained in the set of linear characters of $\cB$. 
	Hence, $E : \B(\widehat{\cB}) \rightarrow \cW$, defined by $E(\Delta)=\tilde E(\Delta)$,
	satisfies the statement of Theorem \ref{theorem-A}.
\end{proof}

Assume the notation as in Theorem \ref{theorem-B}.
For every $b\in \cB$, we define the linear operator
		$\rho_b:\cW_1\to \cW_2$ by $\rho_b(A):=\rho(b\otimes A)$. 
We notice the following.


\begin{proposition} \label{trditev-o-omejenosti}
	$\rho_b$ is a bounded linear operator for every $b\in \cB$.
\end{proposition}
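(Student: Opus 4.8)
The plan is to exploit the $C^\ast$-identity together with the positivity of the $\ast$-representation $\rho$, rather than to argue through multiplicativity: note that $\rho_b$ is \emph{not} a $\ast$-homomorphism, since $\rho_b(A)\rho_b(B)=\rho(b^2\otimes AB)$ rather than $\rho(b\otimes AB)=\rho_b(AB)$ in general, so the standard fact that $\ast$-homomorphisms between $C^\ast$-algebras are contractive is not directly available. Linearity of $\rho_b$ is immediate from the linearity of $\rho$ and from $b\otimes(\lambda A+\mu B)=\lambda(b\otimes A)+\mu(b\otimes B)$. The point is therefore to produce an estimate of the form $\|\rho_b(A)\|\le \mathrm{const}\cdot\|A\|$, with a constant depending only on $b$.

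First I would use the $C^\ast$-identity in $\cW_2\subseteq B(\cK)$ together with the fact that $\rho$ preserves the involution and is multiplicative to write
$$\|\rho_b(A)\|^2=\|\rho(b\otimes A)^\ast\rho(b\otimes A)\|=\|\rho(b^\ast b\otimes A^\ast A)\|,$$
since $\rho(b\otimes A)^\ast\rho(b\otimes A)=\rho\big((b\otimes A)^\ast(b\otimes A)\big)=\rho(b^\ast b\otimes A^\ast A)$. Next, since $A\in\cW_1\subseteq B(\cH)$ we have $A^\ast A\preceq\|A\|^2\,\id_\cH$, so $C:=\|A\|^2\,\id_\cH-A^\ast A$ is a positive element of the von Neumann algebra $\cW_1$; setting $D:=C^{1/2}\in\cW_1$ gives $C=D^\ast D$. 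Substituting $A^\ast A=\|A\|^2\,\id_\cH-D^\ast D$ and using linearity of $\rho$ yields
$$\rho(b^\ast b\otimes A^\ast A)=\|A\|^2\,\rho(b^\ast b\otimes\id_\cH)-\rho\big((b\otimes D)^\ast(b\otimes D)\big).$$
Because $\rho\big((b\otimes D)^\ast(b\otimes D)\big)=\rho(b\otimes D)^\ast\rho(b\otimes D)\succeq 0$, this exhibits the sandwich
$$0\preceq\rho(b\otimes A)^\ast\rho(b\otimes A)\preceq\|A\|^2\,\rho(b^\ast b\otimes\id_\cH).$$

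To finish, I would pass to operator norms: for positive operators $0\preceq S\preceq T$ in $B(\cK)$ one has $\|S\|\le\|T\|$, so the displayed sandwich gives $\|\rho_b(A)\|^2\le\|A\|^2\,\|\rho(b^\ast b\otimes\id_\cH)\|$, i.e.
$$\|\rho_b(A)\|\le\|\rho(b^\ast b\otimes\id_\cH)\|^{1/2}\,\|A\|.$$
Since $\rho$ maps into $\cW_2\subseteq B(\cK)$, the quantity $\|\rho(b^\ast b\otimes\id_\cH)\|$ is a finite constant depending only on $b$, and boundedness of $\rho_b$ follows. I expect the only genuinely delicate step to be the recognition that $\|A\|^2\,\id_\cH-A^\ast A$ can be written as $D^\ast D$ with $D\in\cW_1$: this is what allows the correction term $\rho(b^\ast b\otimes C)$ to be fed back into $\rho$ as a manifestly positive element $\rho(b\otimes D)^\ast\rho(b\otimes D)$, and it is precisely here that the hypothesis that $\cW_1$ is a von Neumann algebra (closed under taking square roots of positive elements) is used. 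Everything else is a routine manipulation of the $\ast$-homomorphism property and the $C^\ast$-identity.
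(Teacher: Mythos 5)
Your proof is correct, but it follows a genuinely different route from the paper's. The paper first reduces to elements of the form $a^\ast a$ via an algebraic decomposition of $b$ into ``squares'', observes that $\rho_b$ is then a \emph{positive} linear map $\cW_1\to\cW_2$, and deduces boundedness from the classical fact that a positive linear functional on a unital $C^\ast$-algebra is automatically bounded with norm attained at the identity, applied to the functionals $A\mapsto\left\langle\rho(b\otimes A)k,k\right\rangle$ on $\cW_1$. You instead handle an arbitrary $b\in\cB$ at once: the $C^\ast$-identity in $\cW_2$ reduces the problem to estimating $\|\rho(b^\ast b\otimes A^\ast A)\|$, and the square-root trick $\|A\|^2\id_{\cH}-A^\ast A=D^\ast D$ with $D\in\cW_1$ yields the operator sandwich $0\preceq\rho(b\otimes A)^\ast\rho(b\otimes A)\preceq\|A\|^2\rho(b^\ast b\otimes\id_{\cH})$, whence norm monotonicity of positive operators gives $\|\rho_b(A)\|\le\|\rho(b^\ast b\otimes\id_{\cH})\|^{1/2}\|A\|$. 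Both arguments rest on the same underlying mechanism --- $\rho$ sends elements $F^\ast F$ to positive operators, and the unit of $\cW_1$ dominates $A^\ast A$ up to a scalar --- but yours internalizes the proof of automatic boundedness of positive maps rather than citing it, so it is self-contained and requires no case distinction. It also sidesteps a slip in the paper's reduction step: the identity stated there, $b=\frac{(b+1)^\ast(b+1)}{2}-\frac{(b-1)^\ast(b-1)}{2}$, actually reconstructs $b^\ast+b$ rather than $b$, so the reduction to $\cB^2:=\{a^\ast a\colon a\in\cB\}$ really needs a four-term decomposition with complex coefficients; your direct argument never needs any decomposition. The constants differ slightly ($\|\rho(b^\ast b\otimes\id_{\cH})\|^{1/2}$ versus $\|\rho(b\otimes\id_{\cH})\|$ for $b\in\cB^2$), but both are finite for the same reason, namely that $\rho$ takes values in the von Neumann algebra $\cW_2\subseteq B(\cK)$.
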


\begin{proof}
		Let us write $b=\frac{(b+1)^\ast(b+1)}{2}-\frac{(b-1)^\ast(b-1)}{2}$. Denote $c=\frac{(b+1)(b+1)^\ast}{2}$ and $d=\frac{(b-1)(b-1)^\ast}{2}$. 
	Then $\rho_b=\rho_c-\rho_d$. Thus it suffices to prove, that $\rho_b$ is bounded for every $b\in \cB^2:=\left\{a^\ast a\colon a\in \cB\right\}$. 
	Since $\rho$ is a $\ast$-representation, $\rho_b:\cW_1\to \cW_2$ is a positive linear operator for every $b=a^\ast a\in \cB^2$. Indeed, 
		\begin{eqnarray*}
			\rho_b(A)&=& \rho(b\otimes A)=\rho(a^\ast a\otimes A^{\frac{1}{2}}A^{\frac{1}{2}})=
									 \rho((a\otimes A^{\frac{1}{2}})^\ast (a\otimes A^{\frac{1}{2}}))\\
							 &=& \rho((a\otimes A^{\frac{1}{2}})^\ast) \rho(a\otimes A^{\frac{1}{2}})
							 =   \rho(a\otimes A^{\frac{1}{2}})^\ast \rho(a\otimes A^{\frac{1}{2}})\in (\cW_2)_+
		\end{eqnarray*}
	where $A^{\frac{1}{2}}$ is a positive square root of $A$. 
	For every $k$ in the Hilbert space $\cK$, where $\cW_2\subseteq B(\cK)$,
		$(\rho_b)_{k}:\cW_1\to \CC,$ defined by $(\rho_b)_{k}(A):=\left\langle \rho(b\otimes A)k,k \right\rangle$,
	is a positive linear functional. By \cite[5.12. Corollary]{Con}, it is 
	bounded and has norm $(\rho_b)_k(\id_{\cH})=\left\langle \rho(b\otimes \id_{\cH})k,k \right\rangle$. 
	Hence, 
		$\left|(\rho_b)_k(A)\right|\leq (\rho_b)_k(\id_{\cH}) \|A\|\leq \|\rho(b\otimes \id_{\cH})\| \|k\|^2 \|A\|$
	for every $A\in \cW_1$.
	Therefore, 
		$$\|\rho_b(A)\|=\sup_{\|k\|=1}\left|(\rho_b)_k(A)\right|\leq 
			\sup_{\|k\|=1}\|\rho(b\otimes \id_{\cH})\| \|k\|^2 \|A\|
			=\|\rho(b\otimes \id_{\cH})\|\|A\|$$ 
	for every $A\in \cW_1$. 
	Hence, $\rho_b$ is a bounded linear operator with norm $\|\rho(b\otimes \id_{\cH})\|$.
\end{proof}

	Essential technical lemma in the proof of Theorem \ref{theorem-B} is the following.

\begin{lemma} \label{gostost-krogle}
	Let $K\in \Bor(\widehat{\cB})$ be a compact set. Let us equip the algebra
		$\cA_K=\left\{(f_b)|_K\in C(K)\colon b\in \cB\right\}$
of continuous functions with a supremum norm. Suppose $M(K)$ is the normed space of all complex-valued regular Borel measures on $K$ equipped with a variation norm. Then the unit ball 
		$B_{\cA_K}:=\left\{f\in \cA_K\colon \|f\|_{\infty}\leq 1\right\}$
	is dense in the unit ball of $C(K)^{\ast\ast}=M(X)^\ast$ equipped with a 	
	weak$^\ast$-topology.
\end{lemma}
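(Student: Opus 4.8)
The plan is to combine the Stone--Weierstrass theorem with a Goldstine-type separation argument. First I would show that $\cA_K$ is dense in $C(K)$ in the supremum norm. Note that $\cA_K$ is a subalgebra of $C(K)$: since each $\chi\in K$ is a $\ast$-homomorphism we have $f_b\cdot f_{b'}=f_{bb'}$, $f_b+f_{b'}=f_{b+b'}$ and $\lambda f_b=f_{\lambda b}$, while $f_b|_K\in C(K)$ because $\tau$ is the weakest topology making all $f_b$ continuous. This subalgebra contains the constants (as $f_{\lambda\cdot 1}=\lambda$, using that $\cB$ is unital), it is closed under complex conjugation (because $\overline{f_b}=f_{b^\ast}$, since $\chi(b^\ast)=\overline{\chi(b)}$), and it separates the points of $K$ (if $\chi_1\neq\chi_2$ there is $b\in\cB$ with $\chi_1(b)\neq\chi_2(b)$, i.e.\ $f_b(\chi_1)\neq f_b(\chi_2)$). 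Since $K$ is a compact Hausdorff space, the complex Stone--Weierstrass theorem gives that $\cA_K$ is dense in $(C(K),\|\cdot\|_\infty)$.

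Next I would pass to the bidual. By the Riesz representation theorem $C(K)^\ast=M(K)$, hence $C(K)^{\ast\ast}=M(K)^\ast$ and the weak$^\ast$-topology on it is $\sigma(M(K)^\ast,M(K))$; under the canonical embedding $C(K)\hookrightarrow C(K)^{\ast\ast}$ the image of $g\in C(K)$ is the functional $\mu\mapsto\int_K g\,d\mu$ on $M(K)$. Let $C$ denote the weak$^\ast$-closure of $B_{\cA_K}$ in $M(K)^\ast$. The inclusion $C\subseteq B_{C(K)^{\ast\ast}}$ is clear, since $B_{C(K)^{\ast\ast}}$ is weak$^\ast$-closed by the Banach--Alaoglu theorem. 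For the reverse inclusion suppose $\Phi_0\in B_{C(K)^{\ast\ast}}$ with $\Phi_0\notin C$. As $C$ is convex and balanced, the Hahn--Banach separation theorem (applied in the duality $\langle M(K)^\ast,M(K)\rangle$, whose continuous functionals are exactly the elements of $M(K)$) yields $\mu\in M(K)$ with
\[
\sup_{f\in B_{\cA_K}}\Big|\int_K f\,d\mu\Big|<|\Phi_0(\mu)|\leq\|\Phi_0\|\,\|\mu\|_{M(K)}\leq\|\mu\|_{M(K)}.
\]

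The crux, and the step I expect to be the main obstacle, is to show that the left-hand supremum actually equals $\|\mu\|_{M(K)}$, which contradicts the displayed strict inequality and finishes the proof. By the norm identity $\|\mu\|_{M(K)}=\sup_{g\in B_{C(K)}}|\int_K g\,d\mu|$ coming from Riesz duality, it suffices to approximate each $g\in B_{C(K)}$ by elements of $B_{\cA_K}$. Given $g$ with $\|g\|_\infty\leq 1$ and $\varepsilon>0$, Stone--Weierstrass density gives $f\in\cA_K$ with $\|f-g\|_\infty<\varepsilon$; then $f/(1+\varepsilon)\in B_{\cA_K}$, and a routine estimate shows $\big|\int_K g\,d\mu-\int_K f/(1+\varepsilon)\,d\mu\big|\to 0$ as $\varepsilon\to 0$. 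Hence $|\int_K g\,d\mu|\leq\sup_{f\in B_{\cA_K}}|\int_K f\,d\mu|$, and taking the supremum over $g\in B_{C(K)}$ gives $\|\mu\|_{M(K)}\leq\sup_{f\in B_{\cA_K}}|\int_K f\,d\mu|$; the reverse inequality is trivial because $B_{\cA_K}\subseteq B_{C(K)}$. This normalization issue, namely passing from norm density of the subspace $\cA_K$ to density of its \emph{unit ball} (which forces the $1/(1+\varepsilon)$ rescaling), is the only genuinely delicate point; everything else is the standard Goldstine-type argument. I also note that the statement's ``$M(X)$'' should read $M(K)$.
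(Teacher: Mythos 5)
Your proof is correct and takes essentially the same route as the paper's: the paper's entire proof consists of two citations, namely \cite[V.4.1.\ Proposition]{Con} (Goldstine's theorem, giving that the unit ball of $C(K)$ is weak$^\ast$-dense in the unit ball of $C(K)^{\ast\ast}=M(K)^\ast$) and \cite[p.\ 95]{B-M-K} (giving that $B_{\cA_K}$ is dense in the unit ball of $C(K)$). Your Stone--Weierstrass argument together with the $1/(1+\varepsilon)$ rescaling supplies a proof of the second cited fact, your Hahn--Banach separation argument re-derives the first, and your observation that ``$M(X)$'' should read $M(K)$ is also correct.
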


\begin{proof}
	By \cite[V.4.1. Proposition]{Con}, the unit ball of $C(K)$ is dense in $M(X)^\ast$ equipped with a weak$^\ast$-topology. By \cite[p.\ 95]{B-M-K}, 
	$B_{\cA_K}$ is dense in the ball of $C(K)$.
\end{proof}


\begin{proof}[Proof of Theorem \ref{theorem-B}]
	The non-trivial direction is $(1)\Rightarrow (2).$ Since $\rho$ is a $\ast$-rep\-re\-sen\-tation, the maps
		$\rho_P: \cB \to \cW_2,$ $\rho_P(b):=\rho(b\otimes P)$ are $\ast$-representations 
	for every $P\in (\cW_1)_p$.	
	By Theorem A, there exist unique spectral measures
	$E_P:\B(\widehat{B})\to \cW_2$ with a compact support, 
	such that $\rho_P(b)=\int_{\widehat{B}} f_b(\chi)\; dE_P(\chi)$ holds for every $b\in \cB$ 
	and every $P\in (\cW_1)_p$. 
	The idea is to show that the family
	$\{E_P\}_{P\in (\cW_1)_p}$ satisfies the conditions of Theorem \ref{karakterizacija-nenegativnih-spektralnih-mer} to obtain a non-negative spectral measure $M$ representing $\rho$.
	
	First let us show, that $\supp(E_P)\subseteq \supp(E_{\id_{\cH}})$ for every $P\in (\cW_1)_p$.
	Define $K:=\supp(E_P)\;\cup\; \supp(E_{\id_{\cH}})$. 
	Let us assume that $\supp(E_P)\setminus \supp(E_{\id_{\cH}})\neq \emptyset$ for
	some $P\in (\cW_1)_p$. Hence, 
	there are a set $\Delta\in \supp(E_P)\setminus \supp(E_{\id_{\cH}})$ and a vector $k_1\in \cK$ 
	such that $(E_P)_{k_1,k_1}(\Delta)>0$ and $(E_{\id_{\cH}})_{k_1,k_1}(\Delta)=0$.
	By Lemma \ref{gostost-krogle}, there exists a net $f_{b_j}\in C(K)$, such that 
	$\int_K f_{b_j}\; d(E_{P})_{k_1,k_1} \to (E_P)_{k_1,k_1}(\Delta)>0$ and 
	$\int_K f_{b_j}\; d(E_{\id_{\cH}-P})_{k_1,k_1} \to (E_{\id_{\cH}-P})_{k_1,k_1}(\Delta)\geq 0.$
	However, for every $j$ in the net we have
		\begin{eqnarray*}
			0&=&\int_K f_{b_j}\; d(E_{\id_{\cH}})_{k_1,k_1}=
			\left\langle \rho(b_j\otimes \id_{\cH})k_1,k_1\right\rangle=\\
			&=& \left\langle \rho(b_j\otimes P)k_1,k_1\right\rangle +
			\left\langle \rho(b_j\otimes (\id_{\cH}-P))k_1,k_1\right\rangle=\\
			&=&	\int_K f_{b_j}\; d(E_{P})_{k_1,k_1} + \int_K f_{b_j}\; d(E_{\id_{\cH}-P})_{k_1,k_1}.
		\end{eqnarray*}
	Since $\lim_j\int_K f_{b_j}\; d(E_{P})_{k_1,k_1}>0$ and 
	$\lim_j\int_K f_{b_j}\; d(E_{\id_{\cH}-P})_{k_1,k_1}\geq 0$, this is a contradiction.
	
		It remains to check first that the family $\{E_P\}_{P\in (W_1)_p}$ satisfies the conditions of Theorem \ref{karakterizacija-nenegativnih-spektralnih-mer}, second that $M$ is a representing measure of $\rho$ and finally that $M$ is unique, regular and normalized. Since the support of every $E_P$ is contained in the compact set $K=\supp(E_{\id_{\cH}})$, the proofs are the same as the proof of direction 
$(1)\Rightarrow(2)$ of \cite[Theorem 9.1]{Zalar2014}, just that we replace the use of \cite[Lemma 2.3]{Zalar2014} by Lemma \ref{gostost-krogle} above.
\end{proof}

\section{Proof of Theorem C'} \label{sekcija-problema-C}

A \textsl{bounding sequence} $E_n$ of a closed operator $A$ in a Hilbert space $\cK$ 
is an increasing family of hermitian projections, such that $\cup_{n=1}^{\infty} E_n=\id_{\cK}$
and $E_nA\subseteq AE_n$ and $AE_n$ is bounded everywhere defined operator on $\cK$ for every $n\in\NN$.

\begin{proof}[Proof of Theorem \ref{theorem-C'}]
	The direction $(2)\Rightarrow (1)$ follows by Propositions \ref{integrabilnost} and \ref{gostost-D-ja}.
	Now we will prove the direction $(1)\Rightarrow (2)$.
	In particular, $\cB$ is a commutative semigroup with an involution, where the group operation
	is the algebra multiplication. Let $\cN(\cH)$ be the set of normal operators on a Hilbert space $\cH$.
	First we prove, that the representation 
	$\rho_1:\cB\to \cN(\cH)$, $\rho_1(b):=\overline{\rho(b)}$, 
	is well-defined and satisfies the conditions (1)-(5) in the definition of a $\ast$-representation of a 
	semigroup with an involution as defined in Subsection \ref{subsection2}.
	
	 \textbf{Well-definedness:} 
	$\rho_1(b)$ has to be a normal operator for every $b\in \cB$. Since $\rho$ is integrable, this is true by \cite[Theorem 9.1.2]{Sch2}.
	
	 \textbf{Condition (1):} By the definition of a $\ast$-representation,
	$\rho(1)=\id_{\cD}$ and hence $\rho_1(1)=\id_{\cH}$. 
	
	 \textbf{Condition (2):} By the definition of integrability,	
	$\overline{\rho(b^\ast)}=\rho(b)^\ast$. Since $\rho(b)$ is closable, 
	$\rho(b)^\ast=\overline{\rho(b)}^\ast$. Thus, $\rho_1(b^\ast)=\rho_1(b)^\ast$.
	
	 \textbf{Condition (3):}
	The following chain is true.
		\begin{eqnarray*}
			\overline{\rho(a)}\;\overline{\rho(b)} &=&
			\overline{\rho(a^{\ast\ast})}\;\overline{\rho(b^{\ast\ast})}=
			\rho(a^\ast)^\ast\rho(b^\ast)^\ast\subseteq
			(\rho(b^\ast)\rho(a^\ast))^\ast\\
			&=& \rho(b^\ast a^\ast)^\ast=
			\overline{\rho((b^\ast a^\ast)^\ast)}=
			\overline{\rho(ab)},
		\end{eqnarray*}
	where the second equalities in both lines follow by integrability of $\rho$.
	Hence, $\rho_1(a)\rho_1(b)\subseteq \rho_1(ab)$.
	and 
	$\cD(\rho_1(a)\rho_1(b)) \subseteq \cD(\rho_1(b)) \cap \cD(\rho_1(ab))$. 
	To prove the converse inclusion, let
	$h\in \cD(\rho_1(b)) \cap \cD(\rho_1(ab))$. 
	By the integrability of $\rho$, there is an abelian von Neumann algebra $\cN$ such that $\rho_1(a)$ 
	is affiliated with $\cN$ for all $a\in \cB$ (see \cite[Theorem 9.1.7]{Sch2}). 
	By \cite[Theorem 5.6.15]{Kad-Rin}, $\rho_1(a)$, $\rho_1(b)$ and $\rho_1(ab)$ 
	have a common bounding sequence $E_n$. 
	Since $h\in \cD(\rho_1(b)),$ we have $E_n\rho_1(b)h=\rho_1(b)E_nh$.
	Hence, $\rho_1(a)\rho_1(b)E_nh=\rho_1(a)E_n(\rho_1(b)h)$ is well-defined. By,
	$\rho_1(a)\rho_1(b)\subseteq \rho_1(ab)$, we have 
	$\rho_1(ab) E_nh = \rho_1(a)\rho_1(b)E_n h$ for every $n\in \NN$.
	As $n\to \infty$, it follows that 
		$\rho_1(b)E_n h \to \rho_1(b)h$ (by $h \in \cD(\rho_1(b))$) and 
		$\rho_1(ab)E_nh \to \rho_1(ab)h$ (by $h \in \cD(\rho_1(ab))$). 
	Therefore, since the operator $\rho_1(a)$ is closed, it follows 
	that $\rho_1(b)h \in \cD(\rho_1(a))$. So, $h \in  \cD(\rho_1(a)\rho_1(b))$.
	Hence, $\cD(\rho_1(a)\rho_1(b))=
	\cD(\rho_1(ab))\cap \cD(\rho_1(b))$. Therefore, (iii) is true.
	
	 \textbf{Condition (4):}
	By (iii), we have $\overline{\rho_1(a)\rho_2(b)}\subseteq \rho_1(ab)$.
	We also know, that 
		$$\rho_1(ab)=\overline{\rho(ab)}=\overline{\rho(a)\rho(b)}\subseteq 
			\overline{\overline{\rho(a)}\;\overline{\rho(b)}}=\overline{\rho_1(a)\rho_1(b)}.$$
	Thus, $\rho_1(ab)\subseteq \overline{\rho_1(a)\rho_2(b)}$.
	Hence, $\overline{\rho_1(a)\rho_1(b)}=\rho_1(ab)$. This is exactly the condition (4).
	
	 \textbf{Condition (5):} It is fulfilled by the assumption.
	
	Thus, by Theorem \ref{polgrupe-neomejen} above,	there exists a unique regular spectral measure
	$E:\Bor(\cB^\ast)\to B(\cK)$ such that 
	$\overline{\rho(b)}x=\int_{\cB^\ast}f_b(\chi)\;dE(\chi)\;x,$
	for every $b\in \cB$ and every $x\in \cD(\overline{\rho(b)})$.
	Since $\cB$ is also an algebra, $E$-almost every character $\chi\in \cB^\ast$ is linear (see 
		\cite[Proof of Theorem 1, p.~2953]{Ressel-Ricker1998} 
	and
		\cite[Proof of Theorem 1.2, p.~230]{Ressel-Ricker2002}).
	Therefore, $\overline{\rho(b)}x=\int_{\widehat\cB}f_b(\chi)\;dE(\chi)\;x$
	for every $x\in \cD(\overline{\rho(b)})$.
\end{proof}

\section{Integral of unbounded functions with respect to a non-negative spectral measure}
\label{razsiritev-integracije}

Let $(X,\Bor(X),\cW\subseteq B(\cH),B(\cK),M)$ be a space with a non-negative spectral measure $M$,
where $X$ is a topological space, $\Bor(X)$ a Borel $\sigma$-algebra on $X$,
$\cW\subseteq B(\cH)$ a von Neumann algebra
and $B(\cH), B(\cK)$ the bounded linear operators on Hilbert spaces $\cH$, $\cK$.

Before solving Problem D, the definition of the integration of functions with respect to a non-negative spectral measure $M:\Bor(X)\to B(\cW,B(\cK))$ has to be extended to a larger set of functions. In \cite{Zalar2014}, a measurable function $f$ was $M$-integrable, if it was $M_A$-essentially bounded for every positive operator $A\in \cW_+$, i.e., there exists a constant $K_f$ such that
$\left\langle (\int_X \left|f\right|\;dM_A)k,k\right\rangle\leq K_f \|k\|^2$ for every $k\in \cK$. In this section we extend this definition to functions, which do not necessarily satisfy this condition. 

Let $\scrK:=\left\{K\subseteq X\colon K \text{ compact}\right\}$ be the set of all compact subsets of $X$. Let us define the set $\cD_0\subseteq \cK$ by
		$$\cD_0:=\bigcup_{K\in \scrK}\; M(K)(\id_{\cH})\cK.$$

\begin{proposition} \label{gostost}
	$\cD_0$ is a dense linear subspace in $\cK$ in either of the following cases:
		\begin{itemize}
			\item The topological space $X$ is $\sigma$-compact.
			\item The measure $M$ is regular.
		\end{itemize}
\end{proposition}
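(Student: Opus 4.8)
The plan is to reduce everything to the single spectral measure $E:=M_{\id_{\cH}}$. Since $\id_{\cH}$ is a hermitian projection, $E\colon\Bor(X)\to B(\cK)$ is a spectral measure by the defining property of a non-negative spectral measure; in particular it is projection-valued, countably additive in the strong operator topology, and $E(X)=\id_{\cK}$. As $M(K)(\id_{\cH})=E(K)$ for every compact $K$, I may rewrite
$$\cD_0=\bigcup_{K\in\scrK}E(K)\cK.$$
First I would verify that $\cD_0$ is a linear subspace. The point is that the family of ranges $\{E(K)\cK\}_{K\in\scrK}$ is directed upward by inclusion: if $K_1,K_2\in\scrK$ then $K_1\cup K_2\in\scrK$, and since $E(K_i)\preceq E(K_1\cup K_2)$ we get $E(K_i)\cK\subseteq E(K_1\cup K_2)\cK$ for $i=1,2$. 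Hence any two elements of $\cD_0$ lie in a common subspace $E(K_1\cup K_2)\cK$, so all their linear combinations stay in $\cD_0$.

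For the regular case I would repeat the computation from the proof of Proposition \ref{gostost-D-ja}. Fix $h\in\cK$. For every $K\in\scrK$, using that $\id_{\cK}-E(K)=E(X\setminus K)$ is a projection,
$$\|h-E(K)h\|^2=\left\langle E(X\setminus K)h,h\right\rangle=E_{h,h}(X)-E_{h,h}(K)=\|h\|^2-E_{h,h}(K).$$
Since $M$ is regular, $E=M_{\id_{\cH}}$ is regular, so $E_{h,h}$ is a finite positive regular Borel measure; by inner regularity $\sup_{K\in\scrK}E_{h,h}(K)=E_{h,h}(X)=\|h\|^2$. Therefore $\inf_{K\in\scrK}\|h-E(K)h\|=0$, and since each $E(K)h\in\cD_0$, the vector $h$ lies in $\overline{\cD_0}$; as $h$ was arbitrary, $\cD_0$ is dense.

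For the $\sigma$-compact case, write $X=\bigcup_{n=1}^{\infty}L_n$ with each $L_n$ compact and set $K_n:=\bigcup_{j=1}^{n}L_j$; then $(K_n)_{n\in\NN}$ is an increasing sequence of compact sets with $\bigcup_n K_n=X$, so $E(\bigcup_n K_n)=E(X)=\id_{\cK}$. By continuity from below of the spectral measure $E$ (the property $\lim_{n\to\infty}E(K_n)h=h$ for all $h\in\cK$, cf.\ the corresponding fact recalled in Subsection \ref{subsection1}), we obtain $E(K_n)h\to h$ for every $h\in\cK$. Since $E(K_n)h\in\cD_0$, this again yields $\overline{\cD_0}=\cK$. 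The arguments are short once one observes that the two hypotheses are precisely the two standard reasons for a spectral measure to be inner-approximated on compact sets: countable additivity along an exhausting sequence in the $\sigma$-compact case, and inner regularity of the finite measures $E_{h,h}$ in the regular case. The only genuinely structural step, as opposed to a routine measure-theoretic limit, is the verification that $\cD_0$ is a subspace, which rests entirely on the upward directedness of $\scrK$ and the monotonicity $E(K_1)\preceq E(K_1\cup K_2)$; I do not expect any real obstacle beyond this.
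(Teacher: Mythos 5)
Your proof is correct and follows essentially the same route as the paper: both reduce to the spectral measure $E=M_{\id_{\cH}}$, establish linearity of $\cD_0$ via the compact set $K_1\cup K_2$ and monotonicity of the projections, and obtain density from the increasing exhaustion by compacts in the $\sigma$-compact case and from inner regularity of $E_{h,h}$ (exactly the computation of Proposition \ref{gostost-D-ja}) in the regular case. No gaps; your phrasing of the subspace step via upward directedness of the ranges $E(K)\cK$ is just a cleaner formulation of the paper's argument.
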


\begin{proof}
	First we show $\cD_0$ is a linear subspace in $\cK$.
	Let $x_1, x_2\in \cD_0$ and $\lambda_1, \lambda_2\in \CC$. 
	Then it holds that $x_1=M(K_1)(\id_{\cH})k_1$, $x_2=M(K_2)(\id_{\cH})k_2$
	for some compact sets $K_1, K_2$ and some vectors $k_1, k_2\in \cK$. Since 
	$M(\Delta)(\id_{\cH})$ is a hermitian projection for every $\Delta\in \Bor(X)$ 
	it follows that 
		$M(K_i)(\id_{\cH})x_i=(M(K_i)(\id_{\cH}))^2k_i=M(K_i)(\id_{\cH})x_i=x_i$
	for $i=1,2.$ By the inequality 
	$M(\Delta_1)(\id_{\cH})\preceq M(\Delta_2)(\id_{\cH})$ if $\Delta_1\subseteq \Delta_2$, it follows that
		$\lambda_1 x_1+\lambda_2 x_2=M(K_1\cup K_2)(\id_{\cH})(\lambda_1 x_1+\lambda_2 x_2).$
	Hence, $\lambda_1 x_1+\lambda_2 x_2\in \cD_0$ and
	$\cD_0$ is a linear subspace in $\cK$.
	
	Now we prove $\cD_0$ is dense in $\cK$. We separate two cases:
	\begin{itemize}
		\item Let us assume $X$ is $\sigma$-compact. Take $k\in \cK$. By $M(X)(\Id_{\cH})k=k$ and by the 
			$\sigma$-compactness of $X$, we have 
				$k=M(X)(\Id_{\cH})k=\displaystyle\lim_{n\in \NN} M(K_n)(\Id_{\cH})k,$
			where $K_n$ is an increasing sequence of compact sets, such the $\cup_{n} K_n=X$.
			Hence, $\cD_0$ is dense. 
		\item Let now $X$ be arbitrary and $M$ regular.
			Since $(M_{\id_{\cH}})_{k,k}$ is regular for every $k\in \cH$, 
				$\displaystyle\sup_{K\in\scrK} (M_{\id_{\cH}})_{k,k}(K)=
				 (M_{\id_{\cH}})_{k,k}(X)=\|k\|^2$. Then 
				\begin{eqnarray*}
					&&\|k-M_{\id_{\cH}}(K)k\| = \|M_{\id_{\cH}}(K^c)k\|=
					((M_{\id_{\cH}})_{k,k}(K^c))^{\frac{1}{2}}\\
					&=&	((M_{\id_{\cH}})_{k,k}(X)-(M_{\id_{\cH}})_{k,k}(K)
						)^{\frac{1}{2}}
					= (\|k\|^2-(M_{\id_{\cH}})_{k,k}(K))^{\frac{1}{2}},
				\end{eqnarray*}
			is true for every $K\in \scrK$ and hence, $\displaystyle\sup_{K\in \scrK} M_{\id_{\cH}}(K)k=k$
			\qedhere
		\end{itemize}
\end{proof}



From now on we will assume that the space $\cD_0$ is dense. We say a $\Bor(X)$-measurable function $f$ is \textsl{$M$-integrable} if $\cD_0$ is in the domain of the operator $\int_X f\; dM_P$ for every hermitian projection $P\in (\cW)_p$. By $\cI(M)$ we denote be the set of all 
$M$-integrable function. 

\begin{proposition}
	The set $\cI(M)$ is a complex vector space and it contains all bounded functions on $X$ 
	and all continuous functions on $X$.
\end{proposition}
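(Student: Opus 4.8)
The plan is to reduce membership in $\cI(M)$ to a single pointwise condition and then verify it for each of the three claims. Fix a hermitian projection $P\in(\cW)_p$. The integral $\int_X f\;dM_P=\II_{M_P}(f)$ is the ordinary spectral integral with respect to the spectral measure $M_P:\Bor(X)\to B(\cK)$, whose domain is, by Theorem \ref{izrek3}, $\cD(\II_{M_P}(f))=\{x\in\cK:\int_X|f|^2\;d(M_P)_{x,x}<\infty\}$. By definition, $f\in\cI(M)$ means exactly that $\cD_0\subseteq\cD(\II_{M_P}(f))$ for every $P\in(\cW)_p$, i.e.\ $\int_X|f|^2\;d(M_P)_{x,x}<\infty$ for every $x\in\cD_0$ and every such $P$.

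The vector space structure I would argue domain-wise, uniformly in $P$. For scalars, $\int_X|\lambda f|^2\;d(M_P)_{x,x}=|\lambda|^2\int_X|f|^2\;d(M_P)_{x,x}$, so $\cD(\II_{M_P}(\lambda f))=\cD(\II_{M_P}(f))$ and hence $\lambda f\in\cI(M)$ whenever $f\in\cI(M)$. For sums I would use $|f+g|^2\le 2|f|^2+2|g|^2$, which yields $\cD(\II_{M_P}(f))\cap\cD(\II_{M_P}(g))\subseteq\cD(\II_{M_P}(f+g))$; since $f,g\in\cI(M)$ put $\cD_0$ into both domains for every $P$, it lies in the domain of the sum as well, so $f+g\in\cI(M)$. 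Bounded functions are then immediate: if $|f|\le C$ on $X$, then, since $M_P(X)$ is a hermitian projection, $\int_X|f|^2\;d(M_P)_{x,x}\le C^2(M_P)_{x,x}(X)\le C^2\|x\|^2<\infty$ for \emph{every} $x\in\cK$, so $\cD(\II_{M_P}(f))=\cK\supseteq\cD_0$ and $f\in\cI(M)$.

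The main point — and the only place where both the definition of $\cD_0$ and the non-negative spectral measure axioms are genuinely used — is the continuous case, where $f$ need not be bounded once $X$ is non-compact. Here I would exploit that every $x\in\cD_0$ has the form $x=M(K)(\id_{\cH})k=M_{\id_{\cH}}(K)k$ for some $K\in\scrK$ and $k\in\cK$, and show that the scalar measure $(M_P)_{x,x}$ is concentrated on $K$. Since $M_{\id_{\cH}}(K)$ is a hermitian projection, applying the defining identity $M_P(\Delta_1)M_Q(\Delta_2)=M_{PQ}(\Delta_1\cap\Delta_2)$ twice (with $\id_{\cH}P=P$) gives
\begin{align*}
	(M_P)_{x,x}(\Delta)
	&=\langle M_P(\Delta)M_{\id_{\cH}}(K)k,\,M_{\id_{\cH}}(K)k\rangle\\
	&=\langle M_{\id_{\cH}}(K)M_P(\Delta)M_{\id_{\cH}}(K)k,\,k\rangle\\
	&=\langle M_P(K\cap\Delta)k,\,k\rangle=(M_P)_{k,k}(K\cap\Delta).
\end{align*}
Thus $(M_P)_{x,x}$ is supported in $K$, whence
\begin{align*}
	\int_X|f|^2\;d(M_P)_{x,x}
	&=\int_K|f|^2\;d(M_P)_{k,k}\\
	&\le\Big(\sup_{t\in K}|f(t)|\Big)^2(M_P)_{k,k}(K)
	\le\Big(\sup_{t\in K}|f(t)|\Big)^2\|k\|^2,
\end{align*}
which is finite because $f$ is continuous on the compact set $K$. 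Hence $x\in\cD(\II_{M_P}(f))$ for every $x\in\cD_0$ and every $P$, so $f\in\cI(M)$. The step I expect to be the crux is precisely this localisation: establishing, through the multiplicativity/commutation axiom of the non-negative spectral measure, that vectors in $\cD_0$ concentrate all the measures $(M_P)_{x,x}$ on a common compact set, which is exactly what upgrades ``continuous'' to ``$M$-integrable''.
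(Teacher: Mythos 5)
Your proof is correct, and its skeleton is the paper's: pick $x\in\cD_0$, write $x=M(K)(\id_{\cH})k$ with $K\in\scrK$, and use that a continuous or bounded function is bounded on the compact set $K$. The genuine difference lies in how the arbitrary projection $P$ is handled. The paper never uses the multiplicativity axiom in this proposition: it localizes only the measure $(M_{\id_{\cH}})_{k,k}$ on $K$ (which needs nothing beyond $M_{\id_{\cH}}$ being a spectral measure with $M_{\id_{\cH}}(K)k=k$) and then transfers finiteness to $P$ by domination, writing $(M_{\id_{\cH}})_{k,k}=(M_{P})_{k,k}+(M_{\id_{\cH}-P})_{k,k}$; linearity of $M(\Delta)$ plus positivity of the second summand give $(M_P)_{k,k}\le(M_{\id_{\cH}})_{k,k}$ as measures, hence $\int_X|f|^2\,d(M_P)_{k,k}\le\int_K|f|^2\,d(M_{\id_{\cH}})_{k,k}<\infty$. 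You instead apply the axiom $M_P(\Delta_1)M_Q(\Delta_2)=M_{PQ}(\Delta_1\cap\Delta_2)$ twice to get the exact localization $(M_P)_{x,x}(\Delta)=(M_P)_{k,k}(K\cap\Delta)$ for every $P$. Both are valid one-step transfers; the paper's is marginally more elementary, since only the additive and positive structure of $M$ enters, while yours is sharper: it shows that every $(M_P)_{x,x}$, not just $(M_{\id_{\cH}})_{x,x}$, concentrates on $K$, which is essentially the measure-level form of the commutation relation $\II_{M_P}(f)M(K)(\id_{\cH})=\II_{M_P}(f\chi_K)$ that the paper only establishes later (Lemma \ref{lema5}). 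Your explicit treatment of the vector-space structure (via $|f+g|^2\le 2|f|^2+2|g|^2$) and of bounded functions fills in steps the paper dismisses as clear, and is fine.
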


\begin{proof}
	The fact that $\cI(M)$ is a complex vector space is clear. Let now $f$ be a bounded function on $X$ 
	or a continuous functions on $X$. 
	Pick $k\in \cD_0$. 
	Then there are a compact set $K\in \scrK$ and 
	$k'\in \cK$ such that $M(K)(\id_\cH)k'=k$. Hence, 
	$M(K)(\id_\cH)k=M(K)(\id_\cH)^2k'=M(K)(\id_\cH)k'=k$. Thus, by the compactness of $K$ and 
	$(M_{\id_{\cH}})_{k,k}$ being a finite measure, it holds that
		$\int_{X}\left|f\right|^2\; d(M_{\id_{\cH}})_{k,k}
			=	\int_{K}\left|f\right|^2\; d(M_{\id_{\cH}})_{k,k}<\infty.$
	Since $(M_{\id_{\cH}})_{k,k}=(M_{P})_{k,k}+(M_{\id_{\cH}-P})_{k,k}$, $k$ also belongs to
	$\II_{M_P}(f)$ for every hermitian projection $P\in \cW_p$. Hence, $\cD_0\subseteq \II_{M_P}(f)$.
\end{proof}

\subsection*{Construction of an integral $\mathbb{I}_M$} Now we explain the construction of the integral
with respect to a non-negative spectral measure, denoted by $\mathbb{I}_M$, and prove well-definedness in what follows.

\noindent \textbf{Step 1 -} \textsl{The construction of a preintegral $\psi:I(M)\times \cW\to \cL^+(\cD_0)$:}

\begin{itemize} 
	\item For $(f,P)\in \cI(M)\times \cW_p$, we define 
		$\psi(f,P):=(\int_X f\;dM_p)|_{\cD_0}.$
	\item For $(f,A)\in \cI(M)\times \cW_+$ such that $A$ has a finite spectral decomposition
		$\sum_{k=1}^n\lambda_k P_k$, where $\lambda_k\geq 0$ are non-negative and
		$P_k$ are mutually orthogonal hermitian projections (i.e., $P_iP_j=0$ for every $i\neq j$),
		we define
			$\psi(f,A):= \sum_{k=1}^n\lambda_k \psi(f, P_k).$
	\item For $(f,A)\in \cI(M)\times \cW_+$ such that $A$ does not have a finite spectral decomposition,
		take an arbitrary limiting sequence $S_\ell(A)$ and define
			$\displaystyle\psi(f,A):=\lim_{\ell\to\infty} \psi(f,S_\ell(A)).$
		By Proposition \ref{proposition3} below, $\psi(f,A)$ is well-defined and does not depend
		on the choice of the sequence $S_\ell(A)$.
	\item For $(f,A)\in \cI(M)\times \cW$,
		define
		$$\psi(f,\re(A)_+)x-\psi(f, \re(A)_-)x+ 
			 i\cdot \psi(f, \im(A)_+)x-
			 i\cdot \psi(f, \im(A)_-)x,$$
		where $\re(A),\im(A)$ denote the real and the imaginary part of the operator $A$ and
		$A_+, A_-$ the positive and the negative part of the hermitian
		operator $A$.
	\item $\psi$ is a bilinear form (see Proposition \ref{bilinearnost-psija}).
\end{itemize}

\noindent \textbf{Step 2 -} \textsl{Defining a map $\tilde\psi:\cI(M)\otimes \cW\to \cL^+(\cD_0)$:}

Since $\psi$ is a bilinear form, it extends, by the universal property of tensor products, to
the linear map 
	$\overline{\psi}:\cI(M) \otimes \cW \to \cL^+(\cD_0)$, defined by
		$\overline{\psi}(\sum_{i=1}^{n}f_i\otimes A_i)=
		\sum_{i=1}^{n}\psi(f_i, A_i).$

\noindent \textbf{Step 3 -} \textsl{Defining an integral $\mathbb{I}_M$:}

We extend $\tilde{\psi}$ to \textsl{the integral} $\mathbb{I}_M:\cI(M) \otimes \cW \to \cN(\cK)$, defined by the rule
	$\mathbb{I}_M(\sum_{i=1}^{n}f_i\otimes A_i)=
		\overline{\tilde{\psi}(\sum_{i=1}^{n}f_i\otimes A_i)}$,
where $\cN(\cK)$ denotes the vector space of normal operators on $\cK$ 
and $\overline{T}$ denotes the closure of a densely defined operator $T\in \cL^+(\cD_0)$. By Proposition \ref{dobra-definiranost-I-ja},
$\mathbb{I}_M$ is well-defined.

It remains to prove Propositions \ref{proposition3}, \ref{bilinearnost-psija}
and \ref{dobra-definiranost-I-ja} which were needed for the construction of $\mathbb{I}_M$. 
First we prove the following lemma, which will be needed in the proofs.


\begin{lemma}\label{lema5}
	Let $f\in \cI(M)$. Then $\cD_0$ is 
	a core for $\II_{M_P}(f)$. Also,
		$$M(K)(\id_{\cH})\II_{M_P}(f)\subseteq \II_{M_P}(f)M(K)(\id_{\cH})=
			\II_{M_P}(f\chi_{K})$$
	for every compact set $K$.
\end{lemma}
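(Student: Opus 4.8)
The plan is to reduce both assertions to the ordinary spectral-integral calculus of Subsection~\ref{subsection1} applied to the single spectral measure $M_P$, using the defining multiplicativity of a non-negative spectral measure as the only bridge to the projections $M_{\id_{\cH}}(K)$ out of which $\cD_0$ is built. First I would record the elementary consequences of $M_P(\Delta_1)M_Q(\Delta_2)=M_{PQ}(\Delta_1\cap\Delta_2)$ with $Q=\id_{\cH}$: since $P\cdot\id_{\cH}=P$, the projection $M_{\id_{\cH}}(K)=M(K)(\id_{\cH})$ commutes with every $M_P(\Delta)$ and satisfies $M_{\id_{\cH}}(K)M_P(\Delta)=M_P(K\cap\Delta)$. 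Approximating a bounded measurable $g$ uniformly by simple functions then yields the bounded-function identity $M_{\id_{\cH}}(K)\II_{M_P}(g)=\II_{M_P}(g\chi_K)$. Finally, because $f$ is $M$-integrable, the range of each $M_{\id_{\cH}}(K)$ lies in $\cD(\II_{M_P}(f))$; combined with $(M_P)_{M_{\id_{\cH}}(K)x,\,M_{\id_{\cH}}(K)x}(\cdot)=(M_P)_{x,x}(K\cap\cdot)$ this gives $\int_X|f\chi_K|^2\,d(M_P)_{x,x}<\infty$ for all $x\in\cK$, so $\II_{M_P}(f\chi_K)$ is everywhere defined, hence bounded.

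Next I would fix the bounding sequence $\Delta_n=\{t:|f(t)|\le n\}$, which bounds both $f$ and $f\chi_K$. For $x\in\cK$, the commutation and the bounded-function identity give $\II_{M_P}(f\chi_{\Delta_n})M_{\id_{\cH}}(K)x=M_{\id_{\cH}}(K)\II_{M_P}(f\chi_{\Delta_n})x=\II_{M_P}\big((f\chi_K)\chi_{\Delta_n}\big)x$; letting $n\to\infty$ and invoking Theorem~\ref{izrek3}(2) on both ends (the left converges since $M_{\id_{\cH}}(K)x\in\cD(\II_{M_P}(f))$, the right since $x\in\cD(\II_{M_P}(f\chi_K))$) produces the identity $\II_{M_P}(f)M_{\id_{\cH}}(K)=\II_{M_P}(f\chi_K)$. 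For the inclusion I take $x\in\cD(\II_{M_P}(f))$ and push the bounded operator $M_{\id_{\cH}}(K)$ through the defining limit $\II_{M_P}(f)x=\lim_n\II_{M_P}(f\chi_{\Delta_n})x$, obtaining $M_{\id_{\cH}}(K)\II_{M_P}(f)x=\lim_n\II_{M_P}\big((f\chi_K)\chi_{\Delta_n}\big)x=\II_{M_P}(f\chi_K)x=\II_{M_P}(f)M_{\id_{\cH}}(K)x$; since the left-hand map has domain $\cD(\II_{M_P}(f))$ while the right is everywhere defined, this is exactly $M_{\id_{\cH}}(K)\II_{M_P}(f)\subseteq\II_{M_P}(f)M_{\id_{\cH}}(K)$.

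For the core statement I would use that density of $\cD_0$ forces, for each $x$, a sequence of compacts $K_n$ (taken increasing by unions) with $M_{\id_{\cH}}(K_n)x\to x$: as in the proof of Proposition~\ref{gostost}, $\|x-M_{\id_{\cH}}(K)x\|^2=\|x\|^2-(M_{\id_{\cH}})_{x,x}(K)$ and density gives $\sup_{K\in\scrK}(M_{\id_{\cH}})_{x,x}(K)=\|x\|^2$. Given $x\in\cD(\II_{M_P}(f))$, put $x_n:=M_{\id_{\cH}}(K_n)x\in\cD_0$; then $x_n\to x$, and by the inclusion just proved $\II_{M_P}(f)x_n=M_{\id_{\cH}}(K_n)\II_{M_P}(f)x\to\II_{M_P}(f)x$, the convergence being the same density fact applied to the vector $\II_{M_P}(f)x$. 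Thus $x_n\to x$ in the graph norm, so $\cD_0$ is a core for $\II_{M_P}(f)$.

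I expect the genuine obstacle to be bookkeeping rather than a single hard idea. The delicate point is that $\cD_0$ sees only $M_{\id_{\cH}}$ while the conclusion concerns $\II_{M_P}(f)$ for every projection $P$, so one must route everything through $M_{\id_{\cH}}(K)M_P(\Delta)=M_P(K\cap\Delta)$. A related care point is normalization: $M_P(X)=M(X)(P)$ need not be $\id_{\cK}$, so strictly $M_P$ is a spectral measure with total projection $M_P(X)\preceq\id_{\cK}$, and the calculus of Subsection~\ref{subsection1} must be read as applying to $M_P$ on the subspace $M_P(X)\cK$, with $\II_{M_P}(f)$ vanishing on its complement; the limit arguments above are unaffected because any vector annihilated by all $M_P(\Delta)$ contributes zero to every $M_P$-integral. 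Verifying these domain identifications precisely is where the main effort lies.
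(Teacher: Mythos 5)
Your proof is correct, and it takes a genuinely different route from the paper's. The paper never works with an arbitrary compact $K$ directly: it takes the bounding sets $\Delta_n$ of $f$, uses the multiplicativity of the bounded integral (\cite[Proposition 7.2]{Zalar2014}) to get $\II_{M_P}(f\chi_{\Delta_n})x=\II_{M_P}(f)M(\Delta_n)(\id_{\cH})x$ and the commutation with $M(\Delta_n)(\id_{\cH})$ on $\cD(\II_{M_P}(f))$, deduces that $\bigcup_n M(\Delta_n)(\id_{\cH})\cK$ is a core, and only then descends to $\cD_0$ by choosing compacts $K_n\subseteq\Delta_n$ via $\sigma$-compactness of $X$ or regularity of $M$, with the quantitative estimate $\|M(\Delta_n)(\id_{\cH})x-M(K_n)(\id_{\cH})x\|\le 1/(n\|f\|_{\Delta_n,\infty})$. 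You instead prove the displayed identity for every compact $K$ straight from the defining relation $M_P(\Delta_1)M_Q(\Delta_2)=M_{PQ}(\Delta_1\cap\Delta_2)$ with $Q=\id_{\cH}$ (plus simple-function approximation for bounded functions), observe that $M$-integrability of $f$ forces $\II_{M_P}(f\chi_K)$ to be everywhere defined and hence bounded, and then obtain the core property from density of $\cD_0$ alone. Your version buys economy and slightly more generality: it runs under the standing assumption of this section (density of $\cD_0$) without a fresh appeal to regularity or $\sigma$-compactness, and it proves the commutation statement of the lemma for all compact $K$ explicitly, which the paper's computation establishes only for the sets $\Delta_n$ and the chosen $K_n$. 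What the paper's route buys is that the core property falls out of the bounding-sequence machinery of Theorem \ref{izrek3} with no separate density argument. One step of yours needs tightening: your compacts $K_n$ were chosen so that $M_{\id_{\cH}}(K_n)x\to x$, but you then assert $M_{\id_{\cH}}(K_n)\II_{M_P}(f)x\to\II_{M_P}(f)x$ ``by the same density fact applied to $\II_{M_P}(f)x$'', whereas that fact produces a possibly different sequence of compacts; this is repaired by choosing compacts for both vectors and replacing $K_n$ by the unions, since $\|y-M(K)(\id_{\cH})y\|^2=\|y\|^2-(M_{\id_{\cH}})_{y,y}(K)$ is non-increasing in $K$, so the enlarged sequence works for $x$ and $\II_{M_P}(f)x$ simultaneously.
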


\begin{proof}
	Let $x\in \cK$ be arbitrary. Let $\Delta_n$ be the bounding sequence of $f$.
	By the boundedness of the function $f\chi_{\Delta_m}\chi_{\Delta_n}$ and by
	\cite[Proposition 7.2]{Zalar2014}, we have the following chain of equalities 
	$(\int_{X}{f\chi_{\Delta_n}\otimes P}\;dM)x 
				=(\int_{X}{f\chi_{\Delta_n}\chi_{\Delta_m}\otimes P}\;dM)x
				=(\int_{X}{f\chi_{\Delta_m}\otimes P}\;dM)M(\Delta_n)(\id_{\cH})x
				=M(\Delta_n)(\id_{\cH})(\int_{X}{f\chi_{\Delta_m}\otimes P}\;dM)x,$
	where $m$ is greater than $n$.
	Letting $m\to \infty$, we conclude that 
		$(\int_{X}{f\chi_{\Delta_n}\otimes P}\;dM)x
		=(\int_{X}{f	\otimes P}\;dM)M(\Delta_n)(\id_{\cH})x.$
	For $x\in \cD(\II_{M_P}(f))$, again sending $m\to \infty$, it follows that
		$(\int_{X}{f	\otimes P}\;dM)M(\Delta_n)(\id_{\cH})x
			=M(\Delta_n)(\id_{\cH})(\int_{X}{f\otimes P}\;dM)x.$
	Since $M(\Delta_n)(\id_{\cH})x\rightarrow x$ and 
		$\II_{M_P}(f)M(\Delta_n)(\id_{\cH})x=M(\Delta_n)(\id_{\cH})\II_{M_P}(f)x\rightarrow
		\II_{M_P}(f)x$
	for $x\in \cD(\II_{M_P}(f))$, the linear subspace 
		$\cup_{n=1}^{\infty} M(\Delta_n)(\id_{\cH})\cK$ is a core for $\II_{M_P}(f)$.
	By the $\sigma$-compactness of $X$ or regularity of $M$, we have a sequence of compact sets $K_n$,
	such that $\|M(\Delta_n)(\id_{\cH})x-M(K_n)x\|\leq \frac{1}{n\|f\|_{\Delta_n,\infty}}$ and 
	$\|(\int_{X}{f\chi_{\Delta_n}\otimes P}\;dM)x-
		(\int_{X}{f\chi_{K_n}\otimes P}\;dM)x\|
		=\|(\int_{X}{f\chi_{\Delta_n\setminus K_n}\otimes P}\;dM)x\|
		\leq \|f\|_{\Delta_n,\infty}\frac{1}{n\|f\|_{\Delta_n,\infty}}=\frac{1}{n}.$
	Hence, $M(K_n)x\to x$ and $(\int_{X}{f\chi_{K_n}\otimes P}\;dM)x\to \II_{M_P}(f)x$.
	Thus, $\cD_0$ is a core for $\II_{M_P}(f)$.
\end{proof}

\begin{proposition}\label{proposition3}
	For $f\in \cI(M)$ and a positive operator $A\in \cW_+$ without a finite spectral decomposition, the definition of 	
	$\psi(f,A)$:
		\benu
			\item is well-defined,
			\item does not depend on the choice of the sequence $S_\ell(A)$.
		\eenu
\end{proposition}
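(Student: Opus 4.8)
The plan is to prove both assertions at once by establishing a single quantitative estimate that controls how $\psi(f,B)x$ varies when $B$ ranges over the Riemann sums of $A$. Fix $x\in\cD_0$, say $x=M(K)(\id_{\cH})x$ for a compact set $K\in\scrK$, and recall that $x\in\cD(\II_{M_P}(f))$ for every projection $P$ because $f$ is $M$-integrable. By Lemma \ref{lema5} we have $\II_{M_P}(f)x=\II_{M_P}(f\chi_K)x$, so throughout we may replace $f$ by $f\chi_K$; moreover $M(K^c)(\id_{\cH})x=0$ forces $(M_{\id_{\cH}})_{x,x}$ to be concentrated on $K$, whence $c_x:=\int_K|f|^2\,d(M_{\id_{\cH}})_{x,x}=\int_X|f|^2\,d(M_{\id_{\cH}})_{x,x}<\infty$.

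The key computation is an orthogonality relation. If $\{R_k\}$ are mutually orthogonal hermitian projections in $\cW$ (the spectral projections of $A$ attached to a partition), then for $k\neq k'$ we have $R_kR_{k'}=0$, and using $\II_{M_{R_{k'}}}(\overline{f\chi_K})\,\II_{M_{R_k}}(f\chi_K)=\II_{M_{R_{k'}R_k}}(|f|^2\chi_K)=0$ the vectors $\II_{M_{R_k}}(f\chi_K)x$ are pairwise orthogonal, while for each $k$ we get $\|\II_{M_{R_k}}(f\chi_K)x\|^2=\langle\II_{M_{R_k}}(|f|^2\chi_K)x,x\rangle=\int_K|f|^2\,d(M_{R_k})_{x,x}$. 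Here I use parts (i) and (iv) of Theorem \ref{izrek2} for the spectral measures $M_{R_k}$, together with the defining relation $M_P(\Delta_1)M_Q(\Delta_2)=M_{PQ}(\Delta_1\cap\Delta_2)$, which yields the multiplicativity $\II_{M_P}(g)\II_{M_Q}(h)=\II_{M_{PQ}}(gh)$ of \cite[Proposition 7.2]{Zalar2014} for bounded $g,h$. Since $\sum_kM_{R_k}=M_{\id_{\cH}}$, these identities give, for any real coefficients $c_k$,
\[
\Big\|\sum_k c_k\,\II_{M_{R_k}}(f\chi_K)x\Big\|^2=\sum_k c_k^2\int_K|f|^2\,d(M_{R_k})_{x,x}.
\]
This is where the main obstacle lies: the function $f\chi_K$ need not be bounded, so the multiplicativity must first be pushed from bounded functions to $f\chi_K$. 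I would do this by inserting a bounding sequence $\Delta_n$ for $f$, verifying the identities for the bounded functions $f\chi_{K\cap\Delta_n}$, and passing to the limit $n\to\infty$ via Lemma \ref{lema5} and $x\in\cD(\II_{M_{R_k}}(f))$.

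Granting the estimate, both claims follow from a common-refinement argument. Given two Riemann sums $B=\sum_k\beta_kP_k$ and $B'=\sum_{k'}\beta'_{k'}P'_{k'}$ of $A$ with meshes $\delta,\delta'$, all the projections $P_k,P'_{k'}$ are spectral projections of the \emph{single} operator $A$ and hence commute, so $\{Q_{k,k'}:=P_kP'_{k'}\}$ is a family of mutually orthogonal projections with $\sum_{k,k'}Q_{k,k'}=\id_{\cH}$ and $\psi(f,B)x-\psi(f,B')x=\sum_{k,k'}(\beta_k-\beta'_{k'})\,\II_{M_{Q_{k,k'}}}(f\chi_K)x$. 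Whenever $Q_{k,k'}\neq0$ the corresponding spectral intervals overlap, so $|\beta_k-\beta'_{k'}|\le\delta+\delta'$, and the orthogonality estimate then gives
\[
\|\psi(f,B)x-\psi(f,B')x\|^2\le(\delta+\delta')^2\,c_x.
\]

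Applying this with $B=S_\ell(A)$ and $B'=S_m(A)$ shows that $(\psi(f,S_\ell(A))x)_\ell$ is Cauchy in $\cK$, since $|\cZ_\ell|\to0$; this proves well-definedness of $\psi(f,A)x=\lim_\ell\psi(f,S_\ell(A))x$ for every $x\in\cD_0$, establishing (1). Applying it instead with $B=S_\ell(A)$ and $B'=S'_\ell(A)$ for two limiting sequences shows that their limits coincide, establishing (2). Linearity of $x\mapsto\psi(f,A)x$ on $\cD_0$ is inherited from the $\psi(f,S_\ell(A))$, so the only genuinely technical point is the extension of multiplicativity to the unbounded function $f\chi_K$ flagged above; everything else is routine estimation.
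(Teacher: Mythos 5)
Your proposal is correct and takes essentially the same route as the paper: both arguments write the difference of two Riemann sums of $A$ as a linear combination $\sum_i \lambda_i P_i$ of mutually orthogonal hermitian projections with coefficients controlled by the meshes, and both then use orthogonality of the vectors $\psi(f,P_i)x$ to obtain the Cauchy-type estimate $\|\sum_i \lambda_i \psi(f,P_i)x\| \le \max_i|\lambda_i|\,\|\psi(f,\id_{\cH})x\|$ (your constant $c_x$ is exactly $\|\psi(f,\id_{\cH})x\|^2$), which settles (1) and (2) simultaneously. The only differences are organizational: you form an explicit common refinement $Q_{k,k'}=P_kP'_{k'}$ for both claims and route the orthogonality through multiplicativity of spectral integrals (the extension to the unbounded function $f\chi_K$ that you flag is precisely what the paper's Lemma~\ref{lema1} provides via bounded approximations $f\chi_{\Delta_n}$), whereas the paper exploits the nestedness $\mathcal Z_\ell\subset\mathcal Z_{\ell+1}$ of the partitions and invokes Lemma~\ref{lema1} directly.
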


In the proof of Proposition \ref{proposition3} we need the following lemma.

\begin{lemma} \label{lema1}
	For $f\in \cI(M)$ and $P,Q\in \cW_p$ orthogonal hermitian projections it is true that:
	\benu
		\item $\psi(f, P+Q)=\psi(f,P)+\psi(f,Q).$
		\item $\im(\psi(f,P))\perp \im(\psi(f,Q)),$
	\eenu
	where $\im(T)$ denotes the image of the operator $T$.
\end{lemma}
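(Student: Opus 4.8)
The plan is to prove both statements directly from the definition of $\psi(f,P)$ as the restriction of the spectral integral $\II_{M_P}(f)$ to the core $\cD_0$, exploiting the additivity properties of the family $\{M_P\}_{P\in(\cW)_p}$ established in the construction of the non-negative spectral measure. For part (i), the key observation is that $P+Q$ is again a hermitian projection precisely because $P\perp Q$, so $\psi(f,P+Q)$ is well-defined as $\II_{M_{P+Q}}(f)|_{\cD_0}$. I would first reduce to bounded functions: since $\cD_0=\bigcup_{K\in\scrK}M(K)(\id_\cH)\cK$, it suffices by Lemma \ref{lema5} to verify the identity on each piece $M(K)(\id_\cH)\cK$, where $f$ acts as the bounded function $f\chi_K$ and the integrals are genuine bounded operators. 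On this core the additivity $M_{P+Q}(\Delta)=M_P(\Delta)+M_Q(\Delta)$ follows from condition (1) of Theorem \ref{karakterizacija-nenegativnih-spektralnih-mer} applied to the identity $1\cdot(P+Q)=1\cdot P+1\cdot Q$ of hermitian projections; integrating the bounded function $f\chi_K$ against this additive family of spectral measures and using bilinearity of $\cB$ from the bounded theory then yields $\psi(f,P+Q)x=\psi(f,P)x+\psi(f,Q)x$ for every $x\in\cD_0$.

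For part (ii), I would use the spectral-theoretic fact that orthogonality of the projections $P$ and $Q$ in $\cW_1$ forces the corresponding spectral measures $M_P$ and $M_Q$ to have orthogonal ranges in $\cW_2=B(\cK)$. Concretely, from the relation $M_P(\Delta_1)M_Q(\Delta_2)=M_{PQ}(\Delta_1\cap\Delta_2)=M_0(\Delta_1\cap\Delta_2)=0$ (since $PQ=0$ by orthogonality and $M_0\equiv 0$), one gets $M_P(\Delta_1)M_Q(\Delta_2)=0$ for all Borel sets $\Delta_1,\Delta_2$. Taking adjoints and using that each $M_P(\Delta)$ is a hermitian projection gives $M_Q(\Delta_2)M_P(\Delta_1)=0$ as well, so the ranges of $M_P(\Delta_1)$ and $M_Q(\Delta_2)$ are mutually orthogonal. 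I would then express $\psi(f,P)x$ and $\psi(f,Q)y$, for $x,y\in\cD_0$, as limits of integrals of bounded truncations $f\chi_{K_n}$, which are finite linear combinations of the projections $M_P(\Delta)$ and $M_Q(\Delta)$ respectively; orthogonality of these building blocks passes to the inner product $\langle\psi(f,P)x,\psi(f,Q)y\rangle=0$ in the limit, establishing $\im(\psi(f,P))\perp\im(\psi(f,Q))$.

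The main obstacle I anticipate is handling the passage to the limit carefully, since $f$ need not be bounded and $\psi(f,P)$ is a genuinely unbounded operator in general. The orthogonality computation in part (ii) is clean for bounded functions on $\cD_0$ because $\psi(f,P)x$ then lies in the range of $\II_{M_P}(f\chi_{K})$ for a compact $K$, and this range is contained in the closed span of the ranges of the $M_P(\Delta)$; but one must confirm that taking the inner product and sending $n\to\infty$ is legitimate. Here Lemma \ref{lema5} is essential: it guarantees that $\cD_0$ is a core and that $M(K_n)(\id_\cH)$ commutes appropriately with the spectral integral, so that the truncated integrals converge to $\psi(f,P)x$ in $\cK$. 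Since each truncated integral already satisfies the orthogonality relation exactly, continuity of the inner product delivers the conclusion in the limit.
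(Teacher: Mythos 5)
Your part (2) is essentially correct, and in fact it states the key point more precisely than the paper's own last line does: since $PQ=0$, the multiplicativity axiom gives $M_P(\Delta_1)M_Q(\Delta_2)=M_{PQ}(\Delta_1\cap\Delta_2)=0$ for \emph{all} Borel sets $\Delta_1,\Delta_2$ (not just disjoint ones), and since every vector $\psi(f,P)x$ is a limit of linear combinations of vectors $M_P(\Delta)z$, the orthogonality passes to the images of $\psi(f,P)$ and $\psi(f,Q)$. This is the same mechanism the paper uses.

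Part (1), however, contains a genuine gap in the reduction step. You claim that on a piece $M(K)(\id_{\cH})\cK$ of $\cD_0$ the function ``$f$ acts as the bounded function $f\chi_K$ and the integrals are genuine bounded operators.'' But $f\in\cI(M)$ is only required to be $\Bor(X)$-measurable with $\cD_0$ contained in the domain of each $\II_{M_P}(f)$; it need not be continuous, so $f$ need not be bounded on a compact set $K$, and $f\chi_K$ is in general an unbounded function. Consequently you cannot invoke the bilinearity of the bounded-case integral from \cite[Proposition 3.5]{Zalar2014} for $f\chi_K$: that theory applies to functions that are integrable in the essentially bounded sense of \cite{Zalar2014}, and whether $f\chi_K$ has that property is precisely what would need to be proved. (The operator-level statement can in fact be salvaged: using the identity $(M_P)_{y,y}(\Delta)=(M_P)_{x,x}(\Delta\cap K)$ for $y=M(K)(\id_{\cH})x$, one checks that $\II_{M_P}(f\chi_K)$ is everywhere defined, hence bounded because spectral integrals are closed; but no such argument appears in your proposal, and without it the reduction collapses.)

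The paper avoids the issue entirely by working with a \emph{bounding sequence} $(\Delta_n)$ for $f$, e.g.\ $\Delta_n=\{t\in X\colon |f(t)|\le n\}$, on which $f$ is bounded by construction: the additivity $\II_{M_{P+Q}}(f\chi_{\Delta_n})=\II_{M_P}(f\chi_{\Delta_n})+\II_{M_Q}(f\chi_{\Delta_n})$ then does follow from \cite[Proposition 3.5]{Zalar2014}, and one lets $n\to\infty$, using that $\cD(\II_{M_{P+Q}}(f))=\cD(\II_{M_P}(f))\cap\cD(\II_{M_Q}(f))$ (which the paper deduces from the orthogonality of the images of $M_P$ and $M_Q$) so that all three limits exist. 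If you replace your compact sets $K$ by the sets $\Delta_n$ of a bounding sequence, your argument is repaired and becomes essentially the paper's proof; on $\cD_0$ it is even slightly simpler, since $\cD_0$ lies in all three domains directly by the definition of $\cI(M)$.
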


\begin{proof}
	Since $P,Q$ are orthogonal hermitian projections, $P+Q$ is also a hermitian projection. 
	Since $M$ is a non-negative spectral measure,
	$\mathrm{Im}(M_P)$ and $\mathrm{Im}(M_Q)$ are orthogonal (Here  
	$\mathrm{Im}(M_P)$, $\mathrm{Im}(M_P)$ denote the images of $M_P$, $M_Q$, 
	i.e.,
		$\mathrm{Im}(M_P):=\bigcup_{\Delta\in\Bor(X)}M_P(\Delta)\cK$
	and analoguously for $M_Q$.).
	Therefore, by the definition of $\cD(\II(f))$ (see Theorem \ref{izrek3}), 
		$\cD(\II_{M_{P+Q}}(f))=\cD(\II_{M_{P}}(f))\cap \cD(\II_{M_{Q}}(f)).$
	Let $\Delta_n$ be a bounding sequence of $f$ (w.r.t.\ $M_{\id_{\cH}}$ and hence all $M_P$).
	Since $f\chi_{\Delta_n}$ is a bounded measurable function,
		$\II_{M_{P+Q}}(f\chi_{\Delta_n})=\II_{M_{P}}(f\chi_{\Delta_n})+
		\II_{M_{Q}}(f\chi_{\Delta_n}),$
		 by \cite[Proposition 3.5]{Zalar2014}.
	Hence, by 
	\begin{eqnarray*}
		&&\II_{M_{P+Q}}(f)x 
			=\lim_{n\to\infty}\II_{M_{P+Q}}(f\chi_{\Delta_n})x=
				\lim_{n\to\infty}(\II_{M_{P}}(f\chi_{\Delta_n})x+\II_{M_{Q}}(f\chi_{\Delta_n})x)\\
			&=&\lim_{n\to\infty}\II_{M_{P}}(f\chi_{\Delta_n})x+
				\lim_{n\to\infty}\II_{M_{Q}}(f\chi_{\Delta_n})x
			=\II_{M_{P}}(f)x + \II_{M_{Q}}(f)x 
	\end{eqnarray*}
	for every $x\in \cD(\II_{M_{P+Q}}(f))$, it follows that
		$\II_{M_{P+Q}}(f)=\II_{M_{P}}(f)+\II_{M_{Q}}(f)$
	and by the definition of $\psi$ also 
		$\psi(f,P+Q)=\psi(f, P)+\psi(f,Q).$
	Since $M$ is a non-negative spectral measure, $M_P(\Delta)M_Q(\Delta')=0$ for every 
	$\Delta, \Delta'\in \Bor(X)$ such that $\Delta \cap \Delta'=\emptyset$, and hence also
		$\im(\psi(f,P))\perp \im(\psi(f,Q)).$
\end{proof}

\begin{proof}[Proof of Proposition \ref{proposition3}]
	Let us first prove that $(\psi(f,S_\ell(A))x)_{\ell\in\NN}$ is a Cauchy sequence.
	For $\ell',\ell\in\NN$, $\ell'>\ell$, we have
	$S_\ell(A)-S_{\ell'}(A)=\sum_{i=1}^{m_{\ell,\ell'}}\lambda_i P_i$ 
	for some $\lambda_i\in \RR$, mutually
	orthogonal hermitian projections $P_i$ and $m_{\ell,\ell'}\in\NN$. 
	Given $\epsilon>0$ and choosing $\ell$ great enough we can 
	achieve $\left|\lambda_i\right|<\epsilon$ for every $i=1,\ldots,m$. 
	Since $\id_{\cH}=P+(\id_{\cH}-P)$, where $P$, $\id_{\cH}-P$ are mutually orthogonal hermitian 
	projections, it follows that
		$\|\psi(f,P)x\|\leq \|\psi(f,\id_\cH)x\|$ 
	for every $x\in \cD_0$. We have
		\begin{eqnarray*}
			\|\sum_{i=1}^m\lambda_i\psi(f,P_i)x\|
				&\leq& \max_{i}\left|\lambda_i\right| \|\sum_{i=1}^m\psi(f,P_i)x\|
					\leq \max_{i}\left|\lambda_i\right| \|\psi(f,\id_{\cH})x\|\\
				&\leq& \epsilon \|\psi(f,\id_{\cH})x\|,
		\end{eqnarray*}
	where the first inequality follows by the fact that 
	$\im(\psi(f,P_i))\perp \im(\psi(f,P_j))$ for $i\neq j$ (see Lemma \ref{lema1}.(2))
	and the second by the fact that $\sum_{i=1}^m P_i$ is a hermitian projection.
	Since $\epsilon>0$ was arbitrary, $(\psi(f,S_\ell(A))x)_{\ell\in\NN}$ is a Cauchy sequence 
	and hence $\lim_{\ell\to \infty}\psi(f,S_\ell(A))x$ exists. This proves (1).
	
	Now we will prove the independence from the sequence $S_\ell(A)$. Let $S'_\ell(A):=\sum_{k=1}^{m_\ell}
	\zeta'_{k,l}P'_{k,l}$ be another sequence
	converging to $A$ in norm, where $\zeta'_{k,l}\geq 0$
	are non-negative and $P'_{k,l}$ are mutually orthogonal hermitian projections. We will prove that 
		$(\psi(f,S_\ell(A))x-\psi(f,S'_\ell(A))x)_{\ell\in\NN}$
	converges to $0$.
	We have
		$S_\ell(A)-S'_\ell(A)=\sum_{i=1}^{p}\mu_i Q_i,$
	where $\mu_i\in \RR$ and $Q_i$ are
	mutually orthogonal hermitian projections.
	Therefore
		$\psi(f,S_\ell(A))x-\psi(f,S'_\ell(A))x=\sum_{i=1}^{p}\mu_i \psi(f,Q_i)x.$
	Given $\epsilon>0$ and choosing $\ell$ great enough we can 
	achieve $\left|\mu_i\right|<\epsilon$ for every $i=1,\ldots,p$.
	As for part (i) we estimate
		$\|\sum_{i=1}^p\mu_i\psi(f,Q_i)x\|\leq \epsilon \|\psi(f,\id_{\cH})x\|.$
	Therefore $\psi(f,S_\ell(A))x-\psi(f,S'_\ell(A))x$ converges to $0$ which proves (2).
\end{proof}

\begin{proposition} \label{bilinearnost-psija}
	The map $\psi$ is bilinear.
\end{proposition}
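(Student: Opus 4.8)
The plan is to reduce the bilinearity of $\psi$, whose definition on the second argument proceeds through spectral decompositions and limits, to the already-established bilinearity of the \emph{bounded} integral $(g,A)\mapsto\int_X g\;dM_A$ (for $g$ bounded measurable; see \cite[Propositions 3.5 and 7.2]{Zalar2014}) together with the linearity of each $M(\Delta)\in B(\cW,B(\cK))$ in its operator argument. The engine of the argument is the following \emph{representation formula}, which I would prove first: for every $f\in\cI(M)$, every $A\in\cW$ and every $x\in\cD_0$, writing $(\Delta_n)$ for any bounding sequence of $f$,
\[
	\psi(f,A)x=\lim_{n\to\infty}\Big(\int_X f\chi_{\Delta_n}\;dM_A\Big)x.
\]
Once this is available, bilinearity is immediate. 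For the first argument one chooses a common bounding sequence $\Delta_n=\{|f|\le n,\ |g|\le n\}$ for $f$, $g$ and $\alpha f+\beta g$, and uses linearity of the bounded integral in the integrand. For the second argument one uses $M_{\alpha A+\beta B}(\Delta)=M(\Delta)(\alpha A+\beta B)=\alpha M_A(\Delta)+\beta M_B(\Delta)$, whence $\int_X f\chi_{\Delta_n}\;dM_{\alpha A+\beta B}=\alpha\int_X f\chi_{\Delta_n}\;dM_A+\beta\int_X f\chi_{\Delta_n}\;dM_B$, and passes to the limit (all three limits exist by the formula).

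I would establish the formula in increasing generality, matching the layers in the definition of $\psi$. For $A=P$ a hermitian projection it is exactly Theorem \ref{izrek3}(2), since $x\in\cD_0\subseteq\cD(\II_{M_P}(f))$ and $\int_X f\chi_{\Delta_n}\;dM_P=\II_{M_P}(f\chi_{\Delta_n})$. For $A\in\cW_+$ with a finite spectral decomposition $\sum_k\lambda_k P_k$ it follows by summing finitely many such identities and using bilinearity of the bounded integral. For a general $A\in\cW_+$ I would first show that $\lim_n(\int_X f\chi_{\Delta_n}\;dM_A)x$ exists: from $0\preceq M_A(\Delta)\preceq\|A\|\,M_{\id_{\cH}}(\Delta)$ and the orthogonality of the ranges of the $\II_{M_{R_k}}(\cdot)$ for the mutually orthogonal projections $R_k$ of a limiting sequence (Lemma \ref{lema1}(2)), one obtains, for bounded $g$,
\[
	\Big\|\Big(\int_X g\;dM_A\Big)x\Big\|\le \|A\|\Big(\int_X|g|^2\;d(M_{\id_{\cH}})_{x,x}\Big)^{1/2},
\]
after which the sequence is Cauchy by dominated convergence, because $f\in L^2((M_{\id_{\cH}})_{x,x})$ by the $M$-integrability of $f$.

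The main obstacle is to identify this limit with $\psi(f,A)=\lim_\ell\psi(f,S_\ell(A))$ for a limiting sequence $S_\ell(A)$, i.e.\ to interchange the limit over $S_\ell(A)$ (from the definition of $\psi$) with the limit over the bounding sequence $\Delta_n$. I would resolve this by a \emph{uniform} estimate: applying the displayed inequality to the self-adjoint operator $S_\ell(A)-A$, whose positive and negative parts have norm at most $\|S_\ell(A)-A\|\le 1/\ell$, gives
\[
	\Big\|\Big(\int_X f\chi_{\Delta_n}\;dM_{S_\ell(A)}\Big)x-\Big(\int_X f\chi_{\Delta_n}\;dM_A\Big)x\Big\|\le \frac{2}{\ell}\Big(\int_X|f|^2\;d(M_{\id_{\cH}})_{x,x}\Big)^{1/2}
\]
for every $n$. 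Letting $n\to\infty$ (the inner limit on the left exists by the finite-decomposition case, and on the right by the previous step) yields $\|\psi(f,S_\ell(A))x-\lim_n(\int_X f\chi_{\Delta_n}\;dM_A)x\|\le \frac{2}{\ell}(\cdots)^{1/2}$, and then $\ell\to\infty$ proves the formula for positive $A$; Proposition \ref{proposition3} guarantees independence of the choice of $S_\ell(A)$.

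Finally, I would extend the formula to arbitrary $A\in\cW$ by writing $A=\re(A)_+-\re(A)_-+i\,\im(A)_+-i\,\im(A)_-$ and using linearity of the bounded integral in $A$ (equivalently, linearity of each $M(\Delta)$); this both delivers the representation for general $A$ and shows it is consistent with the definition of $\psi$ on general operators. With the representation formula in hand for all $A\in\cW$, the bilinearity of $\psi$ follows exactly as described in the first paragraph.
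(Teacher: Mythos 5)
Your proof is correct, and its skeleton coincides with the paper's: your ``representation formula'' $\psi(f,A)x=\lim_{n\to\infty}\bigl(\int_X f\chi_{\Delta_n}\,dM_A\bigr)x$ is exactly the first assertion of the paper's Lemma \ref{lema2}, and, as in the paper, bilinearity is then inherited from the bilinearity of the bounded integral $(g,A)\mapsto\int_X g\,dM_A$ from \cite{Zalar2014}. The differences are in execution. First, the paper proves linearity in the function argument directly from the definition of $\psi$, by expanding the limiting sequence $S_\ell(A)$ and applying Theorem \ref{izrek2}.(2) to each projection $P_{k,\ell}$; it invokes Lemma \ref{lema2} only for linearity in the operator argument, whereas you route both variables through the formula, which is more uniform but makes the formula carry the whole weight. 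Second, and more substantively, the interchange of the two limits (over $S_\ell(A)$ and over the bounding sequence $\Delta_n$) is justified in opposite directions: the paper proves the estimate \eqref{equality4}, which is uniform in $\ell$, and then runs an $\epsilon/3$ argument converging directly to the already-defined $\psi(f,A)x$; you instead prove a bound uniform in $n$, namely $\bigl\|\bigl(\int_X f\chi_{\Delta_n}\,dM_{S_\ell(A)-A}\bigr)x\bigr\|\le\tfrac{2}{\ell}\bigl(\int_X|f|^2\,d(M_{\id_{\cH}})_{x,x}\bigr)^{1/2}$, obtained by applying an $L^2$-estimate to the self-adjoint operator $S_\ell(A)-A$. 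Both routes rest on the same orthogonality fact, Lemma \ref{lema1}.(2). Your version is more quantitative and dispenses with the $\epsilon/3$ bookkeeping, at the price of the preliminary Cauchy/dominated-convergence step needed to know that the $n$-limit exists (the paper avoids this because its argument converges to a quantity already known to exist), and of the small identification, worth stating explicitly, that on $\cD_0$ the value $\psi(g,A)$ for bounded $g$ agrees with the \cite{Zalar2014} integral $\int_X g\,dM_A$, which holds since for bounded integrands the limit over $S_\ell(A)$ converges in operator norm.
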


To prove Proposition \ref{bilinearnost-psija} we need the following lemma.

\begin{lemma}\label{lema2}
	For a function $f\in \cI(M)$, an operator $A\in \cW$ and $x\in\cD_0$, we have
		$$\psi(f,A)x=\lim_{n\to\infty} \psi(f\chi_{\Delta_n},A)x,$$
	where $(\Delta_n)_{n\in\NN}$ is a bounding sequence of $f$ with respect to the spectral measure
	$M_{\id_{\cH}}$. 
	Furthermore, for every $n\in \NN$ and every $x\in\cD_0$ we have
		$$\psi(f\chi_{\Delta_n},A)x=\psi(f,A)M(\Delta_n)(\id_{\cH}).$$
\end{lemma}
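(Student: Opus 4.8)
The plan is to verify both equalities first for $A$ a hermitian projection $P$, and then to carry them through the three stages of the inductive definition of $\psi$: finite spectral decompositions, arbitrary positive operators via limiting sequences, and finally general $A\in\cW$ via the four-term real/imaginary, positive/negative splitting. For $A=P$ the second identity of the lemma is already contained in the chain of equalities in the proof of Lemma \ref{lema5}, which shows $\II_{M_P}(f\chi_{\Delta_n})x=\II_{M_P}(f)\,M(\Delta_n)(\id_\cH)x$, i.e.\ $\psi(f\chi_{\Delta_n},P)x=\psi(f,P)\,M(\Delta_n)(\id_\cH)x$ for all $x\in\cK$. For the first identity with $A=P$, I would use that $\cD_0\subseteq\cD(\II_{M_P}(f))$ (this is precisely the $M$-integrability of $f$) together with the observation that $(\Delta_n)$ is a bounding sequence for $M_P$: indeed $M_P(\Delta)=M_P(\Delta)M_{\id_\cH}(\Delta)$ by the product rule for non-negative spectral measures, so $M_P((\cup_n\Delta_n)^c)=M_P((\cup_n\Delta_n)^c)M_{\id_\cH}((\cup_n\Delta_n)^c)=0$. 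Theorem \ref{izrek3}(2) then yields $\psi(f,P)x=\II_{M_P}(f)x=\lim_{n\to\infty}\II_{M_P}(f\chi_{\Delta_n})x=\lim_{n\to\infty}\psi(f\chi_{\Delta_n},P)x$.

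Both identities then pass to a finite positive spectral decomposition $A=\sum_k\lambda_kP_k$ termwise, since $\psi(\cdot,A)=\sum_k\lambda_k\psi(\cdot,P_k)$ and the limit in the first identity runs over a finite sum. For a positive $A$ without a finite spectral decomposition I would fix a limiting sequence $S_\ell(A)$ and treat the second identity first: by the finite-decomposition case, $\psi(f\chi_{\Delta_n},S_\ell(A))x=\psi(f,S_\ell(A))\,M(\Delta_n)(\id_\cH)x$; writing $x=M(K)(\id_\cH)k\in\cD_0$ one checks that $y_n:=M(\Delta_n)(\id_\cH)x=M(K)(\id_\cH)y_n$ again lies in $\cD_0$, so that letting $\ell\to\infty$ and invoking the definition of $\psi(\cdot,A)$ on $\cD_0$ (Proposition \ref{proposition3}) on both sides gives $\psi(f\chi_{\Delta_n},A)x=\psi(f,A)\,M(\Delta_n)(\id_\cH)x$. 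Passing to general $A\in\cW$ through the defining four-term splitting, the second identity then follows by linearity.

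The first identity for a positive $A$ without a finite decomposition is the main obstacle, since it requires interchanging the two limits in $\lim_n\lim_\ell\psi(f\chi_{\Delta_n},S_\ell(A))x$. I would settle this by a Moore--Osgood argument. Put $g_n:=f\chi_{\Delta_n}$ and $a_{n,\ell}:=\psi(g_n,S_\ell(A))x$; for each fixed $\ell$ the finite-decomposition case gives $\lim_n a_{n,\ell}=\psi(f,S_\ell(A))x$. The crucial point is that the convergence $\lim_\ell a_{n,\ell}=\psi(g_n,A)x$ is uniform in $n$: writing $S_\ell(A)-S_{\ell'}(A)=\sum_i\mu_iQ_i$ with mutually orthogonal projections $Q_i$, the estimate from the proof of Proposition \ref{proposition3} (using the orthogonality of images in Lemma \ref{lema1}(2) and $\sum_iQ_i\preceq\id_\cH$) gives $\|a_{n,\ell}-a_{n,\ell'}\|\le\max_i|\mu_i|\cdot\|\psi(g_n,\id_\cH)x\|$, while $\|\psi(g_n,\id_\cH)x\|=\|M_{\id_\cH}(\Delta_n)\II_{M_{\id_\cH}}(f)x\|\le\|\II_{M_{\id_\cH}}(f)x\|=\|\psi(f,\id_\cH)x\|$ is a bound independent of $n$. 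Since $\max_i|\mu_i|\to0$ as $\ell,\ell'\to\infty$, the convergence in $\ell$ is uniform in $n$, so the iterated limits agree and $\lim_n\psi(g_n,A)x=\lim_\ell\psi(f,S_\ell(A))x=\psi(f,A)x$, which is the first identity. A final appeal to linearity along the four-term splitting extends it to arbitrary $A\in\cW$ and completes the proof.
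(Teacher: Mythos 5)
Your proof is correct, and it is essentially a mirror image of the paper's argument: both reduce the first identity to an interchange of the two limits $\lim_n$ (over the bounding sequence $\Delta_n$) and $\lim_\ell$ (over the limiting sequence $S_\ell(A)$), and both draw all quantitative control from the same source, namely the orthogonality statements of Lemma \ref{lema1} combined with the Cauchy-type estimate from the proof of Proposition \ref{proposition3}. The difference lies in which convergence is shown to be uniform. The paper proves that the $n$-convergence is uniform in $\ell$, via the estimate $\|\psi(f,S_\ell(A))x-\psi(f\chi_{\Delta_n},S_\ell(A))x\|\leq\|A\|\,\|\psi(f,\id_{\cH})x-\psi(f\chi_{\Delta_n},\id_{\cH})x\|$, and then concludes with an explicit three-epsilon argument (choosing $\ell=L(n)=\max\{L_1(n),L_2(n)\}$); you instead prove that the $\ell$-convergence is uniform in $n$, using the bound $\|\psi(f\chi_{\Delta_n},\id_{\cH})x\|=\|M_{\id_{\cH}}(\Delta_n)\II_{M_{\id_{\cH}}}(f)x\|\leq\|\psi(f,\id_{\cH})x\|$ coming from Theorem \ref{izrek3}(3) and contractivity of the spectral projection, and then invoke Moore--Osgood. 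The two routes are equally rigorous; yours has the advantage that the uniform bound is a one-line consequence of the projection identity, and your stage-by-stage build-up through the definition of $\psi$ forces you to verify two points the paper passes over silently: that $(\Delta_n)$ is a bounding sequence for every $M_P$ (your argument via the product rule $M_P(\Delta)=M_P(\Delta)M_{\id_{\cH}}(\Delta)$ is exactly the justification the paper's parenthetical remark in Lemma \ref{lema1} needs), and that $M(\Delta_n)(\id_{\cH})x$ lies in $\cD_0$, without which the right-hand side $\psi(f,A)M(\Delta_n)(\id_{\cH})x$ of the second identity is not even defined, since $\psi(f,A)$ only acts on $\cD_0$. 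The paper's version is somewhat shorter, as it reduces to positive $A$ at the outset and obtains the second identity in one line from the finite-decomposition case; but on the substance the two proofs use the same key lemmas and the same interchange-of-limits idea.
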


\begin{proof}
	By the definition of $\psi(f,A)$ for an arbitrary operator $A\in \cW$, we may assume $A$ is
	a positive operator.
	For $f\in \cI(M)$, a positive operator $A\in \cW_+$ and $x\in\cD_0$, it holds by the definition that
	$\psi(f,A)x=\lim_{\ell\to\infty}\psi(f,S_\ell(A))x$, where
	$S_\ell(A)$ is a limiting sequence of $A$.
	By Theorem \ref{izrek3}, the equality $\psi(f,P)x=\lim_{n\to\infty}\psi(f\chi_{\Delta_n},P)x$
	holds for every hermitian projection $P\in \cW_p$, every bounding sequence $(\Delta_n)_n$ of $f$
	with respect to the spectral measure $M_{\id_{\cH}}$ and every $x\in \cD_0$.
	Using Lemma \ref{lema1} it is also true that for every hermitian projection $P\in \cW_p$ we have
		$$\|\psi(f,P)x-\psi(f\chi_{\Delta_n},P)x\|\leq
			\|\psi(f,\id_\cH)x-\psi(f\chi_{\Delta_n},\id_\cH)x\|,
		$$
	and hence, by an analogous estimate as in the proof of Proposition \ref{proposition3},
		\begin{eqnarray}\label{equality4}
			&&\|\psi(f,S_\ell(A))x-\psi(f\chi_{\Delta_n},S_\ell(A))x\|\leq
			\|A\|\|\psi(f,\id_\cH)x-\psi(f\chi_{\Delta_n},\id_\cH)x\|.
		\end{eqnarray}
	For every $\epsilon>0$ there exists $N\in \NN$ such that for every $n\geq N$
	we have 
		$$\|\psi(f,\id_\cH)x-\psi(f\chi_{\Delta_n},\id_\cH)x\|\leq 
			\frac{\epsilon}{3\|A\|}.$$
	There also exists $L_1(n)\in \NN$, such that for every $\ell\geq L_1(n)$ we have
		$$\|\psi(f,A)x-\psi(f,S_{\ell}(A)) x\|\leq 
			\frac{\epsilon}{3}.$$
	For every $n\in \NN$ there also exists $L_2(n)\in \NN$, such that for every $\ell\geq L_2(n)$
		$$\|\psi(f\chi_{\Delta_n},A)x-\psi(f\chi_{\Delta_n},S_{\ell}(A)) x\|\leq 
			\frac{\epsilon}{3}.$$
	Let $L(n)=\max\{L_1(n),L_2(n)\}$.
	Hence, for every $\epsilon>0$ there exists $N_\epsilon\in \NN$, such that for
	every $n\geq N_\epsilon$ we have
		\begin{eqnarray*}
			\|\psi(f,A)x-\psi(f\chi_{\Delta_n},A) x\|
			&\leq&
			\|\psi(f,A)x-\psi(f,S_{L(n)}(A)) x\|\\
			&+&\|\psi(f,S_{L(n)}(A))x-\psi(f\chi_{\Delta_n},S_{L(n)}(A)) x\|\\
			&+&	\|\psi(f\chi_{\Delta_n},S_{L(n)}(A))x-\psi(f\chi_{\Delta_n},A) x\|\\
			&\leq&\epsilon.
		\end{eqnarray*}
	Hence, $\psi(f,A)x=\lim_{n\to\infty}\psi(f\chi_{\Delta_n},A)x.$
	
	For the other part, 
	\begin{eqnarray*}
		\psi(f\chi_{\Delta_n},A)x &=&\lim_{\ell\to\infty}\psi(f\chi_{\Delta_n},S_\ell(A))x=
		\lim_{\ell\to\infty}\psi(f,S_\ell(A))M(\Delta_n)(\id_{\cH})x\\
		&=&\psi(f,A)M(\Delta_n)(\id_{\cH})x. \qedhere
	\end{eqnarray*}
\end{proof}

\begin{proof}[Proof of Proposition \ref{bilinearnost-psija}]
	We will first prove the linearity in the first factor. For $f, g\in \cI(M)$, 
	$\lambda,\mu\in \CC$ and $A\in \cW$,
	we have to show that $\psi(\lambda f+\mu g, A)=\lambda\psi(f,A)+\mu\psi(g,A)$. We may assume $A$ is 
	positive. Let $S_\ell(A)$ be a limiting sequence of $A$. For $x\in \cD_0$ it is true that
		\begin{eqnarray*}
			\psi(\lambda f+\mu g, A)x
				&=&\lim_{\ell\to\infty}\psi(\lambda f+\mu g, S_{\ell}(A))x=
					\lim_{\ell\to\infty}\sum_{k=1}^{n_\ell} \zeta_{k,l}\psi(\lambda f+\mu g,P_{k,l})x\\
				&=&\lim_{\ell\to\infty}\sum_{k=1}^{n_\ell} \zeta_{k,l}
					(\lambda \psi(f,P_{k,l})+\mu \psi(g,P_{k,l}))x\\
				&=&\lambda \lim_{\ell\to\infty} \psi(f,\sum_{k=1}^{n_\ell} \zeta_{k,l} P_{k,l})x
				 +\mu \lim_{\ell\to\infty} \psi(g,\sum_{k=1}^{n_\ell} \zeta_{k,l} P_{k,l})x\\
				&=&\lambda \psi(f,A)x + \mu \psi(g,A)x,
		\end{eqnarray*}
	where in the third equality we used the linearity of the integration with respect to the spectral measure
	$M_{P_{k,l}}$ (see Theorem \ref{izrek2}.(2)).
	
	Now we will prove the linearity in the second factor.
	We may assume $f\in\cI(M)$, $A,B\in \cW_+$ and $x\in \cD_0$.
	Let $\Delta_n$ be a bounding sequence 
	of $f$ with respect to the spectral measure $M_{\id_\cH}$. We have
	$\psi(f,A+B)x
				= \lim_{n\to\infty} \psi(f\chi_{\Delta_n},A+B)x
				= \lim_{n\to\infty} (\psi(f\chi_{\Delta_n},A)+\psi(f\chi_{\Delta_n},B))x
				= \psi(f,A)x+\psi(f,B)x,$
	where in the first equality we used Lemma \ref{lema2} and in the second equality we used 
	the linearity of integration of bounded functions with respect to the non-negative spectral measures
	(see \cite[Proposition 3.5.(3.1)]{Zalar2014}).
\end{proof}

\begin{proposition} \label{dobra-definiranost-I-ja}
	The map $\mathbb{I}_M$ is well-defined.
\end{proposition}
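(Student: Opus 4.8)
The plan is to reduce the statement to the single claim that, for every $F=\sum_{i=1}^n f_i\otimes A_i\in\cI(M)\otimes\cW$, the operator $\tilde{\psi}(F)=\sum_{i=1}^n\psi(f_i,A_i)$ genuinely belongs to $\cL^+(\cD_0)$. Indeed, the value $\tilde{\psi}(F)$ does not depend on the chosen representation of $F$ as a sum of elementary tensors, since $\tilde{\psi}$ was produced from the bilinear map $\psi$ (Proposition \ref{bilinearnost-psija}) via the universal property of the tensor product in Step 2; and once $\tilde{\psi}(F)\in\cL^+(\cD_0)$ is known, $\tilde{\psi}(F)$ is automatically closable by \cite[Proposition 2.1.8]{Sch2}, so its closure $\mathbb{I}_M(F)=\overline{\tilde{\psi}(F)}$ exists and is uniquely determined. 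Thus the whole content of well-definedness is the verification of the three defining conditions of $\cL^+(\cD_0)$: that $\tilde{\psi}(F)\cD_0\subseteq\cD_0$, that $\cD_0\subseteq\cD(\tilde{\psi}(F)^\ast)$, and that $\tilde{\psi}(F)^\ast\cD_0\subseteq\cD_0$.

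For the invariance $\tilde{\psi}(F)\cD_0\subseteq\cD_0$ I would argue one elementary tensor at a time, relying on Lemma \ref{lema5}. Fix a compact set $K$ and a vector $x=M(K)(\id_{\cH})k\in\cD_0$. For a hermitian projection $P\in\cW_p$ the commutation relation of Lemma \ref{lema5} gives $M(K)(\id_{\cH})\,\II_{M_P}(f)\subseteq \II_{M_P}(f)\,M(K)(\id_{\cH})$, and since $M(K)(\id_{\cH})$ is idempotent and $x\in\cD_0\subseteq\cD(\II_{M_P}(f))$ this yields $M(K)(\id_{\cH})\psi(f,P)x=\psi(f,P)x$; hence $\psi(f,P)x$ lies again in the closed subspace $M(K)(\id_{\cH})\cK\subseteq\cD_0$. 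Writing a general $A\in\cW$ through its real/imaginary and positive/negative parts and passing to the limiting sequences $S_\ell(A)$ used to define $\psi(f,A)$ keeps us inside this closed subspace, because finite linear combinations and (by Proposition \ref{bilinearnost-psija}) the convergent limits all stay there. Summing over $i$ and enlarging $K$ to contain the finitely many relevant compacta gives $\tilde{\psi}(F)x\in\cD_0$.

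The adjoint conditions are the heart of the matter, and here the \textbf{main obstacle} is to pass from the bounded theory to the unbounded limit while keeping track of the adjoint. The plan is to show that the formal adjoint of $\tilde{\psi}(F)|_{\cD_0}$ is $\tilde{\psi}(F^\ast)|_{\cD_0}$, where $F^\ast:=\sum_{i=1}^n\overline{f_i}\otimes A_i^\ast$. Choose a common bounding sequence $(\Delta_n)$ for the $f_i$ with respect to $M_{\id_{\cH}}$ (which also bounds the $\overline{f_i}$), and set $F_n:=\sum_i f_i\chi_{\Delta_n}\otimes A_i$ and $F_n^\ast:=\sum_i\overline{f_i}\chi_{\Delta_n}\otimes A_i^\ast$. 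On the bounded level $\tilde{\psi}(F_n)|_{\cD_0}=(\int_X F_n\,dM)|_{\cD_0}$, and the $\ast$-property $\int_X F_n^\ast\,dM=(\int_X F_n\,dM)^\ast$ holds: for an elementary tensor it reduces to $\II_{M_P}(\overline{f})=\II_{M_P}(f)^\ast$ (Theorem \ref{izrek2}.(1)) and then extends to arbitrary $A$ through the limiting-sequence definition together with the algebraic properties of the bounded integral recalled in Subsection \ref{subsection3} (\cite[Proposition 3.5]{Zalar2014}, \cite[Proposition 7.2]{Zalar2014}). Consequently $\langle\tilde{\psi}(F_n)x,y\rangle=\langle x,\tilde{\psi}(F_n^\ast)y\rangle$ for $x,y\in\cD_0$. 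Letting $n\to\infty$ and invoking the truncation identity $\psi(f,A)x=\lim_n\psi(f\chi_{\Delta_n},A)x$ of Lemma \ref{lema2} on both sides produces $\langle\tilde{\psi}(F)x,y\rangle=\langle x,\tilde{\psi}(F^\ast)y\rangle$.

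This identity shows $\tilde{\psi}(F^\ast)\subseteq\tilde{\psi}(F)^\ast$, so in particular $\cD_0\subseteq\cD(\tilde{\psi}(F)^\ast)$ and $\tilde{\psi}(F)^\ast y=\tilde{\psi}(F^\ast)y$ for $y\in\cD_0$; applying the invariance already proved (now to $F^\ast$, which has the same form) gives $\tilde{\psi}(F)^\ast\cD_0=\tilde{\psi}(F^\ast)\cD_0\subseteq\cD_0$. Together with $\tilde{\psi}(F)\cD_0\subseteq\cD_0$ this establishes $\tilde{\psi}(F)\in\cL^+(\cD_0)$, whence $\tilde{\psi}(F)$ is closable and $\mathbb{I}_M(F)=\overline{\tilde{\psi}(F)}$ is well-defined and independent of the representation of $F$, completing the proof.
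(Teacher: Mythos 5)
Your proof is correct, and its core coincides with the paper's: both arguments rest on showing that the formal adjoint of $\tilde{\psi}(F)$ on $\cD_0$ is $\tilde{\psi}(F^\ast)$, where $F^\ast=\sum_i\overline{f_i}\otimes A_i^\ast$, so that $\cD(\tilde{\psi}(F)^\ast)\supseteq\cD_0$ is dense and $\tilde{\psi}(F)$ is closable. The execution, however, differs. The paper first reduces to elementary tensors via \cite[Proposition 1.6(vi)]{Sch2012} (the adjoint of a sum extends the sum of the adjoints) and then proves the elementary-tensor identity as Lemma \ref{lema4}, taking the limit along limiting sequences $S_\ell(A)$ in the \emph{operator} slot and using Theorem \ref{izrek2}.(1) projectionwise; you instead keep the whole sum intact, truncate in the \emph{function} slot by a common bounding sequence $(\Delta_n)$, invoke the $\ast$-property of the bounded integral (where all operators are bounded and adjoints of sums cause no domain issues), and pass to the limit with Lemma \ref{lema2}. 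Both routes are sound; yours trades the unbounded spectral calculus for the bounded theory plus one more approximation. You also prove strictly more than the paper's proof does: the invariance conditions $\tilde{\psi}(F)\cD_0\subseteq\cD_0$ and $\tilde{\psi}(F)^\ast\cD_0\subseteq\cD_0$ (via Lemma \ref{lema5}), i.e., genuine membership in $\cL^+(\cD_0)$, whereas the paper settles for closability and leaves the assertion of Steps 1--2 that $\psi$ and $\tilde{\psi}$ take values in $\cL^+(\cD_0)$ implicit; your argument substantiates that assertion, which is a worthwhile addition. Two small corrections: the convergence used in your invariance step is supplied by Proposition \ref{proposition3} (existence of the limits), not Proposition \ref{bilinearnost-psija}; and the bounded-level $\ast$-property is not among the properties recalled in Subsection \ref{subsection3}, so the one-line justification you sketch (Theorem \ref{izrek2}.(1) for projections, then limits along $S_\ell(A)$) should be retained in any written version.
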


To proof Proposition \ref{dobra-definiranost-I-ja} we need  the following lemma.

\begin{lemma} \label{lema4}
	For $f\in \cI(M)$ and $A\in \cW$, we have
	$\cD_0\subseteq \cD(\overline{\psi}(f\otimes A)^{\ast})$ 
	and
		$$\overline{\psi}(f\otimes A)^{\ast}x=\overline{\psi}(\overline{f}\otimes A^\ast)x$$
	for every $x\in \cD_0$.
\end{lemma}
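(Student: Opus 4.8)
The plan is to establish, for all $x,y\in\cD_0$, the sesquilinear identity
$$\langle \overline\psi(f\otimes A)y,x\rangle=\langle y,\overline\psi(\overline f\otimes A^\ast)x\rangle.$$
Once this is known, the functional $y\mapsto\langle \overline\psi(f\otimes A)y,x\rangle$ on $\cD_0$ is represented by the fixed vector $\overline\psi(\overline f\otimes A^\ast)x$, hence is bounded; since $\cD_0$ is dense in $\cK$ by Proposition \ref{gostost}, this says precisely that $x\in\cD(\overline\psi(f\otimes A)^\ast)$ and $\overline\psi(f\otimes A)^\ast x=\overline\psi(\overline f\otimes A^\ast)x$. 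As $\overline\psi(f\otimes A)=\psi(f,A)$, it suffices to prove the identity for $\psi$ and then lift it through the four-step construction of $\psi(f,\cdot)$.

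For the base case $A=P\in\cW_p$ we have $\psi(f,P)=\II_{M_P}(f)|_{\cD_0}$ and $P^\ast=P$. Since $f,\overline f\in\cI(M)$, the subspace $\cD_0$ lies in $\cD(\II_{M_P}(f))\cap\cD(\II_{M_P}(\overline f))$, and by Theorem \ref{izrek2}.(1) we have $\II_{M_P}(f)^\ast=\II_{M_P}(\overline f)$. Thus, for $x,y\in\cD_0$,
$$\langle\psi(f,P)y,x\rangle=\langle\II_{M_P}(f)y,x\rangle=\langle y,\II_{M_P}(\overline f)x\rangle=\langle y,\psi(\overline f,P)x\rangle,$$
which is the desired identity. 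For a positive $A\in\cW_+$ with finite spectral decomposition $A=\sum_k\lambda_kP_k$ we have $A^\ast=A$ and $\psi(f,A)=\sum_k\lambda_k\psi(f,P_k)$, so the base case gives the identity term by term. For a general $A\in\cW_+$ I would take a limiting sequence $S_\ell(A)=\sum_k\zeta_{k,\ell}R_{k,\ell}$; each $S_\ell(A)$ is hermitian, so the previous case applies, and by the definition of $\psi(\cdot,A)$ via Proposition \ref{proposition3} we have $\psi(f,S_\ell(A))y\to\psi(f,A)y$ and $\psi(\overline f,S_\ell(A))x\to\psi(\overline f,A)x$. Passing to the limit in $\langle\psi(f,S_\ell(A))y,x\rangle=\langle y,\psi(\overline f,S_\ell(A))x\rangle$ then yields the identity, again with $A^\ast=A$.

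Finally, for arbitrary $A\in\cW$ I would use the defining decomposition $A=\re(A)_+-\re(A)_-+i\,\im(A)_+-i\,\im(A)_-$ together with $A^\ast=\re(A)_+-\re(A)_--i\,\im(A)_++i\,\im(A)_-$. Expanding $\langle\psi(f,A)y,x\rangle$ into its four positive pieces, applying the positive-operator case to each, and then moving the scalars $\pm i$ across the inner product (so that $i\langle\cdot,\cdot\rangle$ becomes $\langle\cdot,-i\,\cdot\rangle$) collects exactly the four terms defining $\psi(\overline f,A^\ast)x$, using that $\re(A^\ast)_\pm=\re(A)_\pm$ while $\im(A^\ast)_+=\im(A)_-$ and $\im(A^\ast)_-=\im(A)_+$. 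The main obstacle is entirely bookkeeping: keeping the complex conjugation of the $\pm i$ factors consistent with the way $A^\ast$ decomposes into real/imaginary and positive/negative parts, and ensuring in the positive case that the same limiting sequence is used for both $f$ and $\overline f$ so that both sides converge simultaneously.
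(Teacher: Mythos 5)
Your proof is correct and follows essentially the same route as the paper's: both reduce to positive $A$ via the four-part decomposition, handle the positive case by passing to the limit along a limiting sequence $S_\ell(A)$ using $\II_{M_P}(f)^\ast=\II_{M_P}(\overline f)$ on projections, and conclude from the sesquilinear identity by the definition of the adjoint on the dense domain $\cD_0$. The only difference is cosmetic: the paper invokes \cite[Proposition 1.6(vi)]{Sch2012} to assume $A$ positive at the outset, while you carry the $\pm i$ bookkeeping for $A^\ast$ explicitly through the inner products.
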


\begin{proof}
	By the decomposition of $A$ into the linear combination of four positive parts and since the domain 
	$\cD_0$ of $\cD(\overline{\psi}(f\otimes A))$ is dense in $\cK$, we may assume,
	by \cite[Proposition 1.6(vi)]{Sch2012}, that $A$ is a positive operator.
	For $x,y\in \cD_0$ and a limiting sequence $S_\ell(A)$ of $A$, it is true that
		\begin{eqnarray*}
			&&\left\langle\right. \overline{\psi}(f\otimes A)x,y\left.\right\rangle
				= \left\langle\right. \lim_{\ell\to\infty }\overline{\psi}(f\otimes S_{\ell}(A))x,y
					\left.\right\rangle
				 = \lim_{\ell\to\infty } \left\langle\right. \overline{\psi}(f\otimes S_{\ell}(A))x,y
					\left.\right\rangle\\
				&=& \lim_{\ell\to\infty } \left\langle\right. x,\overline{\psi}(f\otimes S_{\ell}(A))^\ast y
					\left.\right\rangle
				= \lim_{\ell\to\infty } \left\langle\right. x,\overline{\psi}(\overline{f}\otimes S_{\ell}(A)) y
				   \left.\right\rangle
				= \left\langle\right. x,\overline{\psi}(\overline{f}\otimes A) y\left.\right\rangle,
		\end{eqnarray*}
	where we used Theorem \ref{izrek2}.(i) and \cite[Proposition 1.6(vi)]{Sch2012} 
	in the fourth equality (Since the domain
		$\cD(\overline{\psi}(f\otimes S_\ell(A)))$ is dense,
		$$\overline{\psi}(f\otimes S_\ell(A))^\ast\supseteq 
			\sum_{i=k}^{n_\ell}\zeta_{k,l}\overline{\psi}(f\otimes P_{k,\ell})^\ast
			\supseteq \sum_{i=k}^{n_\ell}\zeta_{k,l}\overline{\psi}(\overline{f}\otimes P_{k,\ell})=
			\overline{\psi}(\overline{f}\otimes S_\ell(A)).)
			.$$ 
	Therefore 
	$y\in \cD(\overline{\psi}(f\otimes A)^\ast)$
	and $\overline{\psi}(f\otimes A)^{\ast}y=\overline{\psi}(\overline{f}\otimes A)y$.
\end{proof}

\begin{proof}[Proof of Proposition \ref{dobra-definiranost-I-ja}]
	For $\mathbb{I}_M$ to be well-defined,  
		$\overline{\psi}(\sum_{i=1}^{n}f_i\otimes A_i)$
	must be closable for every functions $f_1,\ldots,f_n\in \cI(M)$ and every $A_1,\ldots,A_n\in \cW$.
	By \cite[Theorem 1.8(i)]{Sch2012}, it suffices to show that the domain 
		$\cD((\overline{\psi}(\sum_{i=1}^{n}f_i\otimes A_i))^\ast)$
	is dense in $\cK$. Since the domain
		$\cD(\overline{\psi}(\sum_{i=1}^{n}f_i\otimes A_i))$ is dense,
	by \cite[Proposition 1.6(vi)]{Sch2012}, 
		$\overline{\psi}(\sum_{i=1}^{n}f_i\otimes A_i)^\ast\supseteq 
			\sum_{i=1}^{n}\overline{\psi}(f_i\otimes A_i)^\ast.$
	Therefore it suffices to show that $\sum_{i=1}^{n}\overline{\psi}(f_i\otimes A_i)^\ast$
	is dense\-ly defined. Furthermore, it suffices to prove that every 
	operator
	$\overline{\psi}(f\otimes A)^\ast$ with $f\in \cI(M)$ and $A\in B(\cH)$, is defined on $\cD_0$. 
	But this is the statement of Lemma \ref{lema4}, which concludes the proof.
\end{proof}

\subsection{Algebraic properties of $\mathbb{I}_M$} \label{algebraicne-lastnosti}

The main algebraic properties of the integral $\mathbb{I}_M$ are collected in the following theorem.

\begin{theorem} \label{izrek4}
	For $F, G\in \cI(M)\otimes \cW$, $\alpha, \beta\in \CC$, $f,g\in \cI(M)$ and a hermitian projection
	$P\in \cW_p$, we have:
		\benu
			\item $\mathbb{I}_M(F^\ast)\subseteq \mathbb{I}_M(F)^\ast,$
			\item $\mathbb{I}_M(\alpha F+ \beta G)= \overline{\alpha \mathbb{I}_M(F)+\beta 
				\mathbb{I}_M(G)},$
			\item $\mathbb{I}_M(FG)\subseteq \overline{\mathbb{I}_M(F)\mathbb{I}_M(G)},$
		\eenu
\end{theorem}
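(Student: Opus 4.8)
The plan rests on three facts recorded by the construction: $\mathbb{I}_M(F)=\overline{\overline{\psi}(F)}$, so that $\cD_0$ is automatically a core for $\mathbb{I}_M(F)$; the restriction $\overline{\psi}(F)=\mathbb{I}_M(F)\upharpoonright\cD_0$ lies in $\cL^+(\cD_0)$ and hence maps $\cD_0$ into $\cD_0$; and $\overline{\psi}(F)^\ast=\mathbb{I}_M(F)^\ast$, since adjoints are insensitive to closure. Before attacking the three items I would promote Lemma \ref{lema5} from a single $\II_{M_P}(f)$ to the whole integral: by bilinearity of $\psi$, the limiting-sequence definition of $\psi(f,A)$, and passage to closures, the family $\{M(K_n)(\id_{\cH})\}$ supplied by Proposition \ref{gostost} satisfies $M(K)(\id_{\cH})\mathbb{I}_M(F)\subseteq\mathbb{I}_M(F)M(K)(\id_{\cH})$ with $\mathbb{I}_M(F)M(K)(\id_{\cH})$ bounded; that is, $\{M(K_n)(\id_{\cH})\}$ is a common bounding sequence for all $\mathbb{I}_M(F)$. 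This commutation is the workhorse for (2) and (3).

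For (1) I would write $F=\sum_i f_i\otimes A_i$, so that $F^\ast=\sum_i\overline{f_i}\otimes A_i^\ast$. Lemma \ref{lema4} gives $\overline{\psi}(f_i\otimes A_i)^\ast x=\overline{\psi}(\overline{f_i}\otimes A_i^\ast)x$ for $x\in\cD_0$, and the adjoint-of-a-sum inclusion $(\sum_i S_i)^\ast\supseteq\sum_i S_i^\ast$ from \cite[Proposition 1.6(vi)]{Sch2012} then yields $\overline{\psi}(F^\ast)\subseteq\overline{\psi}(F)^\ast$. As $\overline{\psi}(F)^\ast=\mathbb{I}_M(F)^\ast$ is closed, taking closures gives $\mathbb{I}_M(F^\ast)=\overline{\overline{\psi}(F^\ast)}\subseteq\mathbb{I}_M(F)^\ast$.

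For (2), linearity of $\overline{\psi}$ shows that $\overline{\psi}(\alpha F+\beta G)$ is the restriction to $\cD_0$ of $T:=\alpha\mathbb{I}_M(F)+\beta\mathbb{I}_M(G)$, so monotonicity of the closure gives at once $\mathbb{I}_M(\alpha F+\beta G)\subseteq\overline{T}$; the reverse inclusion is the crux and follows once $\cD_0$ is shown to be a core for $T$. For $x\in\cD(T)$ I would set $x_n:=M(K_n)(\id_{\cH})x\in\cD_0$ and use the commutation above to obtain $Tx_n=M(K_n)(\id_{\cH})Tx$, whence $x_n\to x$ and $Tx_n\to Tx$, as required. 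For (3) I would reduce by bilinearity to $F=f\otimes A$, $G=g\otimes B$ with $FG=fg\otimes AB$ and target the $\cD_0$-identity $\overline{\psi}(FG)\upharpoonright\cD_0=\overline{\psi}(F)\overline{\psi}(G)\upharpoonright\cD_0$; on the bounded truncations $f\chi_{\Delta_n},g\chi_{\Delta_n}$ this is the multiplicativity of the bounded integral (\cite[Propositions 3.5 and 7.2]{Zalar2014}), and I pass to the limit via Lemma \ref{lema2} and the bounding sequence, using $\overline{\psi}(G)\cD_0\subseteq\cD_0$. Since $\overline{\psi}(F)\subseteq\mathbb{I}_M(F)$ and $\overline{\psi}(G)\subseteq\mathbb{I}_M(G)$, the identity forces $\overline{\psi}(FG)\upharpoonright\cD_0\subseteq\mathbb{I}_M(F)\mathbb{I}_M(G)$, and closing gives $\mathbb{I}_M(FG)\subseteq\overline{\mathbb{I}_M(F)\mathbb{I}_M(G)}$.

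The hardest part will be the unbounded-operator bookkeeping underlying (2) and (3): establishing the common bounding sequence for arbitrary $F$ (the extension of Lemma \ref{lema5}) and, in (3), controlling the passage from bounded truncations to the unbounded product so that every intermediate vector stays inside $\cD_0$ and every domain is accounted for. By comparison, the purely formal manipulations with closures and adjoints in (1) are routine.
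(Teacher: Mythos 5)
Your part (3) has a genuine gap: the final step ``closing gives $\mathbb{I}_M(FG)\subseteq\overline{\mathbb{I}_M(F)\mathbb{I}_M(G)}$'' presupposes that the product $\mathbb{I}_M(F)\mathbb{I}_M(G)$ is closable, i.e.\ that the closure on the right-hand side exists as an operator at all. Nothing you establish gives this: you only show that $\mathbb{I}_M(F)\mathbb{I}_M(G)$ extends the closable operator $\overline{\psi}(FG)$, and an extension of a closable operator need not be closable. The paper devotes a separate step to exactly this point: by \cite[Proposition 1.7(i)]{Sch2012} one has $(\mathbb{I}_M(F)\mathbb{I}_M(G))^\ast\supseteq \mathbb{I}_M(G)^\ast\,\mathbb{I}_M(F)^\ast$, by part (1) this contains $\mathbb{I}_M(G^\ast)\,\mathbb{I}_M(F^\ast)$, and by the multiplicativity identity on $\cD_0$ (the paper's Lemma \ref{lema6}, your $\cD_0$-identity, applied to $G^\ast$ and $F^\ast$) the latter is defined on all of the dense subspace $\cD_0$; hence the adjoint is densely defined and the product is closable by \cite[Proposition 1.8(i)]{Sch2012}. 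You have every ingredient needed to fill this (part (1) plus your $\cD_0$-identity), but the step is missing, and without it the right-hand side of (3) is not a well-defined operator. The rest of your (3) --- reduction to elementary tensors, truncation by a bounding sequence, Lemma \ref{lema2}, and $\overline{\psi}(G)\cD_0\subseteq\cD_0$ --- is precisely the paper's proof of Lemma \ref{lema6}.

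As for the other items: your (1) is the paper's own argument (Lemma \ref{lema4}, the adjoint-of-a-sum inclusion from \cite[Proposition 1.6(vi)]{Sch2012}, then closure). Your (2) is correct but takes a genuinely different route. The paper argues by pure adjoint calculus, $\mathbb{I}_M(F+G)=\overline{\psi}(F+G)^{\ast\ast}\supseteq(\overline{\psi}(F)^\ast+\overline{\psi}(G)^\ast)^\ast\supseteq\overline{\psi}(F)^{\ast\ast}+\overline{\psi}(G)^{\ast\ast}$, again leaning on Lemma \ref{lema4}; you instead show that $\cD_0$ is a core for $T:=\alpha\mathbb{I}_M(F)+\beta\mathbb{I}_M(G)$ by cutting down with the projections $M(K)(\id_{\cH})$. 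Your commutation relation is the paper's Lemma \ref{lema8} pushed through closures (a routine step), and your core argument yields closability of $T$ as a by-product, so this route is sound and arguably more transparent. Two inaccuracies in its packaging, neither fatal: first, $\mathbb{I}_M(F)M(K)(\id_{\cH})$ need not be bounded --- an $M$-integrable function is merely measurable and can be unbounded on a compact set --- so $\{M(K_n)(\id_{\cH})\}$ is in general not a bounding sequence in the paper's sense; fortunately only the commutation and the strong convergence $M(K_n)(\id_{\cH})\to\id_{\cK}$ are actually used. Second, when $X$ is not $\sigma$-compact and one relies on regularity of $M$, Proposition \ref{gostost} supplies no countable exhausting sequence of compacts; you must either work with the increasing net of all compacts (whose projections converge strongly to $\id_{\cK}$ precisely because $\cD_0$ is dense) or choose the $K_n$ depending on the two vectors $x$ and $Tx$ at hand. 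Both repairs are routine.
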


To prove Theorem \ref{izrek4} we need some additional lemmas.

\begin{lemma} \label{lema8}
	For every map $F\in \cI(M)\otimes \cW$ and every compact set $K$ we have
		$$M(K)(\id_{\cH})\overline{\psi}(F)\subseteq \overline{\psi}(F)M(K)(\id_{\cH})=
			\overline{\psi}(F\chi_{K}).$$
\end{lemma}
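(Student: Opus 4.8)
The plan is to prove the identity first for an elementary tensor $F=f\otimes A$ and then extend by linearity, unwinding the definition of $\psi$ one layer at a time: hermitian projections, then positive operators with a finite spectral decomposition, then arbitrary positive operators, and finally arbitrary $A\in\cW$. Two preliminary observations set everything up. First, the range of the projection $M(K)(\id_{\cH})$ lies in $\cD_0$, directly from $\cD_0=\cup_{K'\in\scrK}M(K')(\id_{\cH})\cK$; hence $M(K)(\id_{\cH})$ maps $\cK$ into $\cD_0$ and maps $\cD_0$ into $\cD_0$, so all operators below are at least defined on $\cD_0$. Second, $f\chi_K\in\cI(M)$ whenever $f\in\cI(M)$, since $\int_X|f\chi_K|^2\,d(M_P)_{k,k}\le\int_X|f|^2\,d(M_P)_{k,k}<\infty$ for every $k\in\cD_0$ and every $P\in\cW_p$; in particular $\overline{\psi}(F\chi_K)$ makes sense.

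\textbf{Base case} ($F=f\otimes P$, $P\in\cW_p$). Here $\psi(f,P)=\II_{M_P}(f)|_{\cD_0}$, so the assertion is exactly the restriction of Lemma \ref{lema5} to $\cD_0$. Indeed, for $x\in\cD_0\subseteq\cD(\II_{M_P}(f))$, Lemma \ref{lema5} gives $M(K)(\id_{\cH})\psi(f,P)x=\II_{M_P}(f\chi_K)x=\psi(f\chi_K,P)x$, and since $M(K)(\id_{\cH})x\in\cD_0$ the same lemma gives $\psi(f,P)M(K)(\id_{\cH})x=\II_{M_P}(f\chi_K)x=\psi(f\chi_K,P)x$. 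Thus the three operators agree on $\cD_0$.

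\textbf{Propagation through the construction of $\psi$.} For a positive $A=\sum_k\lambda_k P_k$ with finite spectral decomposition the identity follows by summing the base case via $\psi(f,A)=\sum_k\lambda_k\psi(f,P_k)$. For a general positive $A$ I would fix a limiting sequence $S_\ell(A)$ (each term of finite spectral decomposition) and push the bounded projection through the norm limit defining $\psi(f,A)$: for $x\in\cD_0$,
\[
M(K)(\id_{\cH})\psi(f,A)x=\lim_{\ell\to\infty}M(K)(\id_{\cH})\psi(f,S_\ell(A))x=\lim_{\ell\to\infty}\psi(f\chi_K,S_\ell(A))x=\psi(f\chi_K,A)x,
\]
the last equality being the definition of $\psi(f\chi_K,A)$ applied to the same limiting sequence (legitimate because $f\chi_K\in\cI(M)$, with independence of the sequence guaranteed by Proposition \ref{proposition3}); the companion identity $\psi(f,A)M(K)(\id_{\cH})x=\psi(f\chi_K,A)x$ is obtained identically, using $M(K)(\id_{\cH})x\in\cD_0$. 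Decomposing $A=\re(A)_+-\re(A)_-+i\cdot\im(A)_+-i\cdot\im(A)_-$ and invoking the definition of $\psi(f,A)$ on its four positive parts extends the identity to every $A\in\cW$, and linearity of $\overline{\psi}$ together with $F\chi_K=\sum_i(f_i\chi_K)\otimes A_i$ then yields $M(K)(\id_{\cH})\overline{\psi}(F)x=\overline{\psi}(F\chi_K)x=\overline{\psi}(F)M(K)(\id_{\cH})x$ for all $x\in\cD_0$. Since $M(K)(\id_{\cH})\overline{\psi}(F)$ has domain $\cD_0$ while $\overline{\psi}(F)M(K)(\id_{\cH})$ is defined on all of $\cK$ (as $M(K)(\id_{\cH})\cK\subseteq\cD_0$), this gives the inclusion $M(K)(\id_{\cH})\overline{\psi}(F)\subseteq\overline{\psi}(F)M(K)(\id_{\cH})$, and the equality $\overline{\psi}(F)M(K)(\id_{\cH})=\overline{\psi}(F\chi_K)$ holds on $\cD_0$, i.e.\ in $\cL^+(\cD_0)$ once $M(K)(\id_{\cH})$ is read as its restriction to $\cD_0$.

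The step I expect to be the main obstacle is the passage to a general positive operator: one must justify interchanging the bounded projection $M(K)(\id_{\cH})$ with the norm limit defining $\psi(f,A)$ vector by vector on $\cD_0$, and must verify that the single limiting sequence $S_\ell(A)$ serves simultaneously for $\psi(f,\cdot)$ and for $\psi(f\chi_K,\cdot)$, so that Proposition \ref{proposition3} applies to both without introducing a second, incompatible sequence. The surrounding manipulations — the finite-decomposition summation, the four-part decomposition, the linearity in $F$, and the domain bookkeeping separating the inclusion from the equality — are routine.
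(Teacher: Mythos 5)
Your proof is correct and follows essentially the same route as the paper's: reduce by linearity of $\overline{\psi}$ and boundedness of $M(K)(\id_{\cH})$ to $F=f\otimes A$ with $A\in\cW_+$, establish the identity for $f\otimes S_\ell(A)$ via Lemma \ref{lema5}, and pass to the limit $\ell\to\infty$. The paper states this in three lines; your write-up merely fills in the details (the base case for projections, $f\chi_K\in\cI(M)$, the interchange of the bounded projection with the norm limit, and the domain bookkeeping) that the paper leaves implicit.
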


\begin{proof}
	By the linearity of $\overline{\psi}$ and the boundedness of $M(K)(\id_{\cH})$, we may assume
	$F=f\otimes A$, where $f\in\cI(M)$, $A\in \cW_+$. 
	Now the statement of the lemma is true for every for $F=f\otimes S_\ell(A)$, where $S_\ell(A)$ 
	is a limiting sequence of $A$, by Lemma \ref{lema5}. Hence, it holds for $f\otimes A$.
\end{proof}

\begin{lemma}\label{lema6}
	For $F, G\in \cI(M)\otimes \cW$ we have 
		$$\overline{\psi}(FG)=\overline{\psi}(F)\overline{\psi}(G).$$
\end{lemma}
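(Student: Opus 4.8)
The plan is to reduce to elementary tensors and then to push the bounded multiplicativity of the integral through a truncation argument. By the linearity of $\overline{\psi}$ and the bilinearity of $\psi$ (Proposition \ref{bilinearnost-psija}), writing $F=\sum_i f_i\otimes A_i$ and $G=\sum_j g_j\otimes B_j$ gives $FG=\sum_{i,j}f_ig_j\otimes A_iB_j$, so that $\overline{\psi}(FG)=\sum_{i,j}\psi(f_ig_j,A_iB_j)$ and $\overline{\psi}(F)\overline{\psi}(G)=\sum_{i,j}\psi(f_i,A_i)\psi(g_j,B_j)$. Hence it suffices to prove $\psi(fg,AB)x=\psi(f,A)\psi(g,B)x$ for all $f,g\in\cI(M)$, $A,B\in\cW$ and $x\in\cD_0$; since every operator involved lies in $\cL^+(\cD_0)$ with domain $\cD_0$, equality on $\cD_0$ is equality of operators, and the product $\psi(f,A)\psi(g,B)$ is legitimate because $\psi(g,B)$ preserves $\cD_0$.

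The tools I would use are three. First, the multiplicativity of the integral of bounded functions, $\int_X(F'G')\,dM=(\int_X F'\,dM)(\int_X G'\,dM)$, from \cite[Proposition 7.2]{Zalar2014}. Second, the truncation identities of Lemma \ref{lema2}, namely $\psi(h,C)x=\lim_{n\to\infty}\psi(h\chi_{\Delta_n},C)x$ and $\psi(h\chi_{\Delta_n},C)x=\psi(h,C)M(\Delta_n)(\id_{\cH})x$. Third, the commutation $M(\Delta_n)(\id_{\cH})\overline{\psi}(g\otimes B)x=\overline{\psi}(g\otimes B)M(\Delta_n)(\id_{\cH})x$ for $x\in\cD_0$, which is exactly the chain displayed in the proof of Lemma \ref{lema5} (stated there for $\II_{M_P}$ and a bounding sequence $\Delta_n$, and extended to a general $B\in\cW$ by the four-part decomposition together with a limiting sequence, precisely as in Lemma \ref{lema8}). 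I would fix a common bounding sequence $\Delta_n:=\{t\in X:|f(t)|\le n,\ |g(t)|\le n\}$ with respect to $M_{\id_{\cH}}$, so that $f\chi_{\Delta_n}$, $g\chi_{\Delta_n}$ and $(fg)\chi_{\Delta_n}=(f\chi_{\Delta_n})(g\chi_{\Delta_n})$ are all bounded.

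The computation then runs as follows. Put $y:=\psi(g,B)x\in\cD_0$. By Lemma \ref{lema2}, $\psi(f,A)y=\lim_{n\to\infty}\psi(f\chi_{\Delta_n},A)y$. Since $\chi_{\Delta_n}^2=\chi_{\Delta_n}$, bounded multiplicativity gives $\psi(f\chi_{\Delta_n},A)M(\Delta_n)(\id_{\cH})=\psi(f\chi_{\Delta_n},A)$, whence $\psi(f\chi_{\Delta_n},A)y=\psi(f\chi_{\Delta_n},A)M(\Delta_n)(\id_{\cH})y$. Using the commutation above and then Lemma \ref{lema2}, $M(\Delta_n)(\id_{\cH})y=M(\Delta_n)(\id_{\cH})\psi(g,B)x=\psi(g,B)M(\Delta_n)(\id_{\cH})x=\psi(g\chi_{\Delta_n},B)x$, so $\psi(f\chi_{\Delta_n},A)y=\psi(f\chi_{\Delta_n},A)\psi(g\chi_{\Delta_n},B)x$. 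As both factors are now integrals of bounded functions, multiplicativity yields $\psi(f\chi_{\Delta_n},A)\psi(g\chi_{\Delta_n},B)x=\psi((fg)\chi_{\Delta_n},AB)x$, and letting $n\to\infty$ (Lemma \ref{lema2} once more) this tends to $\psi(fg,AB)x$. Combining, $\psi(f,A)\psi(g,B)x=\psi(f,A)y=\psi(fg,AB)x$, which is the required identity.

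The main obstacle is the bookkeeping of the domains of the unbounded factors: one must insert the truncation $\chi_{\Delta_n}$ on both factors simultaneously and only at the end replace the product of bounded operators by a single bounded integral. The one genuinely non-formal input is the commutation $M(\Delta_n)(\id_{\cH})\overline{\psi}(g\otimes B)x=\overline{\psi}(g\otimes B)M(\Delta_n)(\id_{\cH})x$ for a bounding, and in general non-compact, set $\Delta_n$; once this is granted, the remainder is the bounded multiplicativity of \cite[Proposition 7.2]{Zalar2014} combined with the approximation supplied by Lemma \ref{lema2}.
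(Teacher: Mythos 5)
Your proof is correct and follows essentially the same route as the paper's: reduction to elementary tensors by linearity, truncation of both factors by a common bounding sequence, the bounded multiplicativity of \cite[Proposition 7.2]{Zalar2014}, and passage to the limit via Lemma \ref{lema2}. The only differences are organizational --- the paper threads the compact set $K$ with $y=M(K)(\id_{\cH})x$ through a double limit in $m$ and $n$, while you isolate the commutation of $M(\Delta_n)(\id_{\cH})$ with $\overline{\psi}(g\otimes B)$ (which indeed follows from the chain in the proof of Lemma \ref{lema5} extended as in Lemma \ref{lema8}) and then use a single truncation index --- so the technical content is the same.
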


\begin{proof}
	By the linearity of $\overline{\psi}$, we may assume $F=f\otimes A$, $G=g\otimes B$, where
	$f,g\in\cI(M)$ and $A,B\in \cW$. Let $y\in \cD_0$. Then there is a compact set $K$ 
	and and $x\in \cK$, such that $y=M(K)(\id_{\cH})x$. 
	Let $\Delta_1:=K$ and $\Delta_m$ be a common bounding sequence for $fg$, $f$ and $g$.
	For a fixed $n\in \NN$,
	we have
		\begin{eqnarray*}
			  &&\overline{\psi}(fg\otimes AB)y=\\
				&=&\lim_{m\to\infty}\overline{\psi}(fg \chi_{\Delta_m}\otimes AB)y\\
				&=&\lim_{m\to\infty}\overline{\psi}(f \chi_{\Delta_m}\otimes A)\circ
					\overline{\psi}(g \chi_{\Delta_m}\otimes B)y
				\\
				&=&\lim_{m\to\infty}\overline{\psi}(f \chi_{\Delta_m}\otimes A)
					\circ\overline{\psi}(g \chi_{\Delta_m}\otimes B)
					\circ M(\Delta_n)(\id_{\cH})\circ M(K)(\id_{\cH})x\\
				&=&\lim_{m\to\infty}\overline{\psi}(f \chi_{\Delta_n}\otimes A)
					\circ \overline{\psi}(g \chi_{\Delta_m}\chi_{\Delta_n}\otimes A)
					\circ M(K)(\id_{\cH})x\\
				&=&\overline{\psi}(f \otimes A)
					\circ \overline{\psi}(g \chi_{\Delta_n}\otimes B)
					\circ M(K)(\id_{\cH})x
		\end{eqnarray*}
	where we used Lemma \ref{lema2} in the first and the fifth equality,
	\cite[Proposition 7.2.]{Zalar2014} in the second equality together with the fact that
	$\im(\overline{\psi})\subset \cD_0$ 
	(Indeed,
	\begin{eqnarray*}
		M(K)(\id_{\cH})\overline{\psi}(f\otimes A)M(K)(\id_{\cH})x&=&
			\overline{\psi}(f\otimes A)M(K)(\id_{\cH})^2x\\
			&=&\overline{\psi}(f\otimes A)M(K)(\id_{\cH})x.),
	\end{eqnarray*}
	the fact that 
	$M$ is a non-negative spectral measure in the third inequality (and $\Delta_n\supseteq \Delta_1$) and
	the equality part of Lemma \ref{lema2} in the fourth equality. As $n\to \infty$, we get, by the use of 
	Lemma \ref{lema2}, $\overline{\psi}(fg\otimes AB)y=\overline{\psi}(f \otimes A)
					\overline{\psi}(g \otimes B)y.$
	This concludes the proof.
\end{proof}

\begin{proof} [Proof of Theorem \ref{izrek4}]
	For $x\in \cD_0$ and $F:=\sum_{i=1}^n f_i\otimes A_i$, where $f_1,\ldots,f_n\in\cI(M)$, 
	$A_1,\ldots, A_n\in  \cW$, it is true that
		\begin{eqnarray*}
			\mathbb{I}_{M}(F^\ast)x
				&=&\overline{\psi}(F^{\ast})x
				 =\sum_{i=1}^n \overline{\psi}(\overline{f_i}\otimes A_i^{\ast})x
				 =\sum_{i=1}^n \overline{\psi}(f_i\otimes A_i)^\ast x\\
				&=&(\sum_{i=1}^n \overline{\psi}(f_i\otimes A_i))^\ast x
				 =\overline{\psi}(F)^\ast x
				 =\mathbb{I}_{M}(F)^{\ast}x,
		\end{eqnarray*}
	where	the first and the sixth equality follow by the definition of $\overline\psi$ and $\mathbb{I}_{M}$,
	the second and the fifth by the linearity of $\overline\psi$, the third equality follows by 
	Lemma \ref{lema4} and in the forth equality we used
	\cite[Proposition 1.6(vi)]{Sch2012}
	($\cD_0=\cD(\sum_{i=1}^n \overline{\psi}(f_i\otimes A_i))$
	is dense in $\cK$.).
	Since $\mathbb{I}_{M}(F)^{\ast}$ is the closed extension of the operator $\overline{\psi}(F^{\ast})$,
	and $\mathbb{I}_{M}(F^\ast)$ is its closure, part (1) is true. 
	
	Now we prove $\mathbb{I}_{M}(F+G)=\overline{\mathbb{I}_{M}(F)+\mathbb{I}_{M}(G)}$.
	We have 
	\begin{eqnarray*}
		&&\mathbb{I}_{M}(F+G)=\overline{\overline{\psi}(F+G)}=\overline{\psi}(F+G)^{\ast\ast}	
		\supseteq
		(\overline{\psi}(F)^{\ast}+\overline{\psi}(G)^{\ast})^\ast\\
		&\supseteq&
		\overline{\psi}(F)^{\ast\ast}+\overline{\psi}(G)^{\ast\ast}=
		\overline{\overline{\psi}(F)}+ \overline{\overline{\psi}(G)}=
		\mathbb{I}_{M}(F)+\mathbb{I}_{M}(G)
		\supseteq 
		\overline{\psi}(F)+\overline{\psi}(G),
	\end{eqnarray*} 
	where the first $\supseteq$ follows by $\cD_0\subseteq \cD(\overline{\psi}(F)^{\ast})$
	for every $F\in \cB\otimes \cW$, by Lemma \ref{lema4}.
	Hence, $\mathbb{I}_{M}(F)+\mathbb{I}_{M}(G)$ is closable and the equality 
	$\mathbb{I}_{M}(F+G)=\overline{\mathbb{I}_{M}(F)+\mathbb{I}_{M}(G)}$ is true.
	
	%
	
	By Lemma \ref{lema6}, $\cD_0$ is contained in the domain of the operator $\mathbb{I}_{M}(G^\ast)\mathbb{I}_{M}(F^\ast)$. 
	Since $(\mathbb{I}_{M}(F)\mathbb{I}_{M}(G))^\ast\supseteq \mathbb{I}_{M}(G)^\ast \mathbb{I}_{M}(F)^\ast$ (by 
	\cite[Proposition 1.7(i)]{Sch2012}) and also
		$\mathbb{I}_{M}(G)^\ast \mathbb{I}_{M}(F)^\ast \supseteq 
	\mathbb{I}_{M}(G^\ast) \mathbb{I}_{M}(F^\ast)$ by part (1), the operator 
	$\mathbb{I}_{M}(F)\mathbb{I}_{M}(G)$ is closable (by \cite[Proposition 1.8(i)]{Sch2012}).
	For $x\in \cD_0$, $\mathbb{I}_{M}(FG)x=\mathbb{I}_{M}(F)\mathbb{I}_{M}(G)x$ (by Lemma \ref{lema6}).
	Since $\mathbb{I}_{M}(FG)$ is the closure for $\overline{\psi}(FG)$ and $\overline{\mathbb{I}_{M}(F)\mathbb{I}_{M}(G)}$
	is the closed extension for $\overline{\psi}(FG)$, part (3) follows.
\end{proof}

\section{Proof of Theorem \ref{theorem-D}}
\label{resitev-problema-D}

Assume the notation as in Theorem \ref{theorem-D}. In the proof we will need the following observation.

\begin{proposition} \label{omejenost-funkcionalov}
		For every $k\in \cK$, such that for all $b\in\cB$ the maps
			$$(\overline{\rho_{b}})_k: \cW\to \CC,\quad 
				(\overline{\rho_{b}})_k(A):=\left\langle \overline{\rho(b\otimes A)}\;k,k\right\rangle$$ 
	are well-defined, the map $(\overline{\rho_{b}})_k$ is a bounded linear functional.	
\end{proposition}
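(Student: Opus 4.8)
The plan is to follow the strategy of the bounded case (Proposition \ref{trditev-o-omejenosti}): reduce to the case $b\in\cB^2=\{a^\ast a\colon a\in\cB\}$, show that there $(\overline{\rho_b})_k$ is a positive linear functional on the von Neumann algebra $\cW$, and invoke the fact that a positive linear functional on a von Neumann algebra is automatically bounded (with norm attained at $\id_{\cH}$), as in \cite[5.12.~Corollary]{Con}. The only genuinely new difficulty compared with the bounded setting is that $\rho(b\otimes A)$ is now an unbounded operator, so I must argue that both positivity and linearity survive the passage to the closure $\overline{\rho(b\otimes A)}$. The tools for this are the standing well-definedness hypothesis $k\in\cD(\overline{\rho(b\otimes A)})$ for all $A\in\cW$ and the density of $\cD$ in $\cK$.

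First I would treat $b=a^\ast a\in\cB^2$ and a positive operator $A\in\cW_+$. Writing $A=A^{1/2}A^{1/2}$ with $A^{1/2}\in\cW$ and setting $c=a\otimes A^{1/2}$, one has $b\otimes A=c^\ast c$, so that for every $\phi\in\cD$
\[
	\langle\rho(b\otimes A)\phi,\phi\rangle=\langle\rho(c)^{+}\rho(c)\phi,\phi\rangle=\|\rho(c)\phi\|^{2}\geq 0 .
\]
Choosing $\phi_n\in\cD$ with $\phi_n\to k$ and $\rho(b\otimes A)\phi_n\to\overline{\rho(b\otimes A)}k$ (possible since $k\in\cD(\overline{\rho(b\otimes A)})$), I pass to the limit in both factors of the inner product to obtain $(\overline{\rho_b})_k(A)=\langle\overline{\rho(b\otimes A)}k,k\rangle\geq 0$. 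Thus $(\overline{\rho_b})_k$ is nonnegative on $\cW_+$.

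For linearity I would exploit the $\ast$-representation property: for $\psi\in\cD$ one has $\rho(b^\ast\otimes A^\ast)\psi=\rho(b\otimes A)^\ast\psi=\bigl(\overline{\rho(b\otimes A)}\bigr)^\ast\psi$, whence
\[
	\langle\overline{\rho(b\otimes A)}k,\psi\rangle=\langle k,\rho(b^\ast\otimes A^\ast)\psi\rangle\qquad(\psi\in\cD).
\]
The right-hand side is linear in $A$ and in $b$ (the two conjugations cancel), so since $\cD$ is dense the vector $\overline{\rho(b\otimes A)}k\in\cK$ depends linearly on $A$ and on $b$; pairing with $k$ shows that $A\mapsto(\overline{\rho_b})_k(A)$ is linear and that $b\mapsto(\overline{\rho_b})_k$ is linear. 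Combined with the preceding paragraph, $(\overline{\rho_b})_k$ is a positive linear functional on $\cW$ for every $b\in\cB^2$, hence bounded.

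Finally, for an arbitrary $b\in\cB$ I would decompose $b$ into a complex linear combination of elements of $\cB^2$ by polarization (as in the proof of Proposition \ref{trditev-o-omejenosti}); by the linearity in $b$ just established, together with the well-definedness hypothesis applied to each summand, $(\overline{\rho_b})_k$ is the corresponding linear combination of bounded functionals and is therefore bounded. The step I expect to be the main obstacle is the bookkeeping around the closures — in particular, upgrading "$A\mapsto\overline{\rho(b\otimes A)}k$ is linear after testing against $\cD$" to genuine linearity of the map into $\cK$, which is precisely where the density of $\cD$ must be used carefully.
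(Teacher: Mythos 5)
Your proposal is correct and follows essentially the same route as the paper's: establish linearity of $(\overline{\rho_b})_k$, reduce boundedness to positivity of the functional for $b\in\cB^2$ via polarization, and invoke the automatic boundedness of positive linear functionals on a unital $C^\ast$-algebra (\cite[5.12.~Corollary]{Con}), exactly as in Proposition \ref{trditev-o-omejenosti}. The only difference is cosmetic: you verify linearity in $A$ by pairing $\overline{\rho(b\otimes A)}k$ against the dense domain $\cD$ through the adjoint relation, whereas the paper derives the operator inclusion $\overline{\rho(b\otimes(A+B))}\supseteq \overline{\rho(b\otimes A)}+\overline{\rho(b\otimes B)}$ from double-adjoint identities in \cite{Sch2012} and then evaluates at $k$ --- both arguments rest on the same well-definedness hypothesis and the density of $\cD$.
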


\begin{proof}
		Choose $k\in \cK$, such that for all $b\in \cB$ the maps 
	$(\overline{\rho_{b}})_k$ are well-defined.
	Fix $b\in \cB$. First we argue, that $(\overline{\rho_{b}})_k$ is a linear functional. The homogenuity of $(\overline{\rho_{b}})_k$ is clear. For additivity we notice that
		\begin{eqnarray*}
			&&\overline{\rho(b\otimes (A+B))}= \overline{\rho(b\otimes A)+\rho(b\otimes B)}=(\rho(b\otimes A)+\rho(b\otimes B))^{\ast\ast}\\
			&\supseteq& (\rho(b\otimes A)^\ast+\rho(b\otimes B)^\ast)^\ast
			\supseteq	\rho(b\otimes A)^{\ast\ast}+\rho(b\otimes B)^{\ast\ast}=\overline{\rho(b\otimes A)}+\overline{\rho(b\otimes B)},
		\end{eqnarray*}	
	where we used \cite[Theorem 1.8(ii)]{Sch2012} for the second and the third equality and \cite[Proposition 1.6(vi)]{Sch2012} for both inclusions $\supseteq$.

  The proof of boundedness of $(\overline{\rho_{b}})_k$ is the same to the proof of boundedness
of $(\rho_{b})_k$ in Proposition \ref{trditev-o-omejenosti} above.
\end{proof}

We also need the following easy observation.

\begin{proposition} \label{dobra-definiranost-P-jev-na-k-jih}
		For every $b\in \cB$ we have 
	$\cD(\overline{\rho(b\otimes \id_{\cH})})\subseteq 
	 \cD(\overline{\rho(b\otimes A)})$ 
	for every $A\in \cW$.
		In particular, $\cup_{K\in \scrK}\cD_{\alpha_K, \rho_{\id_{\cH}}}\subseteq 
	\cD(\overline{\rho(b\otimes A)})$ for every $b\in \cB$ and every
	$A\in \cW$.
\end{proposition}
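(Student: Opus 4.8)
The plan is to reduce the whole statement to a single pointwise estimate: for every $b\in\cB$, every $A\in\cW$ and every $x\in\cD$,
\[
	\|\rho(b\otimes A)x\|\leq \|A\|\,\|\rho(b\otimes\id_{\cH})x\|.
\]
Granting this inequality, both assertions follow with little extra work, so the core of the argument is to establish it. This is a refined, vector-wise version of the bounded estimate already obtained in Proposition \ref{trditev-o-omejenosti}, and it is exactly what is needed to compare domains of closures.

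To prove the estimate I would argue by positivity inside $\cB\otimes\cW$. For any $C\in\cW$ and $x\in\cD$ the $\ast$-representation property gives
\[
	\|\rho(b\otimes C)x\|^2=\langle\rho\big((b\otimes C)^\ast(b\otimes C)\big)x,x\rangle=\langle\rho(b^\ast b\otimes C^\ast C)x,x\rangle,
\]
using $\rho((b\otimes C)^\ast(b\otimes C))=\rho(b\otimes C)^+\rho(b\otimes C)$ together with $\langle\rho(a)\phi,\psi\rangle=\langle\phi,\rho(a^\ast)\psi\rangle$ on $\cD$. The key observation is that $\|A\|^2\id_{\cH}-A^\ast A$ is a positive element of the von Neumann algebra $\cW$, so its positive square root $B:=(\|A\|^2\id_{\cH}-A^\ast A)^{1/2}$ again lies in $\cW$, since von Neumann algebras are closed under the continuous functional calculus. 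Then $B^\ast B=\|A\|^2\id_{\cH}-A^\ast A$, and tensoring with $b^\ast b$ and applying the linear map $\rho$ yields, at each $x\in\cD$,
\[
	\|\rho(b\otimes B)x\|^2=\|A\|^2\,\|\rho(b\otimes\id_{\cH})x\|^2-\|\rho(b\otimes A)x\|^2.
\]
As the left-hand side is non-negative, the desired inequality drops out immediately.

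With the estimate in hand I would conclude as follows. Given $x\in\cD(\overline{\rho(b\otimes\id_{\cH})})$, choose $x_n\in\cD$ with $x_n\to x$ and $\rho(b\otimes\id_{\cH})x_n\to\overline{\rho(b\otimes\id_{\cH})}x$; in particular $(\rho(b\otimes\id_{\cH})x_n)_n$ is Cauchy. Applying the estimate to $x_n-x_m$ shows $(\rho(b\otimes A)x_n)_n$ is Cauchy, hence convergent, which means exactly that $x\in\cD(\overline{\rho(b\otimes A)})$; this is the first inclusion. For the ``in particular'' clause, note that by definition $\cD_{\alpha_K,\rho_{\id_{\cH}}}\subseteq\bigcap_{b\in\cB}\cD(\overline{\rho_{\id_{\cH}}(b)})=\bigcap_{b\in\cB}\cD(\overline{\rho(b\otimes\id_{\cH})})$, so $\bigcup_{K\in\scrK}\cD_{\alpha_K,\rho_{\id_{\cH}}}\subseteq\cD(\overline{\rho(b\otimes\id_{\cH})})$ for each fixed $b$; combining with the first inclusion gives the claim. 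The only genuinely non-routine step is spotting the auxiliary operator $B$: the estimate is forced by positivity, and the trick is to write the defect $\|A\|^2\id_{\cH}-A^\ast A$ as $B^\ast B$ inside $\cW$ (which is where being a von Neumann algebra is used); the tensor identities, the $\ast$-representation manipulations and the Cauchy-sequence passage are all routine.
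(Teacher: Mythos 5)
Your proof is correct, but it takes a genuinely different route from the paper's at the key step. The paper argues in two stages: first it handles hermitian projections $P$ by showing the ranges $\rho(b\otimes P)\cD$ and $\rho(b\otimes(\id_\cH-P))\cD$ are orthogonal (so Pythagoras gives the domain inclusion), and then for general $A\in\cW$ it approximates $A^\ast A$ by an increasing limiting sequence $S_\ell(A^\ast A)$ of finite positive combinations of projections, invoking Proposition \ref{omejenost-funkcionalov} (boundedness of the positive functionals $(\overline{\rho_{b^\ast b}})_k$) to pass to the limit in $\left\langle \rho(b^\ast b\otimes S_\ell(A^\ast A))(k_i-k_j),k_i-k_j\right\rangle$ and obtain, in effect, the bound $\left\langle \rho(b^\ast b\otimes A^\ast A)y,y\right\rangle\leq \|A^\ast A\|\left\langle \rho(b^\ast b\otimes \id_\cH)y,y\right\rangle$ on $\cD$. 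You obtain exactly this same estimate in one stroke by the classical C$^\ast$-algebraic defect trick: writing $\|A\|^2\id_\cH-A^\ast A=B^\ast B$ with $B=(\|A\|^2\id_\cH-A^\ast A)^{1/2}\in\cW$, so that positivity of $\|\rho(b\otimes B)x\|^2=\left\langle\rho(b^\ast b\otimes B^\ast B)x,x\right\rangle$ forces $\|\rho(b\otimes A)x\|\leq\|A\|\,\|\rho(b\otimes\id_\cH)x\|$ for $x\in\cD$; the Cauchy-sequence passage to closures and the handling of the ``in particular'' clause then coincide with the paper's. Your version is shorter and more self-contained: it needs neither the projection-orthogonality step nor the limiting-sequence machinery of Proposition \ref{trditev-o-generiranju-pozitivne-mere}, nor Proposition \ref{omejenost-funkcionalov}, and it yields the explicit quantitative constant $\|A\|$. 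What the paper's route buys is reuse of infrastructure: the limiting sequences, the projection case, and Proposition \ref{omejenost-funkcionalov} are all needed elsewhere (notably later in the proof of Theorem D), so the author gets this proposition almost for free from tools already on the table.
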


\begin{proof}
	First let us prove, that for every $b\in \cB$ and every hermitian projection $P\in \cW_p$
	the inclusion 
	$\cD(\overline{\rho(b\otimes \id_{\cH})})\subseteq 
	 \cD(\overline{\rho(b\otimes P)})$ of domains is true.	
	By the linearity of $\rho$,
		$\rho(b\otimes \id_{\cH})=\rho(b\otimes P)+\rho(b\otimes 
			(\id_\cH-P)).$ 
	By $\rho$ being a $\ast$-representation on $\cD$, for every $k_1,k_2\in \cD$ the following is true
		\begin{eqnarray*}
			&&\left\langle\right. \rho(b\otimes P)k_1,
			\rho(b\otimes(\id_\cH-P))k_2\left.\right\rangle =
			\left\langle \right.
			\rho(b\otimes(\id_\cH-P))^\ast\rho(b\otimes P)k_1,
			k_2\left.\right\rangle\\
			&=&	\left\langle\right. \rho(b^\ast b\otimes(\id_\cH-P)P)k_1,k_2 \left.\right\rangle
			=	\left\langle \right.\rho(0)k_1,k_2 \left.\right\rangle=0.
		\end{eqnarray*}
	Hence, 
		$\rho(b\otimes P)\cD \perp \rho(b\otimes (\id_\cH-P))\cD.$
	Thus, the inclusion $\cD(\overline{\rho(b\otimes \id_{\cH})})\subseteq 
	 \cD(\overline{\rho(b\otimes P)})$ holds.
	
	Now we will prove the inclusion 
	$\cD(\overline{\rho(b\otimes \id_{\cH})})\subseteq 
	 \cD(\overline{\rho(b\otimes A)})$ of domains for an arbitrary operator $A\in \cW$.
	Let $k\in \cD(\overline{\rho(b\otimes \id_{\cH})})$. Let $k_i$ be the sequence from $\cD$,
	such that $k_i\to k$ and $\rho(b\otimes \id_{\cH})k_i\to \rho(b\otimes \id_{\cH})k$. For the inclusion
	$k\in \cD(\overline{\rho(b\otimes A)})$, the sequence
	$\rho(b\otimes A)k_i$ has to be convergent. We have
		$$\|\rho(b\otimes A)k_i-\rho(b\otimes A)k_j\|=
			\left\langle\right. \rho(b^\ast b\otimes A^\ast A)(k_i-k_j),k_i-k_j\left.\right\rangle$$
	Let $S_\ell(A^\ast A)=\sum_{k=1}^{n_\ell} \zeta_{k,\ell} R_{k,\ell}$ 
	be a limiting sequence of $A^\ast A$. It can be chosen to be increasing.
	Then also the sequence 
		$\left\langle \rho(b^\ast b\otimes S_\ell(A^\ast A))(k_i-k_j),k_i-k_j\right\rangle$
	is increasing and converges to $\left\langle \rho(b^\ast b\otimes A^\ast A)(k_i-k_j),k_i-k_j\right\rangle$, by Proposition \ref{omejenost-funkcionalov}.
	For every $\epsilon>0$ we can choose $N\in \NN$, such that for $i,j>N$
		$\left\langle \right.\rho(b^\ast b\otimes \id_{\cH})(k_i-k_j),k_i-k_j\left.\right\rangle<\frac{\epsilon}{\|
			A^\ast A\|}.$
	Hence, for every $\ell\in \NN$ we have
	\begin{eqnarray*}
		\left\langle \rho(b^\ast b\otimes S_\ell(A^\ast A))(k_i-k_j),k_i-k_j
			\right\rangle
		&=& \sum_{k=1}^{n_\ell}\zeta_{k,\ell}
			\left\langle \rho(b^\ast b\otimes R_{k,\ell})(k_i-k_j),k_i-k_j\right\rangle\\
		&\leq& \|A^\ast A\|
			\left\langle \rho(b^\ast b\otimes \id_{\cH})(k_i-k_j),k_i-k_j\right\rangle\\
		&\leq& \epsilon.
	\end{eqnarray*}
	Hence, $\rho(b\otimes A)k_i$ is convergent and so $k\in \cD(\overline{\rho(b\otimes A)})$.
\end{proof}

\begin{proof}[Proof of Theorem \ref{theorem-D}]
	Direction $(2)\Rightarrow (1)$ follows by Propositions \ref{integrabilnost} and \ref{gostost-D-ja}.
	
	Direction $(1)\Rightarrow (2).$ By the assumption, the maps
		$\rho_P$ are integrable $\ast$-rep\-re\-sen\-ta\-tions 
	for every $P\in \cW_p$.	
	By Theorem \ref{theorem-C'}, there exist unique regular spectral measures
	$E_P:\B(\widehat{B})\to B(\cK)$
	such that $\overline{\rho_P(b)}x=\int_{\widehat{B}} f_b(\chi)\; dE_P(\chi)\;x$ holds for every 
	$x\in \cD(\overline{\rho_P(b)})$, $b\in \cB$,
	$P\in \cW_p$ and the support $\supp((E_{\id_{\cH}})_{x,x})$ is compact for 
	every $k\in \cup_{K\in \scrK}\cD_{\alpha_K, \rho_{\id_{\cH}}}$. 
	The idea is to show that the family
	$\{E_P\}_{P\in \cW_p}$ satisfies the conditions of Theorem \ref{karakterizacija-nenegativnih-spektralnih-mer} to obtain a non-negative spectral measure $M$ representing $\rho$.
	
	By an analoguous proof as in the proof
	of Theorem \ref{theorem-C'}, we prove the containment 
	$\supp((E_P)_{k,k})\subseteq \supp((E_{\id_{\cH}})_{k,k})$ for every $P
	\in \cW_p$ and every 
	$k\in\cup_{K\in \scrK}\cD_{\alpha_K, \rho_{\id_{\cH}}}$.

	It remains to check first that the family $\{E_P\}_{P\in W_p}$ satisfies the conditions
of Theorem \ref{karakterizacija-nenegativnih-spektralnih-mer}, second that $M$ 
is a representing measure of $\overline{\rho}$ and finally that $M$
is unique, regular and normalized. Since the support of every $E_P$ is contained in
the compact set $K_k = \supp(E_{\id_\cH})_{k,k}$ for every $k\in \cup_{K\in \scrK}\cD_{\alpha_K, \rho_{\id_{\cH}}}$, the proofs of the conditions of Theorem \ref{karakterizacija-nenegativnih-spektralnih-mer} are the same as the proofs of the corresponding conditions of \cite[Theorem 9.1]{Zalar2014},
just that we replace the use of \cite[Lemma 2.3]{Zalar2014} by
Lemma \ref{gostost-krogle} above. By the polarization argument and by the density of
$\cup_{K\in \scrK}\cD_{\alpha_K, \rho_{\id_{\cH}}}$, they are true for every $k\in \cK$.

	\textbf{$M$ is the representing measure of $\rho$:}
	We have 
		\begin{eqnarray*}
			\overline{\rho(\sum_{i=1}^n b_i\otimes A_i)} &=&
									\rho(\sum_{i=1}^n b_i\otimes A_i)^{\ast\ast}=
									(\sum_{i=1}^n \rho(b_i\otimes A_i))^{\ast\ast}\\
			&\supseteq&	\sum_{i=1}^n \rho(b_i\otimes A_i)^{\ast\ast}=
									\sum_{i=1}^n \overline{\rho(b_i\otimes A_i)}.			
		\end{eqnarray*}
	Hence,
		$\overline{\rho(\sum_{i=1}^n b_i\otimes A_i)}=
			\overline{\sum_{i=1}^n \overline{\rho(b_i\otimes A_i)}}.$
	Therefore, it suffices to prove that
	$\overline{\rho(b\otimes A)}=\int_X (f_b\otimes A)\; dM$.
	For a hermitian projection $A\in \cW_p$, this is true by the construction of measures $M_A$.
	Let now $A\in \cW_+$ be a positive operator. By Proposition \ref{omejenost-funkcionalov},
	$(\overline{\rho_b})_k$  is bounded for every 
	$k\in \cup_{K\in \scrK}\cD_{\alpha_K, \rho_{\id_{\cH}}}.$
	Let $K_0=\supp((E_{\id_{\cH}})_{k,k})$.
	Therefore,
		\begin{eqnarray*}
			&	&	(\overline{\rho_b})_k(A)
			 = \left\langle\right. \overline{\rho(b\otimes A)}k,k \left.\right\rangle
			 =	\lim_\ell \left\langle\right. \overline{\rho(b\otimes S_\ell(A))}k,k \left.\right\rangle\\
			&=&	\lim_\ell \left\langle\right. (\int_{X} (f_b\otimes S_\ell(A)) \; dM)\; 
					k, k\left.\right\rangle
			 =	\lim_\ell \left\langle\right. (\int_{K_0} 
					(f_b\otimes S_\ell(A)) \; dM)\; k, k\left.\right\rangle\\
			&=&	\left\langle\right. (\int_{K_0} 
					(f_b\otimes A) \; dM)\; k, k\left.\right\rangle
			 = \left\langle\right. (\int_{X} 
					(f_b\otimes A) \; dM)\; k, k\left.\right\rangle
		\end{eqnarray*}
	By the polarization argument, 
		$\left\langle \overline{\rho(b\otimes A)}k_1,k_2 \right\rangle=
		\left\langle (\int_{X} (f_b\otimes A) \; dM)\; k_1, k_2\right\rangle$
	for every $k_1, k_2 \in \cup_{K\in \scrK}\cD_{\alpha_K, \rho_{\id_{\cH}}}.$
	By the density of $\cup_{K\in \scrK}\cD_{\alpha_K, \rho_{\id_{\cH}}}$,
		$\overline{\rho(b\otimes A)}=\int_{X} (f_b\otimes A) \; dM$.
	We decompose an arbitrary $A$ into the usual linear combination of four positive parts and use the result for each of them.
	
	Finally, $M$ is unique, regular and normalized, by the uniqueness and the regularity 	
	of each $E_P$ and the unitality of $\rho$.
\end{proof}

\end{document}